\newcommand\restr[2]{{
\left.\kern-\nulldelimiterspace 
#1 
\right|_{#2} 
}}
\DeclareMathOperator*{\GS}{GS} 
\DeclareMathOperator*{\GA}{GA} 
\DeclareMathOperator*{\FGS}{FGS} 
\DeclareMathOperator*{\supp}{supp} 
\renewcommand\Im{Im} 
\renewcommand\Re{Re} 
\def\ds{\displaystyle} 
\def\R{\mathbb R} 
\def\C{\mathbb C} 
\def\N{\mathbb N}
\newcommand{\D}{\mathcal{D}} 
\newcommand{\F}{\mathcal{F}} 
\newcommand{\Sch}{\mathcal{S}}
\def\MR#1{}} 
\let\OLDthebibliography=\thebibliography 
\def\thebibliography#1{ 
\OLDthebibliography{#1} 
\addcontentsline{toc}{section}{\refname}} 
\newtheorem{theo}{Theorem}[section]
\newtheorem{prop}[theo]{Proposition} 
\newtheorem{defin}[theo]{Definition} 
\newtheorem{lema}[theo]{Lemma} 
\newtheorem{cor}[theo]{Corollary} 
\newtheorem{nota}[theo]{Remark} 
\newtheorem{exam}[theo]{Example} 
\renewcommand{\section}%
   {\setcounter{equation}{0}\@startsection {section}{1}{\z@}{-3.5ex plus -1ex
  minus -.2ex}{2.3ex plus .2ex}{\Large\bf}}
\begin{document} 

\title[Global pseudodifferential operators of infinite order...]{Global pseudodifferential operators of infinite order in classes of ultradifferentiable functions}

\author[Asensio]{Vicente Asensio}
\address{
Instituto Universitario de Matem\'atica Pura y Aplicada IUMPA\\
Universitat Po\-li\-t\`ecni\-ca de Val\`encia\\
Camino de Vera, s/n\\
E-46071 Valencia\\
Spain}
\email{viaslo@upv.es}

\author[Jornet]{David Jornet}
\address{
Instituto Universitario de Matem\'atica Pura y Aplicada IUMPA\\
Universitat Po\-li\-t\`ecni\-ca de Val\`encia\\
Camino de Vera, s/n\\
E-46071 Valencia\\
Spain}
\email{djornet@mat.upv.es}

\keywords{global classes, pseudodifferential operator, ultradistribution, non-quasianalytic.}
\subjclass[2010]{46F05, 47G30, 35S05, 46E10.}

\markboth{\sc Global pseudodifferential operators...}
 {\sc  Vicente Asensio and David Jornet}

\begin{abstract}
We develop a theory of pseudodifferential operators of infinite order for the global classes $\Sch_{\omega}$ of ultradifferentiable functions in the sense of Bj\"orck, following the previous ideas given by Prangoski for ultradifferentiable classes in the sense of Komatsu. We study the composition and the transpose of such operators with symbolic calculus and provide several examples.
\end{abstract}

\maketitle

\section{Introduction}

The \emph{local} theory of pseudodifferential operators grew out of the study of singular integral operators, and developed after 1965 with the systematic studies of Kohn-Nirenberg~\cite{KN}, H\"ormander~\cite{H}, and others. Since then, several authors have studied pseudodifferential operators of finite or infinite order in Gevrey classes in the local sense; we mention, for instance, \cite{HMM,Z}.  We refer to Rodino~\cite{R} for an excellent introduction to this topic, and the references therein.

Gevrey classes are spaces of (non-quasianalytic) ultradifferentiable functions in between real analytic  and $C^{\infty}$ functions. The study of several problems in general classes of ultradifferentiable functions has received much attention in the last 60 years. Here, we will work with ultradifferentiable functions as defined by Braun, Meise and Taylor~\cite{BMT}, which define the classes in terms of the growth of the derivatives of the functions,  or in terms of the growth of their Fourier transforms (see, for example, Komatsu~\cite{K} and Bj\"orck~\cite{bjorck1966linear}, or \cite{BMT}, for two different points of view to define spaces of ultradifferentiable functions and ultradistributions; and \cite{BMM} for a comparison between  the classes defined in \cite{BMT} and \cite{K}). 

In \cite{FGJ2005pseudo}, a full theory of pseudodifferential operators in the local sense is developed for ultradifferentiable classes of Beurling type as in \cite{BMT}, and it is proved that the corresponding operators are $\omega$-pseudo-local, and the product of two operators is given in terms of a suitable symbolic calculus. In \cite{FGJ-hypo,FGJ-wfs} the same authors construct a parametrix for such operators and study the action of the wave front set on them (see also \cite{AJO-wfs} for a different point of view). On the other hand, very recently, Prangoski~\cite{Pran2013pseudo} studies pseudodifferential operators of {\em global} type and infinite order for ultradifferentiable classes of Beurling and Roumieu type in the sense of Komatsu, and later, in \cite{CPP},  a parametrix  is constructed for such operators. See \cite{Pran2013pseudo,Nic_Rod2010global} and the references therein for more examples of pseudodifferential operators in global classes (e.g., in Gelfand-Shilov classes).

Our aim is to study pseudodifferential operators of global type and infinite order in classes of ultradifferentiable functions of Beurling type as introduced in \cite{BMT}. Hence, the right setting is the class $\Sch_{\omega}$ as introduced by Bj\"orck~\cite{bjorck1966linear}. We follow the lines of Prangoski~\cite{Pran2013pseudo} and Shubin~\cite{Shu2001pseudo}, but from the point of view of \cite{FGJ2005pseudo}, in such a way that our proofs simplify the ones of \cite{Pran2013pseudo}. Moreover, we clarify the role of some kind of entire functions \cite{Braun1993an,Lan1996conti} that become crucial throughout the text. 

The paper is organized as follows. First, in Section~\ref{SectionBackground}, we introduce our setting, we give some useful results about the class $\Sch_{\omega}$ and we recall from \cite{Braun1993an,Lan1996conti} the existence of some kind of $\omega$-ultradifferential operators very useful in the next sections. In Section~\ref{section-pseudos} we introduce our symbol (amplitude) classes and define the corresponding pseudodifferential operators. We give in Proposition~\ref{PropExampleExtensionPseudodifferentialOperator} a characterization in terms of the kernel of an $\omega$-regularizing (pseudodifferential) operator, which are very important in the construction of parametrices of hypoelliptic operators. We see in Example~\ref{examples} that many operators are pseudodifferential operators according to our definition. In particular, we show that our classes of symbols are different from the ones of \cite{Pran2013pseudo}.  In Section~\ref{section-calculus} we develop the symbolic calculus and we state some previous results needed to compose two pseudodifferential operators. In Section~\ref{section-composition}, we study the composition of two of our operators. To this aim, we analyse carefully the behaviour of the kernel of a pseudodifferential operator outside the diagonal in Theorem~\ref{TheoFormalKernel}. This result is an improvement of \cite[Theorem 6.3.3]{Nic_Rod2010global} and \cite[Proposition 5]{Pran2013pseudo}. The results that we obtain let the study of parametrices for hypoelliptic differential operators in this setting.

 
\section{Preliminaries}\label{SectionBackground} 

We begin with some notation on multi-indices. Throughout the text we will denote by $\alpha = (\alpha_1,\ldots,\alpha_d) \in \mathbb{N}_{0}^{d}$ a multi-index of dimension $d$. We denote the length of $\alpha$ by \[ |\alpha| = \alpha_1 + \ldots + \alpha_d. \] For two multi-indices $\alpha$ and $\beta$ we write $\beta \leq \alpha$ for $\beta_j \leq \alpha_j,$ when $j=1,\ldots,d$. Moreover, $\alpha! = \alpha_1! \cdots \alpha_d!$ and if $\beta \leq \alpha$, then 
\[ \binom{\alpha}{\beta} := \binom{\alpha_1}{\beta_1} \cdots \binom{\alpha_d}{\beta_d} = \frac{\alpha!}{\beta!(\alpha-\beta)!}. \] We also write 
\[ \partial^{\alpha} = \big(\frac{\partial}{\partial x_1}\big)^{\alpha_1} \cdots \big(\frac{\partial}{\partial x_d}\big)^{\alpha_d},\] and using the notation $D_{x_j} = -i\frac{\partial}{\partial x_j}$, $j=1,\ldots,d$, where $i$ is the imaginary unit, we set 
\[ D^{\alpha} = D^{\alpha_1}_{x_1} \cdots D^{\alpha_d}_{x_d}. \] For $x=(x_{1},\ldots,x_{d}) \in \mathbb{R}^{d}$, let \[ x^{\alpha} = x_1^{\alpha_1} \cdots x_d^{\alpha_d}. \] 
We denote $\langle x \rangle = (1+|x|^2)^{\frac1{2}}$ for every $x \in \mathbb{R}^{d}$, where $|x|$ is the Euclidean norm of $x$.  Our setting requires weight functions as defined by Braun, Meise and Taylor~\cite{BMT}. 

\begin{defin}\label{DefinitionWeightFunction} 
A non-quasianalytic weight function $\omega:[0,+\infty[ 
\to[0,+\infty[$ is a continuous and increasing function which satisfies: 
\begin{itemize} 
\item[($\alpha$)] 
\vspace{0.5mm} $\displaystyle\exists\ L\geq 1\ \mbox{s.t.}\ 
\omega(2t)\leq L(\omega(t)+1),\ \forall t\geq0,$ 
\item[($\beta$)] 
\vspace{0.5mm} $\displaystyle \int_{1}^{+\infty} 
\frac{\omega(t)}{t^2} dt < +\infty$, 
\item[($\gamma$)] 
\vspace{0.5mm} $\displaystyle \log(t) = o(\omega(t)) $ as $t \to 
\infty$,
\item[($\delta$)] 
\vspace{0.5mm} $\displaystyle \varphi: t \mapsto 
\omega(e^{t})\ $ is convex. 
\end{itemize} 
\end{defin}
Throughout the text, if necessary, we will denote $\varphi$ by $\varphi_{\omega}$ in some cases.

\begin{exam}{\rm The following functions are, after a
change
in some interval $[0,M]$,  examples of weight functions:
\begin{enumerate}
 \item[(i)] $\omega (t)=t^d$ for $0<d<1.$  
 
  \item[(ii)] $\omega (t)=\left(\log(1+t)\right)^s$, $s>1.$ 
  
   
    \item[(iii)] $\omega(t)=t^{d} (\log(e+t))^{s}$, $0<d<1, s\neq0.$
    \end{enumerate}
    }
\end{exam}

By definition, we extend the weight function in a radial way to $\mathbb{C}^{d}$, i.e. 
\[ \omega(\xi) = \omega(|\xi|), \ \ \xi=(\xi_1,\ldots,\xi_d) \in \mathbb{C}^{d}.\] 
We observe that there exists $L'>0$ depending on the constant $L>0$ of Definition~\ref{DefinitionWeightFunction}\,$(\alpha)$  and the dimension $d$ such that for any $x=(x_1,\ldots,x_d) \in \mathbb{C}^{d}$: 
\begin{eqnarray} \label{EqOmegax}
\omega(x) \le L' \omega(|x|_{\infty}) + L'  
\leq L'\omega(x) + L',
\end{eqnarray} where $|x|_{\infty}:=\max(|x_{1}|,\ldots,|x_{d}|)$. Moreover, as in \cite[Lemma 1.2]{BMT}, if $x,y \in \mathbb{C}^d$, then 
\begin{eqnarray} \label{EqOmegaDiff} 
\omega(x+y)  \leq L\big(\omega(x)+\omega(y)+1\big). 
\end{eqnarray} 

We will assume without loss of generality that $\omega|_{[0,1]}\equiv 0$, which gives some useful properties (see~\cite{BMT}). For instance, we have 
\begin{equation}\label{EqOmegaCorchetex} 
\omega(\langle x \rangle) = \omega\big(\sqrt{1+|x|^2}\big) \leq \omega(1+|x|) \leq L(\omega(x) + 1). 
\end{equation} 

We consider now property $(\delta)$ of Definition~\ref{DefinitionWeightFunction} and define: 
\begin{defin} 
The Young conjugate $\varphi^{\ast}:[0,\infty[ \to [0,\infty[$ of $\varphi$ is given by 
\[ \varphi^{\ast}(t) := \sup_{s \geq 0}\{st - \varphi(s) \}. \] 
\end{defin} 
Since $\omega|_{[0,1]}\equiv 0$, we have $\varphi^{\ast}(0)=0$. Moreover, $\varphi^{\ast}$ is convex, the function $\varphi^{\ast}(t)/t$ is increasing and $\varphi^{\ast \ast} = \varphi$. 

It is not difficult to prove the next two results; see, for instance, \cite[Lemma 1.4, Remark 1.7]{FGJ2005pseudo}. 
\begin{lema}\label{LemmaTechnical} 
For each $n,k \in \mathbb{N}$ and $t \geq 1$, we have 
\begin{eqnarray}\label{EqLemmaTechnical1} 
t^{k} &\leq& e^{n \varphi^{\ast}(\frac{k}{n})} e^{n\omega(t)}; \\ \label{EqLemmaTechnical2} 
\inf_{j \in \mathbb{N}_{0}} t^{-j} e^{k \varphi^{\ast}(\frac{j}{k})} &\leq& e^{-k\omega(t) + \log(t)}. 
\end{eqnarray} 
\end{lema}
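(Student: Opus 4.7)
The plan is to take logarithms on both sides of each inequality and reduce them to the Young duality between $\varphi$ and $\varphi^{\ast}$, using the relation $\varphi(s)=\omega(e^{s})$ together with the biconjugation $\varphi^{\ast\ast}=\varphi$ recalled just before the statement.

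For \eqref{EqLemmaTechnical1} I would set $s=\log t\ge 0$, which is legitimate since $t\ge 1$. Taking logarithms, the asserted inequality becomes $ks\le n\varphi^{\ast}(k/n)+n\varphi(s)$, i.e., after dividing by $n$, the classical Young inequality $(k/n)\,s\le\varphi^{\ast}(k/n)+\varphi(s)$ applied to the pair $u=k/n,\sigma=s$. This is immediate from the definition $\varphi^{\ast}(u)=\sup_{\sigma\ge 0}(u\sigma-\varphi(\sigma))$, which gives $u\sigma-\varphi(\sigma)\le\varphi^{\ast}(u)$ for every $\sigma\ge 0$.

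For \eqref{EqLemmaTechnical2} I would again set $s=\log t\ge 0$ and prove the equivalent form $\inf_{j\in\mathbb{N}_{0}}\bigl(-js+k\varphi^{\ast}(j/k)\bigr)\le -k\varphi(s)+s$. Given any $u\ge 0$ I choose the lattice point $j=\lfloor ku\rfloor\in\mathbb{N}_{0}$, so $0\le u-j/k<1/k$. Two observations then give the bound: (a) $\varphi^{\ast}$ is non-decreasing on $[0,\infty)$, since it is convex with $\varphi^{\ast}(0)=0$ (because $\varphi(0)=\omega(1)=0$) and takes only non-negative values, so that $\varphi^{\ast}(j/k)\le\varphi^{\ast}(u)$; (b) from $j>ku-1$ and $s\ge 0$ one gets $-js\le -kus+s$. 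Adding them yields $-js+k\varphi^{\ast}(j/k)\le k\bigl(-us+\varphi^{\ast}(u)\bigr)+s$, and taking the infimum over $u\ge 0$ on the right produces $-k\sup_{u\ge 0}\bigl(us-\varphi^{\ast}(u)\bigr)+s=-k\varphi^{\ast\ast}(s)+s=-k\varphi(s)+s$, exactly what is needed.

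The only delicate point, and the one to handle carefully, is the discretisation in \eqref{EqLemmaTechnical2}: continuously one already has $\inf_{u\ge 0}k\bigl(-us+\varphi^{\ast}(u)\bigr)=-k\varphi(s)$ by biconjugation, and the whole purpose of the extra summand $\log t=s$ on the right-hand side is to absorb the rounding loss $k(u-j/k)s<s$ incurred when the continuous optimum is replaced by a point of the lattice $\{j/k:j\in\mathbb{N}_{0}\}$; the monotonicity of $\varphi^{\ast}$ ensures this rounding costs nothing on the $\varphi^{\ast}$ side, so the total slack stays bounded by $s$.
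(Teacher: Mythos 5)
Your proof is correct: \eqref{EqLemmaTechnical1} is exactly Young's inequality $us\le\varphi^{\ast}(u)+\varphi(s)$ after taking logarithms, and for \eqref{EqLemmaTechnical2} the discretisation via $j=\lfloor ku\rfloor$, the monotonicity of $\varphi^{\ast}$, and the biconjugation $\varphi^{\ast\ast}=\varphi$ (all recalled in the paper just before the statement) combine precisely as you describe, with the extra $\log t$ on the right absorbing the rounding loss. The paper itself gives no proof, only a pointer to \cite[Lemma 1.4, Remark 1.7]{FGJ2005pseudo}, where the argument is the same Legendre--Young duality; your write-up is a complete, self-contained version of that standard argument.
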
 
 
\begin{prop}\label{PropTechnical} 
If a weight function $\omega$ satisfies $\omega(t)=o(t^{a})$ as $t\to+\infty$ for some constant $0<a\le 1$, for every $B>0$ and $\lambda >0$, there exists a constant $C>0$ such that 
\begin{equation*}\label{EqPropTechnical} 
B^{n} n! \leq C e^{a\lambda \varphi^{\ast}(\frac{n}{\lambda})}, \qquad  n \in \mathbb{N}_{0}. 
\end{equation*} 
\end{prop}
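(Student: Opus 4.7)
The plan is to bound $B^n n!$ from above by choosing a judicious test point inside the supremum that defines $\varphi^{\ast}(n/\lambda)$, taking advantage of the hypothesis $\omega(t)=o(t^{a})$ to absorb the resulting $\omega$-term. Specifically, by the definition of $\varphi^{\ast}$ and the substitution $s=\log u$, for every $u\geq 1$ one has
\[
\varphi^{\ast}(n/\lambda)\;\geq\;\tfrac{n}{\lambda}\log u-\varphi(\log u)\;=\;\tfrac{n}{\lambda}\log u-\omega(u),
\]
so that $e^{a\lambda\varphi^{\ast}(n/\lambda)}\geq u^{an}e^{-a\lambda\omega(u)}$ for every $u\geq 1$.

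Next I would exploit the smallness hypothesis on $\omega$. Since $\omega(t)=o(t^{a})$ and $\omega$ is continuous with $\omega|_{[0,1]}\equiv 0$, for every $\epsilon>0$ there exists $C_{\epsilon}>0$ such that $\omega(t)\leq\epsilon t^{a}+C_{\epsilon}$ for all $t\geq 0$. The key move is now to make the test point $u$ scale like $n^{1/a}$: set $u=(cn)^{1/a}$, where $c>0$ is to be chosen in terms of $B$ and $\lambda$. (For $n=0$ the statement reduces to $1\leq C$, and for $n\geq 1$ one can, if necessary, replace $c$ by $\max(c,1)$ so that $u\geq 1$; this will only affect the final constant $C$.) With this choice,
\[
u^{an}e^{-a\lambda\omega(u)}\;\geq\;(cn)^{n}\,e^{-a\lambda(\epsilon cn+C_{\epsilon})}\;=\;\bigl(c\,e^{-a\lambda\epsilon c}\bigr)^{n}n^{n}\,e^{-a\lambda C_{\epsilon}}.
\]

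Finally I would combine this with the elementary bound $n!\leq n^{n}$. If $c$ and $\epsilon$ are chosen so that $c\,e^{-a\lambda\epsilon c}\geq B$, the above inequality yields
\[
B^{n}n!\;\leq\;(Bn)^{n}\;\leq\;e^{a\lambda C_{\epsilon}}\bigl(c\,e^{-a\lambda\epsilon c}\bigr)^{n}n^{n}e^{-a\lambda C_{\epsilon}}\;\leq\;e^{a\lambda C_{\epsilon}}\,e^{a\lambda\varphi^{\ast}(n/\lambda)},
\]
so that $C:=e^{a\lambda C_{\epsilon}}$ works. A concrete admissible choice is $c=eB$ (replaced by $1$ if $eB<1$) together with any $\epsilon\leq 1/(a\lambda c)$, which gives $c\,e^{-a\lambda\epsilon c}\geq c/e\geq B$.

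The only mildly delicate step is the calibration of $c$ and $\epsilon$ in terms of $B$ and $\lambda$ so that the product $c\,e^{-a\lambda\epsilon c}$ dominates $B$; everything else is a direct consequence of the definition of $\varphi^{\ast}$ and of the $o(t^{a})$ hypothesis. The exponent $a$ enters the argument in exactly one place, namely when absorbing $a\lambda\epsilon u^{a}=a\lambda\epsilon cn$ into a linear term in $n$, which is why the constant $a$ appears naturally on the right-hand side of the conclusion.
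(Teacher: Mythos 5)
Your proof is correct: the test point $s=\log u$ with $u=(cn)^{1/a}$ in the supremum defining $\varphi^{\ast}(n/\lambda)$, combined with $\omega(t)\leq\epsilon t^{a}+C_{\epsilon}$ and $n!\leq n^{n}$, gives exactly the claimed bound, and your calibration of $c$ and $\epsilon$ is sound. The paper omits the proof entirely (deferring to \cite[Lemma 1.4, Remark 1.7]{FGJ2005pseudo}), and your argument is essentially the standard one given there, so nothing further is needed.
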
 
It is an exercise to see that:
\begin{lema}\label{LemmaCorcheteIneq} 
For every $(x,y) \in \mathbb{R}^{2d}$ we have 
\[ \langle x-y \rangle \leq \sqrt{2} \langle (x,y) \rangle.\] 
\end{lema}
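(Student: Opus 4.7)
The plan is to square both sides and reduce the claim to the elementary inequality $|x-y|^{2}\le 2(|x|^{2}+|y|^{2})$, which is a direct consequence of the parallelogram identity (or of $2ab\le a^{2}+b^{2}$ applied to $a=|x|$, $b=|y|$).

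More concretely, by the definitions, the desired bound $\langle x-y\rangle \le \sqrt{2}\langle(x,y)\rangle$ is equivalent, after squaring (both sides are non-negative), to
\[
1+|x-y|^{2} \;\le\; 2\bigl(1+|x|^{2}+|y|^{2}\bigr),
\]
where we used that $|(x,y)|^{2}=|x|^{2}+|y|^{2}$ in $\mathbb{R}^{2d}$. Since $|x-y|^{2}=|x|^{2}-2\langle x,y\rangle+|y|^{2}\le |x|^{2}+2|x|\,|y|+|y|^{2}\le 2|x|^{2}+2|y|^{2}$ (Cauchy--Schwarz together with $2|x|\,|y|\le|x|^{2}+|y|^{2}$), the right-hand side dominates the left after adding the constant $1\le 2$, and the proof is complete.

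There is no genuine obstacle here; the only care needed is to verify the equivalence $|(x,y)|^{2}=|x|^{2}+|y|^{2}$ (immediate from the Euclidean norm on $\mathbb{R}^{2d}$) and to keep the constant sharp enough ($\sqrt{2}$, and not something larger), which the above computation achieves without slack in the dominant term $|x-y|^{2}$.
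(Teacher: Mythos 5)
Your proof is correct, and since the paper leaves this lemma as an exercise (no proof is given), your squaring argument via $|x-y|^{2}\le 2|x|^{2}+2|y|^{2}$ together with $1\le 2$ is exactly the intended elementary verification. Nothing to add.
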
 


From the convexity of $\varphi^{*}$ and the fact that $\varphi^*(0)=0$ we have (see, for instance, \cite[Lemma 1.3]{FGJ2005pseudo}) 
\begin{lema}\label{lemmafistrella} 
\begin{enumerate} 
\item[(1)] Let $L>0$ be such that $\omega(e t)\le L(\omega(t)+1)$ (this is possible from Definition~\ref{DefinitionWeightFunction}($\alpha$)).  We have   
\begin{equation}\label{EqLLambdaVarphi} 
\lambda L^{n} \varphi^{\ast}\big(\frac{y}{\lambda L^{n}}\big) + ny \leq \lambda \varphi^{\ast}\big(\frac{y}{\lambda}\big) + \lambda \sum_{j=1}^{n} L^{j}, 
\end{equation} for every $y\geq 0$, $\lambda >0$ and $n \in \mathbb{N}$. 
\item[(2)] For all $s,t,\lambda>0$, we have 
\begin{equation*}
\label{EqEstimationVarphi} 
2\lambda \varphi^{\ast}\big(\frac{s+t}{2\lambda}\big) \leq \lambda\varphi^{\ast}\big(\frac{s}{\lambda}\big) + \lambda\varphi^{\ast}\big(\frac{t}{\lambda}\big) \leq \lambda\varphi^{\ast}\big(\frac{s+t}{\lambda}\big). 
\end{equation*} 
\end{enumerate} 

\end{lema}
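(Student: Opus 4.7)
The plan is to prove (2) directly from the convexity and monotonicity properties of $\varphi^{\ast}$ recorded in the excerpt, and to prove (1) by induction on $n$, with the base case reducing to property $(\alpha)$ of the weight function via the Legendre duality between $\varphi$ and $\varphi^{\ast}$.

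For (2), the left inequality is an immediate instance of the convexity of $\varphi^{\ast}$: applying $\varphi^{\ast}\bigl(\tfrac{a+b}{2}\bigr) \le \tfrac{1}{2}\bigl(\varphi^{\ast}(a)+\varphi^{\ast}(b)\bigr)$ with $a=s/\lambda$, $b=t/\lambda$ and multiplying by $2\lambda$ is all that is needed. For the right inequality I would exploit the fact, already stated in the excerpt, that $\varphi^{\ast}(r)/r$ is increasing together with $\varphi^{\ast}(0)=0$; this gives the subhomogeneity $\varphi^{\ast}(\theta r)\le \theta\varphi^{\ast}(r)$ for $\theta\in[0,1]$. Taking $r=(s+t)/\lambda$, $\theta_1=s/(s+t)$, $\theta_2=t/(s+t)$ and adding the two resulting inequalities produces
\[ \varphi^{\ast}(s/\lambda)+\varphi^{\ast}(t/\lambda) \le \varphi^{\ast}((s+t)/\lambda), \]
which, after multiplication by $\lambda$, is the desired superadditivity.

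For (1), I would induct on $n$. The heart of the matter is the base case $n=1$:
\[ \lambda L\,\varphi^{\ast}\!\left(\frac{y}{\lambda L}\right) + y \;\le\; \lambda\,\varphi^{\ast}\!\left(\frac{y}{\lambda}\right) + \lambda L. \]
My approach is to pass to the Legendre transform: since $\lambda L\,\varphi^{\ast}(y/(\lambda L)) = \sup_{s\ge 0}(sy-\lambda L\,\varphi(s))$, the substitution $t=s+1$ rewrites the left-hand side as $\sup_{t\ge 1}(ty - \lambda L\,\varphi(t-1))$. Now property $(\alpha)$ of $\omega$, $\omega(et)\le L(\omega(t)+1)$, translates via $\varphi(s)=\omega(e^s)$ into $\varphi(t)\le L\varphi(t-1)+L$, equivalently $-\lambda L\,\varphi(t-1)\le -\lambda\varphi(t)+\lambda L$. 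Substituting and enlarging the sup from $t\ge 1$ to $t\ge 0$ yields precisely the base inequality.

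The inductive step then applies the base case with $\lambda$ replaced by $\lambda L^{n}$, which gives
\[ \lambda L^{n+1}\,\varphi^{\ast}\!\left(\frac{y}{\lambda L^{n+1}}\right) + y \;\le\; \lambda L^{n}\,\varphi^{\ast}\!\left(\frac{y}{\lambda L^{n}}\right) + \lambda L^{n+1}; \]
adding $ny$ to both sides and invoking the inductive hypothesis on the right closes the induction. The main obstacle is the base case of (1): one has to identify the correct change of variables ($t=s+1$) in the sup defining $\varphi^{\ast}$ and then translate $(\alpha)$ accurately from $\omega$ to $\varphi$; everything else, including (2) and the inductive step, is routine convex-function bookkeeping.
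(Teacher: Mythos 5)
Your proof is correct and follows essentially the argument the paper delegates to \cite[Lemma 1.3]{FGJ2005pseudo}: part (2) from convexity together with $\varphi^{\ast}(0)=0$ (equivalently, the monotonicity of $\varphi^{\ast}(r)/r$), and part (1) by writing $\lambda L\varphi^{\ast}\big(\frac{y}{\lambda L}\big)+y$ as a Legendre-type supremum, translating property $(\alpha)$ into $\varphi(t)\leq L\varphi(t-1)+L$, and iterating. No gaps; the base case and the induction are handled exactly as in the standard proof.
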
 

The following lemma is taken from~\cite[Lemma 1.5\,(2)]{FGJ2005pseudo}:
\begin{lema}\label{LemmaTechnical2} 
If $\frac{k}{N} \varphi^{\ast}\big(\frac{N}{k}\big) \leq \log(t) \leq \frac{k}{N+1}\varphi^{\ast}\big(\frac{N+1}{k}\big)$, then 
\[ t^{-N} e^{2k \varphi^{\ast}\big(\frac{N}{2k}\big)} \leq e^{-k\omega(t) + \log(t)}. \] 
\end{lema}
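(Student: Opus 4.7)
Taking natural logarithms, the conclusion is equivalent to
\[
k\omega(t)+2k\varphi^{\ast}\!\Bigl(\frac{N}{2k}\Bigr)\;\le\;(N+1)\log t. \qquad(\star)
\]
My approach has two ingredients: a convexity-based bound on $2k\varphi^{\ast}(N/(2k))$, and a Legendre--Fenchel analysis of $\omega(t)$.

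\emph{Bounding $2k\varphi^{\ast}(N/(2k))$.} Apply the first inequality of Lemma~\ref{lemmafistrella}(2) with $\lambda=k$, $s=0$, $t=N$, and use $\varphi^{\ast}(0)=0$ to obtain $2k\varphi^{\ast}(N/(2k))\le k\varphi^{\ast}(N/k)$. Combined with the first hypothesis $(k/N)\varphi^{\ast}(N/k)\le\log t$, this gives the (crude) bound $2k\varphi^{\ast}(N/(2k))\le N\log t$.

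\emph{Bounding $k\omega(t)$.} Use the Legendre--Fenchel identity $\omega(t)=\varphi(\log t)=\sup_{y\ge 0}\{y\log t-\varphi^{\ast}(y)\}$, and set $g(y):=y\log t-\varphi^{\ast}(y)$. This is concave with $g(0)=0$. The two hypotheses say exactly that $g(N/k)\ge 0$ and $g((N+1)/k)\le 0$. By concavity, $g$ is non-positive beyond $(N+1)/k$, so the supremum is attained at some $y^{\ast}\in[0,(N+1)/k]$. Writing $k\omega(t)=ky^{\ast}\log t-k\varphi^{\ast}(y^{\ast})$, inequality $(\star)$ rearranges to
\[
\bigl(N+1-ky^{\ast}\bigr)\log t + k\varphi^{\ast}(y^{\ast}) - 2k\varphi^{\ast}\!\Bigl(\frac{N}{2k}\Bigr)\;\ge\;0.
\]
The first summand is non-negative since $ky^{\ast}\le N+1$. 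When $y^{\ast}\ge N/k$, monotonicity of $\varphi^{\ast}$ together with $2\varphi^{\ast}(N/(2k))\le\varphi^{\ast}(N/k)$ makes the second summand also non-negative, and we are done.

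\emph{Main obstacle.} The delicate case is $y^{\ast}<N/k$, where $\varphi^{\ast}(y^{\ast})$ can be strictly smaller than $2\varphi^{\ast}(N/(2k))$ and the second summand becomes negative. Here the naive estimate from the first part is too weak: one must exploit the convexity gap $k\varphi^{\ast}(N/k)-2k\varphi^{\ast}(N/(2k))\ge 0$ and the tangent-line estimate for the concave $g$ at $N/k$, combined with both hypotheses simultaneously, to balance this deficit against the positive contribution $(N+1-ky^{\ast})\log t$. I expect the execution of this balancing step---essentially a discrete Legendre-duality calculation tied to the sandwiching condition on $\log t$---to be the technical heart of the argument, whereas all other steps are direct applications of the lemmas already established.
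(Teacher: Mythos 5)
Your reduction to $(\star)$, the identification via concavity of $g$ of a maximizer $y^{\ast}\in[0,(N+1)/k]$ with $k\omega(t)=ky^{\ast}\log t-k\varphi^{\ast}(y^{\ast})$, and the disposal of the case $y^{\ast}\ge N/k$ are all correct. But what you submit is not yet a proof: the case $y^{\ast}<N/k$ is left as an expectation, and the tools you name for it do not assemble into an argument. Optimality of $y^{\ast}$ gives $\varphi^{\ast}(N/k)-\varphi^{\ast}(y^{\ast})\ge (N/k-y^{\ast})\log t$, which is a \emph{lower} bound on the deficit you must bound from \emph{above}; the tangent-line (convexity) upper bound is $(\varphi^{\ast})'(N/k)\,(N/k-y^{\ast})$, and that slope is not controlled by $\log t$ (it is $\ge\log t$ precisely when $y^{\ast}<N/k$). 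So the ``balancing step'' you defer is exactly the point where the plan, as described, stalls. (The paper itself gives no proof, citing \cite[Lemma~1.5\,(2)]{FGJ2005pseudo}, so there is nothing in-text to compare with.)

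The missing case closes with a different application of the convexity inequality you already invoked: in Lemma~\ref{lemmafistrella}\,(2) take $\lambda=k$ and split $N=ky^{\ast}+(N-ky^{\ast})$ (both summands nonnegative when $y^{\ast}\le N/k$) instead of $N=0+N$. This gives
\[
2k\varphi^{\ast}\Big(\frac{N}{2k}\Big)\le k\varphi^{\ast}(y^{\ast})+k\varphi^{\ast}\Big(\frac{N-ky^{\ast}}{k}\Big),
\]
and since $\varphi^{\ast}(x)/x$ is increasing and $\frac{N-ky^{\ast}}{k}\le\frac{N}{k}$,
\[
k\varphi^{\ast}\Big(\frac{N-ky^{\ast}}{k}\Big)\le\frac{N-ky^{\ast}}{N}\,k\varphi^{\ast}\Big(\frac{N}{k}\Big)\le (N-ky^{\ast})\log t,
\]
the last step by the first hypothesis. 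Hence $2k\varphi^{\ast}(N/(2k))-k\varphi^{\ast}(y^{\ast})\le(N-ky^{\ast})\log t\le(N+1-ky^{\ast})\log t$, which is exactly the inequality you needed; with this addition your argument is complete.
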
 
It is not difficult to see the following
\begin{lema}\label{Lemma158TesisDavid}
Let $0<a\le 1$ be a constant and let $\omega$ and $\sigma$ be weight functions. Then:
\begin{enumerate}
\item[(1)] If $\omega\big(t^{\frac1{a}}\big) = o\big(\sigma(t)\big)$ as $t \to \infty$, for all $\lambda,\mu>0$ there exists $C:=C_{\lambda,\mu}>0$ such that
\[ \lambda \varphi_{\sigma}^{\ast}\big(\frac{j}{\lambda}\big) \leq C + \mu a \varphi_{\omega}^{\ast}\big(\frac{j}{\mu}\big), \quad j \in \mathbb{N}_0. \]

\item[(2)] If $\omega\big(t^{\frac1{a}}\big) = O\big(\sigma(t)\big)$ as $t \to \infty$,  there is $C>0$ such that for each $\lambda>0$,
\[ \lambda \varphi_{\sigma}^{\ast}\big(\frac{j}{\lambda}\big) \leq \lambda + a\lambda  C \varphi_{\omega}^{\ast}\big(\frac{j}{\lambda C}\big), \quad j\in\N_{0}. \]
\end{enumerate}

\end{lema}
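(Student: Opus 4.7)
Both parts are standard comparisons of Young conjugates; the plan is to (i) translate the growth hypothesis on $\omega$ and $\sigma$ into a pointwise inequality between $\varphi_\omega$ and $\varphi_\sigma$, (ii) pass to Young conjugates (which reverses the inequality and flips the scaling), and (iii) select parameters so that the exponents match the right-hand sides stated in the lemma.

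\smallskip
\textbf{Step 1 (Hypothesis on the $\varphi$'s).} Substituting $t=e^u$, the condition $\omega(t^{1/a})=o(\sigma(t))$ as $t\to\infty$ reads $\varphi_\omega(u/a)=o(\varphi_\sigma(u))$ as $u\to\infty$; similarly $\omega(t^{1/a})=O(\sigma(t))$ becomes $\varphi_\omega(u/a)=O(\varphi_\sigma(u))$. Since $\varphi_\omega$ and $\varphi_\sigma$ are continuous and increasing with $\varphi_\sigma(0)=\varphi_\omega(0)=0$, I can upgrade these asymptotic statements to global inequalities by absorbing the compact part into an additive constant. Concretely, in case (1), for every $\varepsilon>0$ there is $K_\varepsilon\ge 0$ with
\[
\varphi_\sigma(u)\;\ge\;\tfrac{1}{\varepsilon}\,\varphi_\omega(u/a)-\tfrac{K_\varepsilon}{\varepsilon},\qquad u\ge 0,
\]
and in case (2) there are fixed $M,K\ge 0$ with $\varphi_\sigma(u)\ge \tfrac{1}{M}\varphi_\omega(u/a)-\tfrac{K}{M}$ for all $u\ge 0$.

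\smallskip
\textbf{Step 2 (Conjugate the inequality).} I will use two elementary facts about the Young conjugate $f^*(s)=\sup_{u\ge 0}(su-f(u))$: if $f\ge g-K$ then $f^*\le g^*+K$; and if one defines $g_a(u):=g(u/a)$, the change of variable $v=u/a$ gives $g_a^*(s)=g^*(as)$ (one uses here that $a>0$). Applying both facts to the inequalities of Step~1 yields
\[
\varphi_\sigma^{\ast}(s)\;\le\;\tfrac{K_\varepsilon}{\varepsilon}+\tfrac{1}{\varepsilon}\,\varphi_\omega^{\ast}(a\varepsilon s)\quad\text{(case 1),}\qquad
\varphi_\sigma^{\ast}(s)\;\le\;\tfrac{K}{M}+\tfrac{1}{M}\,\varphi_\omega^{\ast}(aMs)\quad\text{(case 2).}
\]

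\smallskip
\textbf{Step 3 (Match parameters).} For case (1), given $\lambda,\mu>0$ I set $s=j/\lambda$ and choose $\varepsilon=\lambda/(a\mu)$, so that $a\varepsilon s=j/\mu$ and $\lambda/\varepsilon=a\mu$; multiplying the first inequality by $\lambda$ gives
\[
\lambda\varphi_\sigma^{\ast}\!\bigl(j/\lambda\bigr)\;\le\;\tfrac{\lambda K_\varepsilon}{\varepsilon}+a\mu\,\varphi_\omega^{\ast}\!\bigl(j/\mu\bigr),
\]
and the constant $C_{\lambda,\mu}:=\lambda K_\varepsilon/\varepsilon$ is exactly what the statement permits (it depends on $\lambda$ and $\mu$ but not on $j$). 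For case (2), I set $C:=1/(aM)$ so that $aC=1/M$ and $j/(\lambda C)=aMj/\lambda$; multiplying by $\lambda$ again produces
\[
\lambda\varphi_\sigma^{\ast}\!\bigl(j/\lambda\bigr)\;\le\;\tfrac{\lambda K}{M}+a\lambda C\,\varphi_\omega^{\ast}\!\bigl(j/(\lambda C)\bigr),
\]
and after possibly enlarging $C$ at the very beginning (which is equivalent to weakening $M$ in the $O$-assumption, and only makes the right-hand side larger) the residual constant $K/M$ can be forced to be at most $1$, giving the claimed $\lambda+a\lambda C\,\varphi_\omega^{\ast}(j/(\lambda C))$.

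\smallskip
The bookkeeping is entirely mechanical; the only point where I expect some friction is the clean way to handle the residual additive constants of Step~1, in particular making sure in case (2) that the factor $K/M$ on the leading $\lambda$ can be absorbed into the single universal constant $C$ required by the statement (independent of $\lambda$). Everything else reduces to choosing $\varepsilon$ or $C$ so that the scaling parameters inside $\varphi_\omega^{\ast}$ line up as prescribed.
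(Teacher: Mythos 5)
Your argument is correct, and since the paper states this lemma with only the remark ``It is not difficult to see the following'' and no proof, what you have written supplies the missing (standard) argument rather than an alternative to one. Steps 1 and 2 are exactly right: the substitution $t=e^{u}$ turns the hypothesis into $\varphi_{\omega}(u/a)=o(\varphi_{\sigma}(u))$ (resp.\ $O$), the compact range of $u$ is absorbed into an additive constant using $\varphi_{\sigma}\ge 0$, and the conjugation rules ($f\ge g-K$ implies $f^{\ast}\le g^{\ast}+K$, $(g(\cdot/a))^{\ast}(s)=g^{\ast}(as)$, $(cg)^{\ast}(s)=c\,g^{\ast}(s/c)$) give the displayed bounds; the parameter matching in Step 3 is also correct. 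The one slip is the final parenthetical in case (2): since $t\mapsto\varphi_{\omega}^{\ast}(t)/t$ is increasing, the map $C\mapsto C\,\varphi_{\omega}^{\ast}(y/C)$ is \emph{decreasing}, so it is \emph{shrinking} $C$ (equivalently, enlarging $M$, which is the genuinely weaker form of the $O$-hypothesis) that enlarges the right-hand side; enlarging $C$ does the opposite of what you claim. The correct repair is to replace $M$ by $\max\{M,K\}$ before setting $C:=1/(aM)$: the inequality $\varphi_{\omega}(u/a)\le M\varphi_{\sigma}(u)+K$ survives this enlargement because $\varphi_{\sigma}\ge 0$, and then $K/M\le 1$, so the residual term is at most $\lambda$ as required, with $C$ independent of $\lambda$. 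With that one-line correction the proof is complete.
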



We consider also  the {\em Fourier transform} of $u
\in L^1(\R^d)$ denoted by
$$
\F(u)(\xi)=\hat{u}(\xi):=\int_{\R^d}u(x)e^{-i\langle x,\xi\rangle}dx,\qquad\xi\in\R^d,
$$
with standard extensions to more general spaces of functions and distributions. We will work in the global spaces of ultradifferentiable functions and ultradistributions as defined by Bj\"orck~\cite{bjorck1966linear}: 
\begin{defin} 
\label{def3} 
For a weight $\omega$ as in Definition \ref{DefinitionWeightFunction} we define 
$\Sch_\omega(\R^{d})$ as 
the set of all $u\in L^1(\R^d)$ such that $u$ and its Fourier transform $\widehat{u}$ belong to $C^\infty(\R^d)$ and 
\begin{itemize} 
\item[(i)] for each $\lambda>0$ and $\alpha\in\N^d_0,\quad\ds 
\sup_{x\in\R^d}e^{\lambda\omega(x)}|D^\alpha u(x)|<+\infty,$ 
\item[(ii)] for each 
$\lambda>0$ and $\alpha\in\N^d_0,\quad\ds 
\sup_{\xi\in\R^d}e^{\lambda\omega(\xi)}|D^\alpha\widehat{u}(\xi)|<+\infty.$ 
\end{itemize} 
As usual, the corresponding dual space is denoted by 
$\Sch'_\omega(\R^{d})$ and is the set of all the linear and continuous 
functionals $u:\Sch_\omega(\R^{d})\to\C$. We say that an element of 
$\Sch'_\omega(\R^{d})$ is an {\em $\omega$-temperate ultradistribution.} 
\end{defin} 
Now, we give a useful characterization of $\Sch_\omega(\R^{d})$. See \cite{BJO2017Regu} for an exhaustive  characterization of the space $\Sch_\omega(\R^{d})$ in terms of seminorms. 

\begin{lema}\label{LemmaEquivalence} 
If $f \in \mathcal{S}(\R^{d})$, then $f \in \Sch_\omega(\R^{d})$ if and only if for every $\lambda,\mu>0$ there is $D_{\lambda,\mu}>0$ such that for all $\alpha \in \mathbb{N}_{0}^{d}$ and $x\in\R^d,$ we have 
\begin{equation}\label{EqDalpha} 
|D^{\alpha} f(x)| \leq D_{\lambda,\mu} e^{\lambda \varphi^{\ast}\big( \frac{|\alpha|}{\lambda} \big) } e^{-\mu \omega(x)}. 
\end{equation} 
\end{lema}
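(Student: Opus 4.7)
The plan is to prove both directions of the biconditional separately, exploiting the Fourier symmetry of $\Sch_{\omega}$ and the two estimates in Lemma~\ref{LemmaTechnical}.

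For the sufficiency direction ($\Leftarrow$), I would first observe that condition (i) of Definition~\ref{def3} is immediate: fixing $\lambda = 1$ in the hypothesis makes $e^{\varphi^{\ast}(|\alpha|)}$ a constant depending only on $\alpha$, and the factor $e^{-\mu\omega(x)}$ gives precisely the required weighted bound. For condition (ii), the key identity is $\xi^{\alpha}\hat f(\xi) = \F(D^{\alpha}f)(\xi)$, so integrating the joint bound over $x$ and using property ($\gamma$) to make $\int e^{-\mu\omega(x)}\,dx$ convergent for $\mu$ large yields $|\xi^{\alpha}\hat f(\xi)| \leq C_{\lambda} e^{\lambda\varphi^{\ast}(|\alpha|/\lambda)}$. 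Specializing $\alpha = j e_{k}$ for the coordinate $k$ realizing $|\xi|_{\infty} = |\xi_{k}|$ and taking the infimum over $j \in \N_{0}$ via Lemma~\ref{LemmaTechnical}(\ref{EqLemmaTechnical2}) converts this into $\omega$-decay for $\hat f$, after using (\ref{EqOmegax}) to pass from $|\xi|_{\infty}$ to $|\xi|$ and property ($\gamma$) to absorb the residual $\log|\xi|$. For the higher-order case in (ii), the same strategy applied to $\xi^{\beta}D^{\alpha}\hat f(\xi) = (-1)^{|\beta|+|\alpha|}\F(D^{\beta}(x^{\alpha}f))(\xi)$ together with a Leibniz expansion (a finite sum for each fixed $\alpha$) works, polynomial factors in $|\beta|$ being absorbed into $e^{\lambda\varphi^{\ast}(|\beta|/\lambda)}$ by Lemma~\ref{lemmafistrella}.

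For the necessity direction ($\Rightarrow$), I would start with Fourier inversion $D^{\alpha}f(x) = (2\pi)^{-d}\int \xi^{\alpha}\hat f(\xi) e^{ix\xi}\,d\xi$, apply (\ref{EqLemmaTechnical1}) to write $|\xi|^{|\alpha|} \leq e^{\lambda\varphi^{\ast}(|\alpha|/\lambda)} e^{\lambda\omega(\xi)}$, and exploit the rapid decay $|\hat f(\xi)| \leq C_{\lambda'} e^{-\lambda'\omega(\xi)}$ from (ii) with $\alpha = 0$; choosing $\lambda' > \lambda$ large enough that $\int e^{-(\lambda'-\lambda)\omega(\xi)}d\xi$ converges yields the $\alpha$-uniform estimate $|D^{\alpha}f(x)| \leq C e^{\lambda\varphi^{\ast}(|\alpha|/\lambda)}$ independent of $x$. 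To insert the $e^{-\mu\omega(x)}$ factor, I would integrate by parts to write $x^{\beta}D^{\alpha}f(x) = (-1)^{|\beta|}(2\pi)^{-d}\int e^{ix\xi}D^{\beta}_{\xi}(\xi^{\alpha}\hat f(\xi))\,d\xi$, expand with Leibniz, and combine the symmetric uniform bound $|D^{\gamma}\hat f(\xi)| \leq C e^{\lambda''\varphi^{\ast}(|\gamma|/\lambda'')}$ (obtained by running the preceding Fourier argument with roles of $f$ and $\hat f$ exchanged, using (i) for $f$ and the identity $D^{\gamma}\hat f = (-1)^{|\gamma|}\F(x^{\gamma}f)$) with pointwise $\omega$-decay of $\hat f$ to bound $|x_{k}|^{j}|D^{\alpha}f(x)| \leq C e^{\lambda\varphi^{\ast}(|\alpha|/\lambda)} e^{\mu\varphi^{\ast}(j/\mu)}$ for each coordinate $k$. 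Optimizing over $j$ via Lemma~\ref{LemmaTechnical}(\ref{EqLemmaTechnical2}) applied to $|x|_{\infty}$, together with (\ref{EqOmegax}) and property ($\gamma$), then converts these moment bounds into the desired exponential decay $e^{-\mu\omega(x)}$.

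The main obstacle is the necessity direction, where one must extract a \emph{joint} estimate — simultaneous $\alpha$-uniform control and $\omega(x)$-decay — from the Björck seminorms, which control each type of growth only separately. The crucial structural input is the Fourier symmetry of $\Sch_{\omega}$: the symmetric uniform bound for $D^{\gamma}\hat f$ derived from condition (i) on $f$ is what breaks the circularity of needing a $\gamma$-uniform estimate on $D^{\gamma}\hat f$ inside the Leibniz expansion. The remaining bookkeeping — absorbing combinatorial factors $\binom{\beta}{\gamma}\alpha!/(\alpha-\gamma)!$ and polynomial powers of $|\alpha|$ into $e^{\lambda\varphi^{\ast}(|\alpha|/\lambda)}$ by slight adjustment of $\lambda$ (using the monotonicity of $\lambda \mapsto \lambda\varphi^{\ast}(y/\lambda)$ together with Lemma~\ref{lemmafistrella}(1)) — is routine but requires care.
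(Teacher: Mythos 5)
Your sufficiency direction is sound (verifying (i) and (ii) of Definition~\ref{def3} directly is a legitimate alternative to the paper's route back through the moment seminorms), and your final step in the necessity direction --- reducing to powers of the single coordinate realizing $|x|_{\infty}$ and optimizing over $j$ via \eqref{EqLemmaTechnical2} --- is exactly what the paper does. The gap is in how you reach the moment bound $|x^{\beta}D^{\alpha}f(x)| \leq C\,e^{\lambda\varphi^{\ast}(|\alpha|/\lambda)}e^{\mu\varphi^{\ast}(|\beta|/\mu)}$ in the first place. After Fourier inversion and the Leibniz expansion of $D^{\beta}_{\xi}(\xi^{\alpha}\hat f)$ you must estimate $\int |\xi^{\alpha-\gamma}|\,|D^{\beta-\gamma}\hat f(\xi)|\,d\xi$, and for that you need a bound on $D^{\delta}\hat f(\xi)$ which \emph{simultaneously} has a constant of size $e^{\mu\varphi^{\ast}(|\delta|/\mu)}$, uniformly in $\delta$, and decays in $\xi$ fast enough to absorb $|\xi|^{|\alpha|}\leq e^{\lambda\varphi^{\ast}(|\alpha|/\lambda)}e^{\lambda\omega(\xi)}$ and keep the integral convergent. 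Neither of your two inputs supplies this: the ``symmetric uniform bound'' $|D^{\gamma}\hat f|\leq Ce^{\lambda''\varphi^{\ast}(|\gamma|/\lambda'')}$ carries no decay in $\xi$, while Definition~\ref{def3}(ii) gives decay of each $D^{\gamma}\hat f$ only with a constant depending on $\gamma$ in an uncontrolled way, and no product or geometric mean of the two removes that dependence. The joint bound you need for $\hat f$ is precisely the conclusion of the lemma applied to $\hat f$ (conditions (i) and (ii) are symmetric under the Fourier transform), so the argument is circular exactly at the point you yourself flag as the main obstacle; asserting that one ``combines'' the two separate estimates does not break the circle.

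What is actually required there is a genuine interpolation mechanism --- for instance a Gorny/Landau--Kolmogorov type inequality on unit balls, interpolating between $\sup_{B(\xi_{0},1)}|\hat f|\lesssim e^{-\mu\omega(\xi_{0})}$ and the uniform bound on higher derivatives, or the short-time Fourier transform characterization. This is the content of \cite[Theorem 4.8]{BJO2017Regu}, which the paper invokes to obtain \eqref{EqXbetaDalpha} in one stroke; the only remaining work in the paper's proof of the necessity is the coordinate reduction and the infimum over $j=|\beta|$, which you do have. Either cite that theorem as the paper does, or supply the interpolation argument explicitly; as written, the necessity direction is not proved.
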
 
\begin{proof} 
If $f \in \Sch_\omega(\R^{d})$, by \cite[Theorem 4.8]{BJO2017Regu} we have that for all $\lambda,\mu >0$ there exists $C_{\lambda,\mu}>0$ such that 
\begin{equation}\label{EqXbetaDalpha} 
\sup_{x \in \mathbb{R}^{d}} |x^{\beta} D^{\alpha} f(x)| \leq C_{\lambda,\mu} e^{\lambda \varphi^{\ast}\big( \frac{|\alpha|}{\lambda} \big)} e^{\mu \varphi^{\ast} \big(\frac{|\beta|}{\mu}\big)}, \qquad  \alpha,\beta \in \mathbb{N}_{0}^{d}. 
\end{equation} We fix $\beta = (\beta_1, \ldots, \beta_d) \in \mathbb{N}_{0}^{d}$ and $x=(x_1,\ldots,x_d) \in \mathbb{R}^{d}$. Assume w.l.o.g. that $|x_1|=|x|_{\infty}\ge 1$. We have
%
%
\[ 
|x^{\beta}D^{\alpha}f(x)| = |x_1|^{\beta_1} \ldots |x_d|^{\beta_d} |D^{\alpha}f(x)| \leq |x_1|^{\beta_1+\ldots+\beta_d} |D^{\alpha} f(x)| = |x^{\widetilde{\beta}} D^{\alpha}f(x)|, 
\] 
where $\widetilde{\beta} = \big( \beta_1 + \ldots + \beta_d, 0, \ldots, 0\big) \in \mathbb{N}_{0}^{d}$ and, obviously, $|\widetilde{\beta}|=|\beta|$. We apply our hypothesis~\eqref{EqXbetaDalpha} to $\alpha$ and $\widetilde{\beta}$ to obtain 
\begin{equation}\label{EqLemmaEquivalence} 
|x_1|^{\beta_1+\ldots+\beta_d} |D^{\alpha}f(x)| = |x^{\widetilde{\beta}} D^{\alpha}f(x)| \leq C_{\lambda,\mu} e^{\lambda \varphi^{\ast}\big(\frac{|\alpha|}{\lambda}\big)}e^{(\mu L'+1) \varphi^{\ast}\big(\frac{|\beta|}{\mu L'+1}\big)} 
\end{equation} 
for a positive constant $C_{\lambda,\mu}$ where $L'>0$ is the constant of \eqref{EqOmegax}. Now, we put $j:=\beta_1+\ldots+\beta_d =|\beta|\in \mathbb{N}_{0}$ in formula~\eqref{EqLemmaEquivalence} to obtain, by~\eqref{EqLemmaTechnical2} and \eqref{EqOmegax}, 
\begin{eqnarray} 
\nonumber |D^{\alpha} f(x)| &\leq& C_{\lambda,\mu} e^{\lambda \varphi^{\ast}\big(\frac{|\alpha|}{\lambda}\big)} \Big( \inf_{j \in \mathbb{N}_{0}} |x_1|^{-j} e^{(\mu L'+1) \varphi^{\ast}\big(\frac{j}{\mu L'+1}\big)}\Big)\\ \label{EqLemmaIneqDalpha} 
&\leq&\nonumber C_{\lambda,\mu} e^{\lambda \varphi^{\ast}\big(\frac{|\alpha|}{\lambda}\big)} e^{-(\mu L'+1) \omega(|x_1|) + \log|x_1|}\\&\le& C'_{\lambda,\mu} e^{\lambda \varphi^{\ast}\big(\frac{|\alpha|}{\lambda}\big)} e^{-\mu L' \omega(|x_1|)}\le e^{\mu L'}C'_{\lambda,\mu} e^{\lambda \varphi^{\ast}\big(\frac{|\alpha|}{\lambda}\big)} e^{-\mu \omega(x)},
\end{eqnarray} 
for some new constant $C'_{\lambda,\mu}>0$. 

Conversely, by~\eqref{EqLemmaTechnical1}, for $|x|\ge 1$ and any $\mu>0,$ we have $|x^{\beta}| \leq |x|^{|\beta|}  \leq e^{\mu \varphi^{\ast}\big(\frac{|\beta|}{\mu}\big)} e^{\mu \omega(x)}.$ Thus, by our hypothesis~\eqref{EqDalpha}, for each $\alpha, \beta\in\N_{0}^{d}$ and $x\in\R^{d}$, we get
\begin{eqnarray*} 
|x^{\beta} D^{\alpha}f(x)| \le |x|^{|\beta|} |D^{\alpha}f(x)| \leq D_{\lambda,\mu} e^{\lambda \varphi^{\ast}\big(\frac{|\alpha|}{\lambda}\big)} e^{\mu \varphi^{\ast}\big(\frac{|\beta|}{\mu}\big)}, 
\end{eqnarray*} 
which concludes the proof. 
\end{proof} 

\begin{nota}\label{Notaflambda} 
For $\lambda>0$, we denote for $f \in \Sch_\omega(\R^{d})$, \[ |f|_{\lambda} := \sup_{\alpha \in \mathbb{N}_{0}^{d}} \sup_{x \in \mathbb{R}^{d}}  |D^{\alpha} f(x)| e^{-\lambda \varphi^{\ast} \big(\frac{|\alpha|}{\lambda} \big)} e^{\lambda \omega(x)}\ , \] which is a seminorm. Observe that for any $x \in \mathbb{R}^{d}, \lambda>0$ and $\alpha \in \mathbb{N}_{0}^{d}$, we have 
\begin{eqnarray*} 
|D^{\alpha}f(x)| &=& |D^{\alpha} f(x)| e^{-\lambda \varphi^{\ast} \big(\frac{|\alpha|}{\lambda} \big)} e^{\lambda \omega(x)} e^{\lambda \varphi^{\ast} \big(\frac{|\alpha|}{\lambda} \big)} e^{-\lambda \omega(x)} \\ 
&\leq& |f|_{\lambda} e^{\lambda \varphi^{\ast} \big(\frac{|\alpha|}{\lambda} \big)} e^{-\lambda \omega(x)}. 
\end{eqnarray*} 
By Lemma~\ref{LemmaEquivalence}, $\big\{| \cdot |_{\lambda}\big\}_{\lambda > 0}$ is a fundamental system of seminorms in the class $\Sch_\omega(\R^{d})$. 
\end{nota} 

We write $P(\xi,r)$ for the polydisc of center $\xi = (\xi_1,\ldots,\xi_d) \in \mathbb{C}^d$ and polyradius $r=(r_1,\ldots,r_d)$, where each $r_j$ is positive, $j=1,\ldots,d$. That is, 
\[ P(\xi,r) = D(\xi_1,r_1) \times \ldots \times D(\xi_d,r_d) = \big\{ \zeta=(\zeta_1,\ldots,\zeta_d) \in \mathbb{C}^{d}: |\zeta_j - \xi_j| < r_j, \quad 1\leq j \leq d \big\}.\] 
And, also, 
\[ \partial P(\xi,r) := \{z \in \mathbb{C}^{d}:\, |z_j-\xi_{j}|=r_j,\,j=1,\ldots,d \}. \] 

Let us recall the following results on several complex variables. 

\begin{theo}[Cauchy's integral formula for the derivatives]\label{TheoCauchyIntegralFormula} 
Let $\Omega \subset \mathbb{C}^d$ be an open set, $a \in \Omega$ and $r=(r_1,\ldots,r_d) \in \mathbb{R}^{d}, r_j>0$ for every $j=1,\ldots,d$ so that $\overline{P(a,r)} \subset \Omega$. Let $f:\Omega \to \mathbb{C}$ be continuous and partially holomorphic. Then for all $\alpha \in \mathbb{N}_{0}^{d}$ and all $z \in P(a,r)$: 
\[ D^{\alpha}f(z) = \frac{\alpha!}{(2\pi i)^{d}} \int_{\partial P(a,r)} \frac{f(\xi)}{(\xi-z)^{\alpha+(1,\ldots,1)}} d\xi. \] 
\end{theo}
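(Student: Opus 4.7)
My plan is to reduce the $d$-dimensional statement to the one-variable Cauchy integral formula by iterating it in each coordinate. First, I would establish the case $\alpha=0$: fix $z=(z_1,\ldots,z_d)\in P(a,r)$, and use that $f$ is continuous on $\overline{P(a,r)}$ and holomorphic in each variable separately. Applying the classical one-variable formula to $\zeta_1\mapsto f(\zeta_1,z_2,\ldots,z_d)$ on the disc $D(a_1,r_1)$ yields
$$f(z) = \frac{1}{2\pi i}\int_{|\zeta_1-a_1|=r_1}\frac{f(\zeta_1,z_2,\ldots,z_d)}{\zeta_1-z_1}\,d\zeta_1.$$
Inside the integrand, $\zeta_2\mapsto f(\zeta_1,\zeta_2,z_3,\ldots,z_d)$ is again holomorphic, so I can insert another one-variable formula, and iterate down to $\zeta_d$. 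Fubini's theorem applied to the resulting continuous function on the compact distinguished boundary $\partial P(a,r)$ then combines these $d$ iterated contours into a single integral
$$f(z) = \frac{1}{(2\pi i)^d}\int_{\partial P(a,r)}\frac{f(\zeta)}{(\zeta_1-z_1)\cdots(\zeta_d-z_d)}\,d\zeta.$$

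For the general multi-index $\alpha$, I would differentiate under the integral sign. For $z$ in a compact subset $K\Subset P(a,r)$, one has $|\zeta_j-z_j|\ge r_j-\sup_{w\in K}|w_j-a_j|>0$ uniformly for $\zeta\in\partial P(a,r)$, so the kernel $1/\prod_j(\zeta_j-z_j)$ is $C^\infty$ (indeed holomorphic) in $z$, with all partial derivatives bounded on $K\times\partial P(a,r)$. Since $f$ is continuous on the compact torus $\partial P(a,r)$, Leibniz's rule for differentiation under the integral applies. Taking $\alpha_j$ derivatives in $z_j$ converts $1/(\zeta_j-z_j)$ into $\alpha_j!/(\zeta_j-z_j)^{\alpha_j+1}$; multiplying over $j$ produces the claimed $\alpha!$ and the exponent $\alpha+(1,\ldots,1)$.

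The only point requiring care is the Fubini step passing from $d$ iterated one-dimensional integrals to a single integral on $\partial P(a,r)$, but this is immediate from continuity on a compact product set. Accordingly I expect no serious obstacle; this is essentially a textbook verification, and the statement is \emph{recalled}, not proved, in the paper.
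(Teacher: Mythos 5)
Your argument is correct: iterating the one-variable Cauchy formula coordinate by coordinate, combining the iterated contours via Fubini on the compact distinguished boundary, and then differentiating under the integral sign is the standard proof of this classical fact. The paper recalls Theorem~\ref{TheoCauchyIntegralFormula} without proof, so there is no argument to compare against; your verification fills in exactly the expected textbook reasoning.
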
 

\begin{prop}[Cauchy's inequalities]\label{LemmaCauchyIneq} 
Under the assumptions of Theorem~\ref{TheoCauchyIntegralFormula}, for every multi-index $\beta \in \mathbb{N}_{0}^{d}$, the following formula holds: 
\[ \big| D^{\beta}f(a) \big| \leq \sup_{\zeta \in \partial P(a,r)} \{ |f(\zeta)| \} \frac{\beta!}{r^{\beta}}. \] 
\end{prop}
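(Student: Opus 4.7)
The plan is to derive the estimate directly from the Cauchy integral formula for derivatives stated in Theorem~\ref{TheoCauchyIntegralFormula}, by parametrizing the distinguished boundary $\partial P(a,r)$ and bounding the integrand by its supremum.

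First I would apply Theorem~\ref{TheoCauchyIntegralFormula} at the center $z=a$ with multi-index $\beta$, obtaining
\[ D^{\beta}f(a) = \frac{\beta!}{(2\pi i)^{d}} \int_{\partial P(a,r)} \frac{f(\xi)}{(\xi-a)^{\beta+(1,\ldots,1)}}\, d\xi. \]
Next I would parametrize the distinguished boundary by $\xi_j = a_j + r_j e^{i\theta_j}$ with $\theta_j \in [0, 2\pi]$ and $j=1,\ldots,d$, so that $d\xi_j = i r_j e^{i\theta_j}\, d\theta_j$, and in particular $|\xi_j - a_j| = r_j$ identically on $\partial P(a,r)$.

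With this parametrization the modulus of the denominator is the constant $r^{\beta+(1,\ldots,1)} = r_1^{\beta_1+1}\cdots r_d^{\beta_d+1}$, while the Jacobian factor contributes $r_1\cdots r_d = r^{(1,\ldots,1)}$ in absolute value. Taking absolute values inside the integral and estimating $|f(\xi)|$ by $M := \sup_{\zeta \in \partial P(a,r)} |f(\zeta)|$ yields
\[ |D^{\beta}f(a)| \le \frac{\beta!}{(2\pi)^{d}} \cdot M \cdot \frac{(2\pi)^{d}\, r^{(1,\ldots,1)}}{r^{\beta+(1,\ldots,1)}} = M \cdot \frac{\beta!}{r^{\beta}}, \]
which is precisely the asserted inequality.

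There is no substantial obstacle here: once Theorem~\ref{TheoCauchyIntegralFormula} is in hand, the argument is the standard majorization of an integral by (supremum) $\times$ (length of contour). The only mild bookkeeping point is to keep track of the multi-index $\beta + (1,\ldots,1)$ in the exponent of the denominator and to check that the cancellation with the $r_1\cdots r_d$ arising from $d\xi$ leaves exactly $r^{\beta}$ in the denominator of the final bound.
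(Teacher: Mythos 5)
Your proof is correct and is the standard argument; the paper states this proposition as a recalled classical fact without proof, and your derivation matches exactly the computation the paper itself carries out later when bounding $\big|D^{\alpha}G(0)\big|$ via the Cauchy integral formula (the factor $(2\pi)^{d}r^{(1,\ldots,1)}$ from the parametrized boundary cancelling against the denominator to leave $\beta!/r^{\beta}$). Nothing is missing.
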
 

Now, we need to introduce the following space of functions (see \cite{BMT}, \cite{FGJ2005pseudo}). Let $\omega$ be a weight function. For an open set $\Omega \subset \mathbb{R}^d$, we define the space of {\em ultradifferentiable functions of Beurling type} in $\Omega$ as
\[ \mathcal{E}_{(\omega)}(\Omega) := \big\{ f \in C^{\infty}(\Omega): |f|_{K,\lambda}<\infty \ \text{for every} \ \lambda>0, \ \text{and every} \ K \subset \Omega \ \text{compact} \big\}, \] 
where 
\[ |f|_{K,\lambda}:=  \sup_{\alpha \in \mathbb{N}_0^d} \sup_{x \in K}|D^{\alpha}f(x)| e^{-\lambda \varphi^{\ast}\big(\frac{|\alpha|}{\lambda}\big)}. \] 
We endow such space with the Fr\'echet topology given by the sequence of seminorms $|f|_{K_n,n}$, where $(K_n)_{n}$ is any compact exhaustion of $\Omega$ and $n\in\N$. The strong dual of $\mathcal{E}_{(\omega)}(\Omega)$ is the space of compactly supported ultradistributions of Beurling type and is denoted by $\mathcal{E}'_{(\omega)}(\Omega)$. 

The space of {\em ultradifferentiable functions of Beurling type with compact support} in $\Omega$ is denoted by $\D_{(\omega)}(\Omega),$ and it is the space of those functions  $f\in \mathcal{E}_{(\omega)}(\Omega)$ such that its support, denoted by $\supp f$, is compact in $\Omega$. Its corresponding dual space is denoted by $\D'_{(\omega)}(\Omega)$ and it is called the space of {\em ultradistributions of Beurling type} in $\Omega$.

We also need the notion of $(\omega)$-\emph{ultradifferential operator} with constant coefficients, which  plays an important role in structure theorems for ultradistributions \cite{Braun1993an,K}.  Let $G$ be an entire function in $\mathbb{C}^d$ with $\log|G| = O(\omega)$. For $\varphi \in \mathcal{E}_{(\omega)}\big(\mathbb{R}^{d}\big)$, the map $T_{G}:\mathcal{E}_{(\omega)}\big(\mathbb{R}^{d}\big)\to\C$ given by
\[ T_{G}(\varphi) := \sum_{\alpha \in \mathbb{N}_{0}^{d}}  \frac{D^{\alpha}G(0)}{\alpha!} D^{\alpha}\varphi(0) \] 
defines an ultradistribution $T_{G} \in \mathcal{E}'_{(\omega)}\big(\mathbb{R}^{d}\big)$ with support equal to $\{0\}$. The convolution operator $G(D): \mathcal{D}'_{(\omega)}\big(\mathbb{R}^{d}\big) \to \mathcal{D}'_{(\omega)}\big(\mathbb{R}^{d}\big)$ defined by $G(D)(\mu) = T_{G} \ast \mu$ is said to be an ultradifferential operator of ($\omega$)-class. 

The following result is due to Langenbruch~\cite[Corollary 1.4]{Lan1996conti}. It shows the existence of entire functions with prescribed exponential growth (cf. \cite[Theorem 7]{Braun1993an}). 
\begin{theo}\label{TheoLangenbruch} 
Let $\omega:[0,\infty[ \to [0,\infty[$ be a continuous and increasing function satisfying the conditions $(\alpha)$,$(\gamma)$ and $(\delta)$ of Definition~\ref{DefinitionWeightFunction}. Then there exist an even function $f \in \mathcal{H}(\mathbb{C})$ and  $C_1$, $C_2$, $C_3>0$ such that 
\begin{enumerate} 
\item[i)] $\displaystyle \log{|f(z)|} \leq \omega(z) + C_1, \quad z \in \mathbb{C}$; 
\item[ii)] $\displaystyle \log{|f(z)|} \geq C_2\omega(z), \quad \text{for} \ \ z \in U:=\big\{ z \in \mathbb{C}: |\Im(z)| \leq C_3(|\Re(z)| + 1) \big\}$. 
\end{enumerate} 
\end{theo}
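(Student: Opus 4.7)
The plan is to construct $f$ explicitly as an even entire function of Hadamard type whose zeros lie on the imaginary axis (hence outside the sector $U$) and whose density is calibrated against $\omega$. The approach is classical in spirit, going back to Braun-Meise-Taylor \cite{Braun1993an}; since condition $(\beta)$ is not assumed here, no Paley-Wiener/bump function trick is available, and a canonical product construction is essentially forced.

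\textbf{Setting up the zeros and defining $f$.} Using only $(\alpha),(\gamma),(\delta)$, I would first produce a strictly increasing sequence $0 < r_1 < r_2 < \ldots \to \infty$ whose counting function $n(r) := \#\{k \geq 1 : r_k \leq r\}$ is comparable (in both directions, up to constants) to $\omega(r)/\log(1+r)$ for large $r$. By $(\alpha)$ the weight $\omega$ grows at most polynomially, so a genus $p \in \N_0$ can be chosen with $\sum_n r_n^{-2p-2} < \infty$. I then set
$$f(z) := \prod_{n=1}^{\infty} E_p\!\left(-z^2/r_n^2\right),$$
where $E_p(w) = (1-w)\exp(w + w^2/2 + \ldots + w^p/p)$ is the standard Weierstrass elementary factor. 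Then $f$ is entire and even, and its zero set $\{\pm i r_n : n \geq 1\}$ lies on the imaginary axis.

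\textbf{Upper bound.} On $|z| = r$, applying the classical estimate $\log|E_p(w)| \leq C_p \min\{|w|^p, |w|^{p+1}\}$, splitting the product over $r_n \leq r$ versus $r_n > r$, and converting each piece into a Stieltjes integral against $dn(t)$, Abel summation together with the calibration $n(r) \lesssim \omega(r)$ yields $\log|f(z)| \leq C(\omega(r) + 1)$. After adjusting the constant via $(\alpha)$ this gives (i).

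\textbf{Lower bound on $U$ and main obstacle.} For $z = x + iy$ with $|y| \leq C_3(|x|+1)$, the identity
$$\left|1 + \frac{z^2}{r_n^2}\right|^2 = \frac{\big((r_n - y)^2 + x^2\big)\big((r_n + y)^2 + x^2\big)}{r_n^4}$$
is the key. Choosing $C_3$ sufficiently small forces that either $r_n$ is far from $|y|$, in which case one factor in the numerator is $\geq r_n^2/4$, or $r_n \approx |y|$, in which case $|x| \geq r_n/C_3$ and the other factor dominates. Combined with analogous estimates for the correction factors of $E_p$ (which contribute bounded logarithmic terms on $U$), summation yields $\log|f(z)| \geq c\, n(|z|) \geq C_2 \omega(z)$, which is (ii). The delicate and technical heart of the argument, which I expect to be the main obstacle, is precisely this geometric estimate: since the zeros $\pm i r_n$ accumulate towards the boundary of $U$, one must carefully tie the choice of $C_3$ to the density constants of $(r_n)$ and to $L$ from $(\alpha)$ in order to control the "resonant" factors uniformly in $n$; this coupling is what forces the constants $C_2, C_3$ in the statement to be linked and is carried out in detail in \cite{Lan1996conti}.
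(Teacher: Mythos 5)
The paper does not actually prove this theorem: it is quoted verbatim from Langenbruch \cite{Lan1996conti} (Corollary 1.4), with a pointer to \cite{Braun1993an}, so there is no internal argument to compare yours with and your sketch has to stand on its own. Its architecture is the standard and correct one --- an even canonical product with zeros $\pm i r_n$ on the imaginary axis, an upper bound on all of $\mathbb{C}$ from the counting function and the elementary-factor estimates, and a lower bound on the sector $U$ from the identity $|r_n^2+z^2|^2=\big((r_n-y)^2+x^2\big)\big((r_n+y)^2+x^2\big)$, which you state correctly.

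The genuine gap is the calibration of the zero density, and it is not cosmetic: the two halves of your argument demand incompatible calibrations, and no choice of the form you propose works for all weights satisfying $(\alpha)$, $(\gamma)$, $(\delta)$. First, your closing chain $\log|f(z)|\geq c\,n(|z|)\geq C_2\omega(z)$ contradicts your own choice $n(r)\asymp\omega(r)/\log(1+r)$, since then $n(r)/\omega(r)\to 0$. Second, for the Gevrey weights $\omega(t)=t^{a}$, $0<a<1$ --- the principal examples in this paper --- the failure is in the object, not just in the argument: writing $N(r):=\int_0^r n(t)t^{-1}dt$, Abel summation gives $\log f(x)\asymp N(x)+n(x)+x^2\int_x^\infty n(t)t^{-3}dt$ on the real axis, and with $n(t)\asymp t^{a}/\log t$ all three terms are $O(x^{a}/\log x)=o(\omega(x))$, so (ii) fails for the $f$ you build. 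Third, $n\lesssim\omega$ alone does not yield (i): for $\omega=\log^2$ and $n\asymp\omega$ one already has $\log f(x)\geq 2N(x)\asymp\log^3 x$. The quantity that must be made comparable to $\omega(r)$ is not $n(r)$ but its logarithmic mean $N(r)$; equivalently one needs $n(e^{s})\asymp\varphi'(s)$, where convexity $(\delta)$ makes this density increasing and yields $\int_1^{S}\varphi(s)s^{-1}ds\leq\varphi(S)+\varphi(0)\log S\lesssim\varphi(S)$ (using $(\gamma)$) for the upper bound. This reduces to your $n\asymp\omega/\log$ only when $\varphi'\asymp\varphi/s$ (e.g.\ $\omega=\log^{k}$) and to $n\asymp\omega$ only when $\varphi'\asymp\varphi$ (e.g.\ $\omega=t^{a}$). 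A smaller point: to pass from $\log|f|\leq C\omega+C$ to $\log|f|\leq\omega+C_1$ you must thin the zero sequence (which scales both bounds and is why $C_2$ comes out small), not ``adjust via $(\alpha)$'', which only rescales the argument of $\omega$ and goes in the wrong direction.
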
 

From this result we deduce the analogous statement for several variables. 

\begin{theo}\label{TheoLangenbruchSeveralVariables} 
Let $\omega$ satisfy the hypotheses of Theorem~\ref{TheoLangenbruch}. Then there are a function $G \in \mathcal{H}\big(\mathbb{C}^{d}\big)$ and some constants $C_1,C_2,C_3,C_4>0$ such that 
\begin{enumerate} 
\item[i')] $\displaystyle \log{|G(z)|} \leq \omega(z) + C_1, \quad z \in \mathbb{C}^{d}$; 
\item[ii')] $\displaystyle \log{|G(z)|} \geq C_2\omega(z) - C_4, \quad \text{for} \ \ z \in \widetilde{U}:=\big\{ z \in \mathbb{C}^{d}: |\Im(z)| \leq C_3(|\Re(z)| + 1) \big\}$. 
\end{enumerate} 
\end{theo}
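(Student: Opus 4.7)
The plan is to lift Theorem~\ref{TheoLangenbruch} to $\mathbb C^d$ through a ``radial--in--the--squares'' construction. Applying Theorem~\ref{TheoLangenbruch} to the weight $\omega$ yields an even entire $f\in\mathcal H(\mathbb C)$ together with constants which, to avoid collision with the statement being proved, I rename $K_1,K_2,K_3>0$. Since $f$ is even, its Taylor expansion at $0$ has only even--order coefficients, so there exists a unique $h\in\mathcal H(\mathbb C)$ such that $f(\zeta)=h(\zeta^2)$ for every $\zeta\in\mathbb C$. I would then set
\[
G(z):=h(z_1^2+\cdots+z_d^2),\qquad z=(z_1,\ldots,z_d)\in\mathbb C^d,
\]
which is entire as the composition of $h$ with a polynomial.

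For i'), write $w=w(z):=z_1^2+\cdots+z_d^2$ and decompose $z=x+iy$ with $x,y\in\R^d$; then $w=(|x|^2-|y|^2)+2i\langle x,y\rangle$, and a Cauchy--Schwarz computation
\[
|w|^2=(|x|^2-|y|^2)^2+4\langle x,y\rangle^2\le(|x|^2+|y|^2)^2=|z|^4
\]
gives $|w|^{1/2}\le|z|$. Since $f$ is even one has $|h(w)|=|f(w^{1/2})|$ for either branch of the root, so Theorem~\ref{TheoLangenbruch}\,i) combined with the monotonicity of $\omega$ yields $\log|G(z)|\le\omega(|w|^{1/2})+K_1\le\omega(z)+K_1$, which is i') with $C_1:=K_1$.

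For ii'), fix $C_3:=\min(K_3/4,1/4)$ and show that $z\in\widetilde U$ forces $\sqrt w\in U$, so that Theorem~\ref{TheoLangenbruch}\,ii) can be invoked. Writing $\sqrt w=A+iB$ with $A,B\in\R$, the identities $A^2+B^2=|w|$, $A^2-B^2=|x|^2-|y|^2$, together with $|w|\le|x|^2+|y|^2$, give $B^2\le|y|^2$. I would split into two regimes: if $|x|\ge1$ then $|y|\le 2C_3|x|$ and $A^2\ge(1-4C_3^2)|x|^2$, so
\[
\frac{|B|}{|A|}\le\frac{2C_3}{\sqrt{1-4C_3^2}}\le K_3
\]
by the choice of $C_3$; if $|x|<1$ then $|y|\le 2C_3\le K_3$, hence $|B|\le K_3$ directly. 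In both cases $|B|\le K_3(|A|+1)$, i.e.\ $\sqrt w\in U$, and Theorem~\ref{TheoLangenbruch}\,ii) yields $\log|G(z)|\ge K_2\,\omega(|\sqrt w|)$. In the first regime one also has $|w|\ge\tfrac34|x|^2$ and $|z|^2\le\tfrac54|x|^2$, hence $|\sqrt w|\ge\sqrt{3/5}\,|z|\ge |z|/2$; iterating property $(\alpha)$ in the form $\omega(|z|/2)\ge L^{-1}\omega(z)-1$ then produces ii') with $C_2:=K_2/L$. In the bounded regime $|z|<\sqrt{1+4C_3^2}$, $\omega(z)$ is uniformly bounded while $\log|G(z)|\ge 0$ (since $\sqrt w\in U$ and $\omega\ge 0$), and the resulting finite deficit is absorbed into a sufficiently large $C_4$.

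The main technical obstacle is the purely geometric step of forcing $\sqrt w\in U$ whenever $z\in\widetilde U$, because the $d$--dimensional cone condition $|\Im z|\le C_3(|\Re z|+1)$ does not descend coordinatewise; in particular the naive tensor construction $\prod_j f(z_j)$ fails the lower bound, as the individual $z_j$ need not lie in the $1$--dimensional cone of Theorem~\ref{TheoLangenbruch}. Working instead with the single scalar $w=z_1^2+\cdots+z_d^2$ and tracking the position of its principal square root in terms of $x$ and $y$ is precisely what allows the two--regime split above to go through for one explicit choice of $C_3$ depending only on~$K_3$.
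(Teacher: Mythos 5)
Your proposal is correct and follows essentially the same route as the paper: both define $G(z)=f\bigl(\sqrt{z_1^2+\cdots+z_d^2}\bigr)$ via the evenness of the one--variable function $f$ from Theorem~\ref{TheoLangenbruch}, obtain i') from $|w|^{1/2}\le|z|$ and the monotonicity of $\omega$, and obtain ii') by showing that for a suitable cone aperture the square root of $w$ lands in the one--dimensional cone $U$. Your two--regime analysis of $\sqrt{w}=A+iB$ merely makes explicit the step the paper dispatches with ``it is easy to see.''
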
 

\begin{proof} 
By Theorem~\ref{TheoLangenbruch}, there exist an even function $f \in \mathcal{H}(\mathbb{C})$ and strictly positive constants $C_1,\widetilde{C_2},\widetilde{C_3}$ such that 
\begin{eqnarray}\label{EqTheoLangenbruchFz1} 
\log{|f(z)|} &\leq& \omega(z) + C_1, \quad  z \in \mathbb{C}; \\ \label{EqTheoLangenbruchFz2} 
\log{|f(z)|} &\geq& \widetilde{C_2}\omega(z), \quad \text{for} \ \ z \in U:=\big\{ z \in \mathbb{C}: |\Im(z)| \leq \widetilde{C_3}(|\Re(z)| + 1) \big\}. 
\end{eqnarray} 
Since $f$ is even, \[ f(z) = \sum_{n=0}^{\infty} a_n z^{2n} \] for some $\{ a_n \}_{n=0}^{\infty} \subseteq \mathbb{C}$. We observe that $\log|f(0)| \geq 0$ by formula~\eqref{EqTheoLangenbruchFz2}, and then, $a_0$ is not zero. Now, for a fixed $z=(z_1,\ldots,z_d) \in \mathbb{C}^{d} \setminus \{ 0 \}$, we set $w=\sqrt{z_1^2 + \ldots + z_d^2}$ (here we consider a square root for which $w$ is well defined) and define \[ G(z) = \sum_{n=0}^{\infty} a_n(z_1^2 + \ldots + z_d^2)^n=f(w). \] $G$ is well defined and entire, according to the properties of $f$. Observe that, since $w=\sqrt{z_1^2 + \ldots + z_d^2} \in \mathbb{C}$, we have, by~\eqref{EqTheoLangenbruchFz1}, \[ \log|G(z)|=\log|f(w)| \leq \omega(w) + C_1. \] This proves condition $i')$, since 
\begin{eqnarray*} 
\omega(w) = \omega(|w|) 
\leq \omega\Big(\sqrt{|z_1^2| + \ldots + |z_d^2|}\Big) = \omega(z). 
\end{eqnarray*} 
On the other hand, to prove $ii')$, first we observe that for a small enough $0<\varepsilon<1$, $|\Im(z)| < \varepsilon|\Re(z)|$ implies that $w \in U$. 
%
Therefore, by \eqref{EqTheoLangenbruchFz2}, we deduce
\begin{equation}\label{EqTheoIneqLogf} 
\log|G(z)|=\log|f(w)| \geq \widetilde{C_2}\omega\big(\big|\sqrt{z_1^2 + \ldots + z_d^2}\big|\big) = \widetilde{C_2}\omega\big(\sqrt{\big|z_1^2 + \ldots + z_d^2\big|}\big), 
\end{equation} 
if $|\Im(z)| < \varepsilon|\Re(z)|$. Now, from Definition~\ref{DefinitionWeightFunction}($\alpha$) and by the continuity of $G$, it is easy to see that there are constants $C_{2}, C_{3}, C_{4}>0$ such that
\begin{equation}\label{EqTheoGz11} 
\log|G(z)| \geq C_2\omega(z) - C_4,
\end{equation} 
for $|\Im(z)| \leq C_3\big(|\Re(z)| + 1\big).$ 
\end{proof}

\begin{prop}\label{PropEstimatesUltradifferentialOperator} 
Let $G \in \mathcal{H}\big(\mathbb{C}^{d}\big)$ be the function obtained in Theorem~\ref{TheoLangenbruchSeveralVariables}. Then  the function $\displaystyle q(\xi) := \frac1{G(\xi)}$, $\xi \in \mathbb{R}^{d}$, satisfies 
\begin{equation} \label{EqEstimationDerivativeQ} 
\big| D^{\beta}q(\xi) \big| \leq C\beta! R^{-|\beta|} e^{-K\omega(\xi)},
\end{equation} 
for some constants $C,K,R>0$ and every multi-index $\beta \in \mathbb{N}_{0}^{d}$ and every $\xi \in \mathbb{R}^{d}$. 
\end{prop}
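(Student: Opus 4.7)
The plan is to derive the estimate on $D^{\beta} q(\xi)$ by means of Cauchy's inequalities (Proposition~\ref{LemmaCauchyIneq}) applied on a polydisc of radius independent of $\xi$, contained in the region $\widetilde{U}$ of Theorem~\ref{TheoLangenbruchSeveralVariables} where $G$ admits a lower bound. First I would observe that $\mathbb{R}^{d}\subset\widetilde{U}$, since for $\xi\in\mathbb{R}^d$ we have $|\Im(\xi)|=0\le C_3(|\Re(\xi)|+1)$; hence $|G(\xi)|\ge e^{-C_4}>0$ by condition $ii')$, which shows that $q(\xi)=1/G(\xi)$ is well-defined and smooth on $\mathbb{R}^{d}$.

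Next, I would fix the polyradius $r:=(r_0,\ldots,r_0)$ with $r_0:=C_3/\sqrt{d}$ (or anything smaller). For any $\xi\in\mathbb{R}^d$ and any $z$ in the closed polydisc $\overline{P(\xi,r)}$, we have $|\Im(z_j)|=|\Im(z_j-\xi_j)|\le r_0$ for each $j$, hence $|\Im(z)|\le r_0\sqrt{d}\le C_3\le C_3(|\Re(z)|+1)$, so $\overline{P(\xi,r)}\subset\widetilde{U}$ independently of $\xi$. Consequently, by $ii')$ of Theorem~\ref{TheoLangenbruchSeveralVariables},
\begin{equation*}
|q(z)|=\frac{1}{|G(z)|}\le e^{-C_2\omega(z)+C_4}, \qquad z\in\overline{P(\xi,r)}.
\end{equation*}

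The next step is to replace $\omega(z)$ by $\omega(\xi)$ up to a fixed constant. Since $|\xi|\le|z|+|z-\xi|\le|z|+r_0\sqrt{d}$, in the case $|z|\ge r_0\sqrt{d}$ we have $|\xi|\le 2|z|$, so property $(\alpha)$ of Definition~\ref{DefinitionWeightFunction} yields $\omega(\xi)\le L(\omega(z)+1)$, i.e.\ $\omega(z)\ge \omega(\xi)/L-1$; in the remaining case $|z|<r_0\sqrt{d}$, the point $\xi$ lies in a fixed compact set, so $\omega(\xi)$ itself is bounded by some constant $A$ and again $\omega(z)\ge 0\ge\omega(\xi)/L-A/L$. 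Combining both cases gives a constant $C'>0$ with $\omega(z)\ge\omega(\xi)/L-C'$ uniformly in $\xi\in\mathbb{R}^d$ and $z\in\overline{P(\xi,r)}$, hence
\begin{equation*}
|q(z)|\le C\, e^{-K\omega(\xi)}, \qquad K:=C_2/L,\quad C:=e^{C_2 C'+C_4}.
\end{equation*}

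Finally, I would apply Proposition~\ref{LemmaCauchyIneq} to the continuous, partially holomorphic function $q$ on an open neighbourhood of $\overline{P(\xi,r)}$ (which exists since $G$ does not vanish on this set), obtaining
\begin{equation*}
|D^{\beta}q(\xi)|\le \sup_{z\in\partial P(\xi,r)}|q(z)|\,\frac{\beta!}{r^{\beta}}\le C\beta!\, r_0^{-|\beta|}\, e^{-K\omega(\xi)},
\end{equation*}
which is the desired estimate with $R:=r_0$. The only slightly delicate point is choosing the polyradius uniformly in $\xi$ so that both the non-vanishing of $G$ and the relation between $\omega(z)$ and $\omega(\xi)$ can be controlled by fixed constants; once $r_0$ is fixed in terms of $C_3$ and $d$, everything else is a direct consequence of Theorem~\ref{TheoLangenbruchSeveralVariables} and property $(\alpha)$.
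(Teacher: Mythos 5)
Your proposal is correct and follows essentially the same route as the paper: a polydisc of fixed polyradius $C_3/\sqrt{d}$ centred at real points stays inside $\widetilde U$, the lower bound $ii')$ controls $\sup|q|$ on its boundary, and Cauchy's inequalities give the factor $\beta!\,R^{-|\beta|}$. The only difference is that you spell out, via property $(\alpha)$ and a case distinction on $|z|$, the step $-C_2\omega(\zeta)\le -K\omega(\xi)+A$ that the paper dismisses as ``not difficult to see''; that detail is handled correctly.
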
 
\begin{proof} 
First, we observe that if we take the polyradius $r=(R,\ldots,R) \in \mathbb{R}^{d}_{+}$, with $\displaystyle R \leq \frac1{\sqrt{d}}C_3$ then 
 the polydisc $P(\xi,r)$ satisfies \[ P(\xi,r) \subseteq \widetilde{U}:= \big\{ z \in \mathbb{C}^{d}: |\Im(z)| \leq C_3(|\Re(z)| + 1) \big\}, \] where $\widetilde{U}$ and $C_3>0$ are taken from Theorem~\ref{TheoLangenbruchSeveralVariables} $ii$'). 

Now, we fix a multi-index $\beta \in \mathbb{N}_{0}^{d}$. By taking $\widetilde{C}=\exp\{C_4\}$, where $C_4>0$ comes from Theorem~\ref{TheoLangenbruchSeveralVariables}, and Cauchy's inequalities, we have 
\begin{equation*}\label{EqEstimationQ2} 
|D^{\beta}q(\xi)| \leq \frac{\beta!}{r^{\beta}} \sup_{\zeta \in \partial P(\xi,r)} |q(\zeta)| \leq \widetilde{C}\frac{\beta!}{r^{\beta}} \sup_{\zeta \in \partial P(\xi,r)} e^{-C_2\omega(\zeta)}. 
\end{equation*} 
 Now, since the weight $\omega$ is increasing and satisfies $(\alpha)$,  it is not difficult to see that 
 $$
 -C_{2}\omega(\zeta)\le -K\omega(\xi)+A,
 $$
 where $K,A>0$ only depend on $C_{2}, C_{3}$, the weight $\omega$ and the dimension $d.$ Moreover, $r^{\beta}=R^{|\beta|}$, so we obtain \eqref{EqEstimationDerivativeQ} for $\xi \in \mathbb{R}^d$, which finishes the proof. 
\end{proof} 

In what follows, we will  consider a suitable power of the function of Proposition~\ref{PropEstimatesUltradifferentialOperator}.  The following result can be proved in the same way. 
\begin{cor}\label{CorEstimatesUltradifferentialOperator} 
For $n \in \mathbb{N}$, let $G^{n}$ denote the $n$-th power of the entire function $G$ of Proposition~\ref{PropEstimatesUltradifferentialOperator}. Then $\displaystyle q^{n} = G^{-n}$ satisfies 
\begin{equation}\label{EqEstimationDerivativesUltraOperator} 
|D^{\beta}q^{n}(\xi)| \leq C^{n} \beta! R^{-|\beta|}e^{-nK\omega(\xi)}, 
\end{equation} 
for the same constants $C,K,R>0$ from Proposition~\ref{PropEstimatesUltradifferentialOperator} and for every $\beta \in \mathbb{N}_{0}^{d}$ and  $\xi \in \mathbb{R}^{d}$. 
\end{cor}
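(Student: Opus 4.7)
The plan is to repeat the argument of Proposition~\ref{PropEstimatesUltradifferentialOperator} verbatim, but applied to $G^{n}$ in place of $G$, and then track carefully how the constants depend on $n$. The key observation is that the exponential bounds of Theorem~\ref{TheoLangenbruchSeveralVariables} multiply by $n$ when we pass to $G^{n}$: for every $z\in\mathbb{C}^{d}$,
\[
\log|G^{n}(z)| = n\log|G(z)| \ge nC_{2}\omega(z) - nC_{4}, \qquad z \in \widetilde{U},
\]
so on $\widetilde U$ we have $|q^{n}(z)|=|G^{n}(z)|^{-1}\le e^{nC_{4}}\,e^{-nC_{2}\omega(z)}$.

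Fix a multi-index $\beta\in\mathbb{N}_{0}^{d}$ and $\xi\in\mathbb{R}^{d}$. As in the proof of Proposition~\ref{PropEstimatesUltradifferentialOperator}, choose the same polyradius $r=(R,\ldots,R)$ with $R\le C_{3}/\sqrt{d}$, so that $P(\xi,r)\subset\widetilde U$. Then $q^{n}$ is holomorphic on $P(\xi,r)$, and Cauchy's inequalities (Proposition~\ref{LemmaCauchyIneq}) give
\[
|D^{\beta}q^{n}(\xi)| \le \frac{\beta!}{r^{\beta}}\sup_{\zeta\in\partial P(\xi,r)}|q^{n}(\zeta)|
\le e^{nC_{4}}\,\frac{\beta!}{R^{|\beta|}}\sup_{\zeta\in\partial P(\xi,r)} e^{-nC_{2}\omega(\zeta)}.
\]

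The remaining step is to replace $\omega(\zeta)$ with $\omega(\xi)$ up to additive constants, uniformly in $\zeta\in\partial P(\xi,r)$. Since $R$ is fixed, the inequality $|\zeta|\le|\xi|+R\sqrt{d}$, together with property $(\alpha)$ of Definition~\ref{DefinitionWeightFunction} and~\eqref{EqOmegaDiff}, yields the same constants $K,A>0$ as in the proof of Proposition~\ref{PropEstimatesUltradifferentialOperator} (depending only on $\omega$, $C_{2}$, $C_{3}$ and $d$) with
\[
-C_{2}\omega(\zeta) \le -K\omega(\xi) + A, \qquad \zeta \in \partial P(\xi,r),
\]
and multiplying by $n$ we get $-nC_{2}\omega(\zeta)\le -nK\omega(\xi)+nA$. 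Substituting, we obtain
\[
|D^{\beta}q^{n}(\xi)| \le e^{n(C_{4}+A)}\,\beta!\,R^{-|\beta|}\,e^{-nK\omega(\xi)} = C^{n}\beta!\,R^{-|\beta|}\,e^{-nK\omega(\xi)},
\]
with $C:=e^{C_{4}+A}$, which is~\eqref{EqEstimationDerivativesUltraOperator}.

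There is no real obstacle: the only subtle point is to verify that the constants $K$ and $R$ can be chosen \emph{independently of} $n$. This is immediate because the polydisc $P(\xi,r)\subset\widetilde U$ is fixed once $R\le C_{3}/\sqrt{d}$ is chosen, and the estimate relating $\omega(\zeta)$ and $\omega(\xi)$ on $\partial P(\xi,r)$ does not involve $n$ at all; the factor $n$ appears only as a multiplier in front of $\omega$ through the exponent of $|q^{n}|$, producing precisely the factors $C^{n}$ and $e^{-nK\omega(\xi)}$ required.
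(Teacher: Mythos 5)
Your proof is correct and follows exactly the route the paper intends: the paper gives no separate proof of this corollary, stating only that it ``can be proved in the same way'' as Proposition~\ref{PropEstimatesUltradifferentialOperator}, and your argument is precisely that repetition with the $n$-dependence of the constants tracked correctly ($R$ and $K$ fixed, the lower bound and the comparison $-C_{2}\omega(\zeta)\le -K\omega(\xi)+A$ each scaled by $n$, yielding $C^{n}$).
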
 

Moreover, we see that there is a constant $C>0$ such that 
\begin{equation}\label{obs1}
 \big| D^{\alpha}G(0) \big| \leq \alpha! e^{C} e^{-C \varphi^{\ast}\big(\frac{|\alpha|}{C}\big)}, 
\end{equation} 
 for all $\alpha \in \mathbb{N}_{0}^{d}$. To prove this, we fix $r=(R,\ldots,R) \in \mathbb{R}^{d}_{+}$ with $R>0$ and $\alpha \in \mathbb{N}_{0}^{d}$. By Cauchy's integral formula we obtain 
\[ D^{\alpha}G(0) = \frac{\alpha!}{(2\pi i)^d} \int_{\partial P(0,r)} \frac{G(\xi)}{\xi^{\alpha+(1,\ldots,1)}} d\xi. \] 
Hence,  
\begin{eqnarray*} 
\big| D^{\alpha}G(0) \big| &\leq& \frac{\alpha!}{(2\pi)^{d}} \frac{(2\pi)^{d} R^{d}}{R^{|\alpha|+d}} \max_{\zeta \in \partial P(0,r)} |G(\zeta)| \leq \frac{\alpha!}{R^{|\alpha|}} e^{\omega(R)+C_1}, 
\end{eqnarray*} where $C_1>0$ comes from $i')$ of Theorem~\ref{TheoLangenbruchSeveralVariables}. Besides, we have 
\begin{eqnarray*} 
\inf_{R>0} \big\{ R^{-|\alpha|} e^{\omega(R)} \big\} &=& \big( \sup_{R>0} \big\{ R^{|\alpha|} e^{-\omega(R)} \big\} \big)^{-1} \le \big( \sup_{s>0} \big\{ e^{s|\alpha|} e^{-\varphi(s)} \big\} \big)^{-1}\\ 
&=& \big( e^{ \sup_{s>0} \{ s|\alpha|- \varphi(s) \} } \big)^{-1}= e^{-\varphi^{\ast}(|\alpha|)}. 
\end{eqnarray*} This implies 
\[ \big| D^{\alpha}G(0) \big| \leq \alpha! e^{C_1-\varphi^{\ast}(|\alpha|)}. \] 
Then, we can take $C:=\max\{C_{1},1\}$ to obtain \eqref{obs1}. 

Since $G$ is entire, we can write $ G(z) = \sum_{\alpha \in \mathbb{N}_{0}^{d}} a_{\alpha} z^{\alpha}, \ z \in \mathbb{C}^{d}$, for some sequence $(a_{\alpha})_{\alpha \in \mathbb{N}_{0}^d} \subseteq \mathbb{C}$. Hence, we also have
\begin{equation}\label{EqEstimationAAlpha} 
|a_{\alpha}| \leq e^{C} e^{-C \varphi^{\ast}\big(\frac{|\alpha|}{C}\big)}, \qquad \alpha \in \mathbb{N}_{0}^{d}. 
\end{equation} 

If $n\in\N$ and we consider the $n$-th power of $G$, $G^{n}$, we also have $G^{n}(z) = \sum_{\alpha \in \mathbb{N}_{0}^{d}} b_{\alpha}z^{\alpha}$, $z \in \mathbb{C}^{d}$, for some sequence $(b_{\alpha})_{\alpha \in \mathbb{N}_{0}^{d}} \subseteq \mathbb{C}$; proceeding as before we can see that 
\begin{equation}\label{EqEstimationBAlpha} 
|b_{\alpha}| \leq e^{nC} e^{-nC \varphi^{\ast}\big(\frac{|\alpha|}{nC}\big)}, \qquad \alpha \in \mathbb{N}_{0}^{d}. 
\end{equation}

\section{Pseudodifferential operators}

\label{section-pseudos} 
Following Prangoski~\cite{Pran2013pseudo} and Shubin~\cite{Shu2001pseudo} we state our definition of global symbol and global amplitude. In what follows, $m\in\R$ and $0<\rho\le 1$.
\begin{defin}\label{symbol} 
A symbol in $\GS_{\rho}^{m,\omega}$ is a function $p(x,\xi) \in C^{\infty}\big(\mathbb{R}^{2d}\big)$ such that for all $n \in \mathbb{N}$, there exists $C_n>0$ with 
\[ \big| D^{\alpha}_x D^{\beta}_{\xi} p(x,\xi) \big| \leq C_n \frac1{\langle(x,\xi)\rangle^{\rho|\alpha+\beta|}} e^{n \rho \varphi^{\ast}\big(\frac{|\alpha+\beta|}{n}\big)} e^{m\omega(x)} e^{m\omega(\xi)}, \] for all $(x,\xi) \in \mathbb{R}^{2d}$ and $(\alpha,\beta) \in \mathbb{N}_0^{2d}$. 
\end{defin} 

\begin{defin}\label{amplitude} 
An amplitude in $\GA_{\rho}^{m,\omega}$ is a function $a(x,y,\xi) \in C^{\infty}\big(\mathbb{R}^{3d}\big)$ such that for all $n\in \mathbb{N}$ there exists $C_n>0$ with 
\[ \big| D^{\alpha}_{x} D^{\beta}_{y} D^{\gamma}_{\xi} a(x,y,\xi) \big| \leq C_n \frac{\langle x-y \rangle^{\rho|\alpha + \beta + \gamma|}}{\langle(x,y,\xi)\rangle^{\rho|\alpha + \beta + \gamma|}} e^{n \rho \varphi^{\ast}\big(\frac{|\alpha + \beta + \gamma|}{n}\big)} e^{m(\omega(x) + \omega(y) + \omega(\xi) )}, \] for all $(x,y,\xi) \in \mathbb{R}^{3d}$ and $(\alpha,\beta,\gamma) \in \mathbb{N}_{0}^{3d}$. 
\end{defin} 

We define the pseudodifferential operators for amplitudes as in Definition~\ref{amplitude} using oscillatory integrals. Let $\chi \in \mathcal{S}_{\omega}\big(\mathbb{R}^{2d}\big)$ be such that $\chi(0,0)=1$. We consider for $f \in \Sch_\omega(\R^{d})$ the double integral 
\begin{equation} 
A_{\delta,\chi}(f)(x) := \int_{\mathbb{R}^d} \int_{\mathbb{R}^d} e^{i(x-y)\xi} a(x,y,\xi) \chi(\delta x, \delta \xi) f(y) dy d\xi.\label{oscilatoria} 
\end{equation} 
We will see that $A_{\delta,\chi}(f)$  converges for every $f\in\Sch_{\omega}(\R^{d})$ when $\delta \to 0$, defining a linear and continuous operator $A:\Sch_{\omega}(\R^{d})\to \Sch_{\omega}(\R^{d})$ given by the iterated integral 
\[ A(f) = \int_{\mathbb{R}^d} \left(\int_{\mathbb{R}^d} e^{i(x-y)\xi} a(x,y,\xi) f(y) dy\right) d\xi, \qquad f \in \Sch_\omega(\R^{d}).\] 

\begin{prop}\label{LemmaIteratedIntegralCauchy} 
Let $\chi\in \mathcal{S}_{\omega}\big(\mathbb{R}^{2d}\big)$. Then, for any function $f\in\Sch_{\omega}(\R^{d})$, the sequence $\big( A_{\frac1{n},\chi}(f)\big)_{n\in \N}$ as in \eqref{oscilatoria} is a Cauchy sequence in $\Sch_\omega(\R^{d})$. 
\end{prop}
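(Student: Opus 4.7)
The plan is to regularize the oscillatory integral by integration by parts with the $(\omega)$-ultradifferential operators $G^{N}(D)$ built from Theorem~\ref{TheoLangenbruchSeveralVariables} and to show that the resulting integrand admits an $L^{1}$-majorant which is uniform in $\delta\in(0,1]$. The $G$ supplied by Theorem~\ref{TheoLangenbruchSeveralVariables} is even (it is $f(\sqrt{z_{1}^{2}+\cdots+z_{d}^{2}})$ with $f$ even), and $D_{j}^{t}=-D_{j}$, so $G^{N}(D_{y})$ and $G^{N}(D_{\xi})$ are self-adjoint under integration by parts. The two key identities are
\[
e^{i(x-y)\xi}=q^{N}(\xi)\,G^{N}(D_{y})e^{i(x-y)\xi}=q^{N}(x-y)\,G^{N}(D_{\xi})e^{i(x-y)\xi},\qquad q:=1/G,
\]
which are licit because $G$ is bounded below on $\mathbb{R}^{d}$ by Theorem~\ref{TheoLangenbruchSeveralVariables}$(ii')$, while $q^{N}$ enjoys the rapid decay $|D^{\beta}q^{N}(\eta)|\le C^{N}\beta!\,R^{-|\beta|}e^{-NK\omega(\eta)}$ from Corollary~\ref{CorEstimatesUltradifferentialOperator}.

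Applying both identities to the iterated integral defining $A_{\delta,\chi}(f)(x)$ (legitimate for any fixed $\delta>0$, since $\chi(\delta x,\delta\xi)$ has $\Sch_{\omega}$-decay in $\xi$ and $f$ in $y$, so boundary terms vanish), differentiating in $x$ under the integral sign, and expanding $D_{x}^{\alpha}$ and $G^{N}(D_{\xi})G^{N}(D_{y})$ by Leibniz yields a representation of $D_{x}^{\alpha}A_{\delta,\chi}(f)(x)$ as a finite sum of absolutely convergent integrals whose integrands carry the factors $q^{N}(\xi)q^{N}(x-y)$, a polynomial $(i\xi)^{\alpha'''}$ from the $x$-derivatives of the oscillatory factor, the harmless powers $\delta^{|\alpha''|}$ from chain-rule derivatives of $\chi$, the Taylor coefficients of $G^{N}$ estimated by~\eqref{EqEstimationBAlpha}, and the derivatives of $a,\chi,f$ controlled respectively by Definition~\ref{amplitude}, the $\Sch_{\omega}$-seminorms of $\chi$, and Lemma~\ref{LemmaEquivalence}. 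Combining all these bounds, using $\omega(x)\le L(\omega(x-y)+\omega(y)+1)$ from~\eqref{EqOmegaDiff} to convert joint decay in $x-y$ and $y$ into decay in $x$, and bundling the various Young-conjugate terms by Lemma~\ref{lemmafistrella}, one reaches a pointwise bound on the integrand of the form
\[
C_{\lambda}\, e^{\lambda\varphi^{\ast}(|\alpha|/\lambda)}\, e^{-\lambda\omega(x)}\, \Phi(y,\xi),
\]
with $\Phi\in L^{1}(\mathbb{R}^{2d})$ and both $C_{\lambda}$ and $\Phi$ independent of $\delta\in(0,1]$, provided $N$ is chosen large enough (relative to $\lambda$ and $m$) to dominate $e^{m\omega(\xi)}$ together with the polynomial $\langle\xi\rangle^{|\alpha|}$.

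For the Cauchyness claim, we apply this representation to $A_{1/n,\chi}(f)-A_{1/m,\chi}(f)$. In the Leibniz expansion, the terms where no derivative has hit $\chi$ contain the factor $\chi(x/n,\xi/n)-\chi(x/m,\xi/m)$, which tends to $0$ pointwise as $n,m\to\infty$ by the continuity of $\chi$ at the origin together with $\chi(0,0)=1$; the remaining terms carry positive powers $n^{-|\gamma|}$ or $m^{-|\gamma|}$ and also vanish. As every such integrand is dominated by the $\Phi$-majorant of the previous paragraph, dominated convergence delivers $|A_{1/n,\chi}(f)-A_{1/m,\chi}(f)|_{\lambda}\to 0$ for each $\lambda>0$. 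The main obstacle I anticipate is the careful bookkeeping of the four independent sources of $\varphi^{\ast}$-factors (the Leibniz combinatorics, the coefficient bound~\eqref{EqEstimationBAlpha} for $G^{N}$, the amplitude estimates of Definition~\ref{amplitude}, and the decay of $f$ and $\chi$), which all have to be absorbed into the single factor $e^{\lambda\varphi^{\ast}(|\alpha|/\lambda)}$ on the right-hand side by repeated rescalings of the parameters and iterated use of both parts of Lemma~\ref{lemmafistrella}; the value of $N$ must also accommodate the polynomial growth $\langle\xi\rangle^{|\alpha|}$ coming from differentiating the oscillatory factor, which is what forces the $N$-buffer to depend on $\lambda$.
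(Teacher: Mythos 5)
Your regularization scheme is the paper's own: the same double integration by parts with $G^{n}(D_{y})$ and $G^{n}(D_{\xi})$ via the identity \eqref{ultrafor}, the same Leibniz bookkeeping of the Taylor coefficients $b_{\tau}$ of $G^{n}$, and the same strategy of choosing the power $n$ and the auxiliary parameters large relative to $\lambda$ and $m$ so that $e^{m\omega(\xi)}$, $\langle\xi\rangle^{|\alpha|}$ and the various $\varphi^{\ast}$-factors are absorbed. The gap is in the final step. The seminorm $|\cdot|_{\lambda}$ of Remark~\ref{Notaflambda} is a supremum over $x\in\mathbb{R}^{d}$ \emph{and} over all $\alpha\in\mathbb{N}_{0}^{d}$, so dominated convergence, which only yields $D^{\alpha}_{x}\big(A_{1/n,\chi}-A_{1/m,\chi}\big)(f)(x)\to0$ for each \emph{fixed} $x$ and $\alpha$, does not give convergence of that supremum. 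Moreover the pointwise convergence $\chi(x/n,\xi/n)-\chi(x/m,\xi/m)\to0$ is genuinely non-uniform in $x$: for $|x|=m\gg n$ the two arguments have norms comparable to $1$ and to $m/n$ respectively, so the difference can remain of order $1$; the only available compensation is the surplus in the weight $e^{-\lambda\omega(x)}$, which your write-up has already spent on producing the seminorm weight.

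The paper closes exactly this point quantitatively, with the mean value theorem: $\big|D^{\beta_3}_{x}D^{\tau_3}_{\xi}\big(\chi(\tfrac{x}{k},\tfrac{\xi}{k})-\chi(\tfrac{x}{l},\tfrac{\xi}{l})\big)\big|\leq |\nabla D^{\beta_3}_{x}D^{\tau_3}_{\xi}\chi(c)|\,\big|\tfrac1{k}-\tfrac1{l}\big|\,|x|\,|\xi|$, after which the polynomial factors $|x|$ and $|\xi|$ are absorbed by $e^{-\lambda\omega(x)}$ and $e^{-\omega(\xi)}$ using property $(\gamma)$ of the weight, leaving the bound \eqref{case2} of the form $C_{\lambda}\big|\tfrac1{k}-\tfrac1{l}\big|e^{\lambda\varphi^{\ast}(|\beta|/\lambda)}\,|x|e^{-\lambda\omega(x)}$, which is uniform in $x$ and in the multi-index and tends to $0$ at the explicit rate $|\tfrac1{k}-\tfrac1{l}|$. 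Your argument is repaired by substituting this quantitative estimate for the dominated-convergence step (or, alternatively, by a two-region argument: the mean value theorem on $\{|x|\leq R\}$ and the surplus of $e^{-\lambda\omega(x)}$ on $\{|x|>R\}$); as written, the last paragraph does not establish convergence in the topology of $\mathcal{S}_{\omega}(\mathbb{R}^{d})$.
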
 
\begin{proof} 
We consider  the family of seminorms of Remark~\ref{Notaflambda}. We show that, for any $f\in\Sch_{\omega}(\R^{d})$ and $\lambda>0$, $$|(A_{\frac1{k},\chi} - A_{\frac1{l},\chi})(f)|_{\lambda}$$ goes to zero when $l,k$ tend to infinity. 

To this aim, we fix $\beta \in \mathbb{N}_0^d$ and $x \in \mathbb{R}^d$, and calculate 
\begin{eqnarray} \label{integrand}
\lefteqn{ D^{\beta}_x \int_{\mathbb{R}^d} \int_{\mathbb{R}^d} e^{i(x-y)\xi} a(x,y,\xi) \Big( \chi\big(\frac1{k} x, \frac1{k} \xi\big) - \chi\big(\frac1{l}x, \frac1{l}\xi \big) \Big) f(y) dy d\xi  }\nonumber\\ 
&&= \sum_{\beta_1+\beta_2+\beta_3=\beta} \frac{\beta!}{\beta_1! \beta_2! \beta_3!} \iint_{\mathbb{R}^{2d}} e^{i(x-y)\xi} \xi^{\beta_1} D^{\beta_2}_x a(x,y,\xi) D^{\beta_3}_x \Big( \chi\big(\frac1{k} x, \frac1{k} \xi\big) - \chi\big(\frac1{l}x, \frac1{l}\xi \big) \Big) f(y) dy d\xi.\ \ \ \ \  
\end{eqnarray} 
For the ultradifferential operator $G(D)$ and its corresponding symbol $G(\xi)$ given in Theorem~\ref{TheoLangenbruchSeveralVariables}, the following formula holds for each $n\in\N$: 
\begin{equation}\label{ultrafor} 
e^{i(x-y)\xi} = \frac1{G^{n}(y-x)} G^{n}(-D_{\xi}) \Big( \frac1{G^{n}(\xi)} G^{n}(-D_y) e^{i(x-y)\xi} \Big).
\end{equation} 
Now, we use the notation of \eqref{EqEstimationBAlpha} and formula \eqref{ultrafor}, and integrate by parts to obtain the following expression for the integrand of~\eqref{integrand}:
\begin{eqnarray*} 
&& e^{i(x-y)\xi} \frac1{G^n(\xi)} G^n(D_y) \Big( \frac1{G^n(y-x)} G^n(D_{\xi}) \Big( \xi^{\beta_1} D^{\beta_2}_x a(x,y,\xi) \times \\ 
&&\ \ \ \ \times D^{\beta_3}_x \Big( \chi\big(\frac1{k} x, \frac1{k} \xi\big) - \chi\big(\frac1{l}x, \frac1{l}\xi \big) \Big) f(y) \Big) \Big)  \\ 
&& = e^{i(x-y)\xi} \frac1{G^n(\xi)} G^n(D_y) \Big( \frac1{G^n(y-x)} \sum_{\tau \in \N_0^{d}} b_{\tau} \sum_{\tau_1+\tau_2+\tau_3=\tau,\,\tau_{1}\le \beta_{1}} \frac{\tau!}{\tau_1! \tau_2! \tau_3!} \frac{\beta_1!}{(\beta_1-\tau_1)!} (-i)^{|\tau_{1}|} \xi^{\beta_1-\tau_1} \times \\ 
&&\ \ \ \  \times D^{\beta_2}_x D^{\tau_2}_{\xi} a(x,y,\xi) D^{\beta_3}_x D^{\tau_3}_{\xi} \Big( \chi\big(\frac1{k} x, \frac1{k} \xi\big) - \chi\big(\frac1{l}x, \frac1{l}\xi \big) \Big) f(y) \Big) \\ 
&& = e^{i(x-y)\xi} \frac1{G^n(\xi)} \sum_{\epsilon,\tau \in \N_0^{d}} b_{\epsilon} b_{\tau} \sum_{\substack{\epsilon_1+\epsilon_2+\epsilon_3=\epsilon \\ \tau_1+\tau_2+\tau_3=\tau,\,\tau_{1}\le \beta_{1}}} \frac{\epsilon!}{\epsilon_1! \epsilon_2! \epsilon_3!} \frac{\tau!}{\tau_1! \tau_2! \tau_3!} \frac{\beta_1!}{(\beta_1-\tau_1)!} (-i)^{|\tau_{1}|}\xi^{\beta_1-\tau_1} \times \\ 
&&\ \ \ \  \times D^{\epsilon_1}_y \frac1{G^n(y-x)} D^{\beta_2}_x D^{\epsilon_2}_y D^{\tau_2}_{\xi} a(x,y,\xi) D^{\beta_3}_x D^{\tau_3}_{\xi} \Big( \chi\big(\frac1{k} x, \frac1{k} \xi\big) - \chi\big(\frac1{l}x, \frac1{l}\xi \big) \Big) D^{\epsilon_3}_y f(y).
\end{eqnarray*} 
Hence,  $D^{\beta}_x\big(A_{\frac1{k},\chi} - A_{\frac1{l},\chi}\big)(f)$ is equal to 
\begin{eqnarray*} 
&& \sum_{\epsilon,\tau \in \N_0^{d}} b_{\epsilon} b_{\tau} \sum_{\substack{\epsilon_1+\epsilon_2+\epsilon_3=\epsilon \\ \tau_1+\tau_2+\tau_3=\tau, \,\tau_{1}\le \beta_{1} \\ \beta_1+\beta_2+\beta_3=\beta}} \frac{\epsilon!}{\epsilon_1! \epsilon_2! \epsilon_3!} \frac{\tau!}{\tau_1! \tau_2! \tau_3!} \frac{\beta!}{\beta_1! \beta_2! \beta_3!} \frac{\beta_1!}{(\beta_1-\tau_1)!} \int_{\mathbb{R}^d} \int_{\mathbb{R}^d} e^{i(x-y)\xi} \frac{(-i)^{|\tau_{1}|}}{G^n(\xi)} \xi^{\beta_1-\tau_1} \times \\ 
&& \ \ \ \ \times D^{\epsilon_1}_y \frac1{G^n(y-x)} D^{\beta_2}_x D^{\epsilon_2}_y D^{\tau_2}_{\xi} a(x,y,\xi) D^{\beta_3}_x D^{\tau_3}_{\xi} \Big( \chi\big(\frac1{k} x, \frac1{k} \xi\big) - \chi\big(\frac1{l}x, \frac1{l}\xi \big) \Big) D^{\epsilon_3}_y f(y) dy d\xi. 
\end{eqnarray*} 

Now, we fix $\lambda>0$ and take $s \geq \lambda$ and $n\in\N$ to be determined. Since $f \in \Sch_\omega(\R^{d})$, for the constant $L>0$ of Lemma~\ref{lemmafistrella}\,(1) we have 
\[ |D^{\epsilon_3}_y f(y)| \leq E'_s e^{sL^3 \varphi^{\ast}\big(\frac{|\epsilon_3|}{sL^3}\big)} e^{-sL^3\omega(y)}. \] Moreover, by the definition of amplitude and according to Lemma~\ref{LemmaCorcheteIneq} and formula \eqref{EqLLambdaVarphi}, we have that there is a constant $E_{s}>0$ depending on $s$ such that 
\begin{eqnarray*} 
|D^{\beta_2}_x D^{\epsilon_2}_y D^{\tau_2}_{\xi} a(x,y,\xi)| &\leq& E_{s} \Big(\frac{\langle x-y \rangle}{\langle(x,y,\xi)\rangle}\Big)^{\rho|\beta_2+\epsilon_2+\tau_2|} e^{4L^4s\rho \varphi^{\ast}\big(\frac{|\beta_2+\epsilon_2+\tau_2|}{4L^4s}\big)} e^{m\omega(x)} e^{m\omega(y)} e^{m\omega(\xi)} \\ 
&\leq& E_{s} \sqrt{2}^{|\beta_2+\epsilon_2+\tau_2|} e^{4L^4s \varphi^{\ast}\big(\frac{|\beta_2+\epsilon_2+\tau_2|}{4L^4s}\big)} e^{m\omega(x)} e^{m\omega(y)} e^{m\omega(\xi)}\\ 
&\leq& E_{s} e^{4L^4s} e^{4L^3s \varphi^{\ast}\big(\frac{|\beta_2+\epsilon_2+\tau_2|}{4L^3s}\big)} e^{m\omega(x)} e^{m\omega(y)} e^{m\omega(\xi)}. 
\end{eqnarray*} 
By \eqref{EqLemmaTechnical1} and \eqref{EqOmegaCorchetex}, we also obtain
\[ |\xi|^{|\beta_1-\tau_1|} \leq \langle \xi \rangle^{|\beta_1-\tau_1|} \leq e^{\lambda L^3 \varphi^{\ast}\big(\frac{|\beta_1-\tau_1|}{\lambda L^3}\big)} e^{\lambda L^3 \omega(\langle \xi \rangle)} \leq e^{\lambda L^3 \varphi^{\ast}\big(\frac{|\beta_1|}{\lambda L^3}\big)} e^{\lambda L^4} e^{\lambda L^4\omega(\xi)}. \] By \eqref{EqEstimationBAlpha},  there is $C_1>0$ that depends only on $G$ such that 
\[ |b_{\epsilon}| \leq e^{nC_1} e^{-nC_1 \varphi^{\ast}\big(\frac{|\epsilon|}{nC_1}\big)}, \qquad |b_{\tau}| \leq e^{nC_1} e^{-nC_1 \varphi^{\ast}\big(\frac{|\tau|}{nC_1}\big)}, \] 
and, by Corollary~\ref{CorEstimatesUltradifferentialOperator} and Proposition~\ref{PropTechnical}, there are constants $C_2,C_3,C_4>0$ which depend only on $G$ such that 
\begin{eqnarray*} 
\Big| \frac1{G^n(\xi)} \Big| &\leq& C_4^n e^{-nC_2 \omega(\xi)},\ \ \ \ \mbox{and} \\ 
\Big| D^{\epsilon_1}_y \frac1{G^n(y-x)} \Big| &\leq& C_4^n \epsilon_1! C_3^{-|\epsilon_1|} e^{-nC_2 \omega(y-x)} \leq C_4^n C_s e^{sL^3 \varphi^{\ast}\big(\frac{|\epsilon_1|}{sL^3}\big)} e^{-nC_2 \omega(y-x)}, 
\end{eqnarray*} where $C_{s}>0$ depends on $C_3$ and $s$. Finally, since $\omega(x) \leq L\omega(y-x) + L\omega(y) + L$, and (again by Proposition~\ref{PropTechnical}),
\[ \frac{\beta_1!}{(\beta_1-\tau_1)!} \leq 2^{|\beta_1|} \tau_1! \leq 2^{|\beta_1|} C'_s e^{sL^3 \varphi^{\ast}\big(\frac{|\tau_1|}{sL^3}\big)}, \] for some constant $C'_s>0$ depending on $s$, we get 
\begin{eqnarray*} 
\lefteqn{\big|D^{\beta}_{x} \big((A_{\frac1{k},\chi} - A_{\frac1{l},\chi} )(f)\big)(x)\big|}\label{series}\\ 
&&\nonumber\le \sum_{\epsilon,\tau \in \N_0^{d}} e^{2nC_1} e^{-nC_1 \varphi^{\ast}\big(\frac{|\epsilon|}{nC_1}\big)} e^{-nC_1 \varphi^{\ast}\big(\frac{|\tau|}{nC_1}\big)} \sum_{\substack{\epsilon_1+\epsilon_2+\epsilon_3=\epsilon \\ \tau_1+\tau_2+\tau_3=\tau, \,\tau_{1}\le \beta_{1} \\ \beta_1+\beta_2+\beta_3=\beta}} \frac{\epsilon!}{\epsilon_1! \epsilon_2! \epsilon_3!} \frac{\tau!}{\tau_1! \tau_2! \tau_3!} \frac{\beta!}{\beta_1! \beta_2! \beta_3!} 2^{|\beta_1|} C'_{s} \times \\ 
&& \nonumber\ \ \ \ \times e^{sL^3 \varphi^{\ast}\big(\frac{|\tau_1|}{sL^3}\big)} \int \Big| D^{\beta_3}_x D^{\tau_3}_{\xi} \Big( \chi\big(\frac1{k}x, \frac1{k}\xi \big) - \chi\big(\frac1{l}x, \frac1{l}\xi\big) \Big) \Big| \left[\int C^{2n}_4 e^{-nC_2 \omega(\xi)} e^{\lambda L^3 \varphi^{\ast}\big(\frac{|\beta_1|}{\lambda L^3}\big)} e^{\lambda L^4} \right. \times \\ 
&& \nonumber\ \ \ \ \times e^{\lambda L^4 \omega(\xi)} C_{s} e^{sL^3 \varphi^{\ast}\big(\frac{|\epsilon_1|}{sL^3}\big)} e^{-nC_2 \omega(y-x)} E_s e^{4L^4s} e^{4L^3s \varphi^{\ast}\big(\frac{|\beta_2+\epsilon_2+\tau_2|}{4L^3s}\big)} \times \\ 
&& \nonumber \ \ \ \ \left.\times e^{m\omega(y)} e^{mL \omega(y-x)} e^{mL \omega(y)} e^{mL} e^{m\omega(\xi)} E'_s e^{sL^3 \varphi^{\ast}\big(\frac{|\epsilon_3|}{sL^3}\big)} e^{-sL^3 \omega(y)} dy \right]d\xi. 
\end{eqnarray*} 
We set 
\[ n \geq \max\big\{ \frac1{C_2}(1+\lambda L^4+m), \frac1{C_2}(m+\lambda)L \big\}. \] The first one is stated in order to get \[ e^{(m+\lambda L^4-nC_2)\omega(\xi)} \leq e^{-\omega(\xi)}, \] and the other one to obtain \[ e^{(mL-nC_2)\omega(y-x)} \leq e^{-\lambda L \omega(y-x)}. \] 
Moreover, we put 
\[ s \geq \max\big\{ \frac1{L^3}(1+\lambda L+m+mL), nC_1\}. \] In this case, by the first inequality we obtain 
\[ 
e^{(m+mL-sL^3)\omega(y)} \leq e^{-\omega(y)} e^{-\lambda L \omega(y)}. 
\] 
According to $\omega(x) \leq L\omega(y-x) + L\omega(y) + L$, we get
\[ e^{-\lambda L \omega(y-x)} e^{-\lambda L \omega(y)} \leq e^{\lambda L} e^{-\lambda \omega(x)}. \] 
By the mean value theorem, there exists $c$ in the line segment between $(\frac1{l}x, \frac1{l}\xi)$ and $(\frac1{k}x, \frac1{k}\xi)$ such that 
\begin{eqnarray*} 
\big| D^{\beta_3}_x D^{\tau_3}_{\xi}\Big(\chi\big(\frac1{k}x, \frac1{k}\xi \big) - \chi\big(\frac1{l}x, \frac1{l}\xi \big)\Big)\big|& = &|\nabla D^{\beta_3}_x D^{\tau_3}_{\xi}\chi(c)| \big| \frac1{k} - \frac1{l}\big||(x,\xi)|\\ &\leq& D'_{\lambda,s} e^{\lambda \varphi^{\ast}\big(\frac1{\lambda}\big)+2sL^{3}\lambda \varphi^{\ast}\big(\frac{|\beta_{3}+\tau_{3}|}{2sL^{3}\lambda}\big)} \big| \frac1{k} - \frac1{l} \big||x||\xi|, 
\end{eqnarray*} for some constant $D'_{\lambda,s}>0$. 
Now, by Lemma~\ref{lemmafistrella}, since $s\ge \lambda,$ we have 
\begin{eqnarray*} 
&& 2^{|\beta_1|} e^{sL^3 \varphi^{\ast}\big(\frac{|\tau_1|}{sL^3}\big)} e^{\lambda L^3 \varphi^{\ast}\big(\frac{|\beta_1|}{\lambda L^3}\big)} e^{sL^3 \varphi^{\ast}\big(\frac{|\epsilon_1|}{sL^3}\big)} e^{4sL^3 \varphi^{\ast}\big(\frac{|\beta_2+\epsilon_2+\tau_2|}{4sL^3}\big)} e^{sL^3 \varphi^{\ast}\big(\frac{|\epsilon_3|}{sL^3}\big)} e^{2sL^3 \varphi^{\ast}\big(\frac{|\beta_3+\tau_3|}{2sL^3}\big)}  \\ 
&& \quad\leq e^{\lambda L^3} e^{\lambda L^2 \varphi^{\ast}\big(\frac{|\beta|}{\lambda L^2}\big)} e^{sL^3\varphi^{\ast}\big(\frac{|\tau|}{sL^3}\big)} e^{sL^3\varphi^{\ast}\big(\frac{|\epsilon|}{sL^3}\big)}. 
\end{eqnarray*} 
Since the selection of $n$ and $s$ depends on $\lambda$, we get this new estimate, for a constant $C'_{\lambda}>0$: 
\begin{eqnarray} 
\lefteqn{\big|D^{\beta}_{x} \big((A_{\frac1{k},\chi} - A_{\frac1{l},\chi} )(f)\big)(x)\big|}\label{seriess}\\ 
&& \nonumber\le C'_{\lambda} \Big|\frac1{k}-\frac1{l}\Big| \sum_{\epsilon,\tau \in \N_0^{d}} e^{-nC_1 \varphi^{\ast}\big(\frac{|\epsilon|}{nC_1}\big)} e^{-nC_1 \varphi^{\ast}\big(\frac{|\tau|}{nC_1}\big)} \sum_{\substack{\epsilon_1+\epsilon_2+\epsilon_3=\epsilon \\ \tau_1+\tau_2+\tau_3=\tau, \,\tau_{1}\le \beta_{1} \\ \beta_1+\beta_2+\beta_3=\beta}} \frac{\epsilon!}{\epsilon_1! \epsilon_2! \epsilon_3!} \frac{\tau!}{\tau_1! \tau_2! \tau_3!} \frac{\beta!}{\beta_1! \beta_2! \beta_3!} \times \\ 
&& \nonumber\ \ \ \times e^{\lambda L^2 \varphi^{\ast}\big(\frac{|\beta|}{\lambda L^2}\big)} e^{sL^3\varphi^{\ast}\big(\frac{|\tau|}{sL^3}\big)} e^{sL^3\varphi^{\ast}\big(\frac{|\epsilon|}{sL^3}\big)}|x| e^{-\lambda \omega(x)} \Big( \int e^{-\omega(y)} dy \Big) \Big( \int |\xi| e^{-\omega(\xi)} d\xi \Big). 
\end{eqnarray} 
Again by Lemma~\ref{lemmafistrella}, using multinomial coefficients, we obtain 
\begin{eqnarray*} 
&&\sum_{\substack{\epsilon_1+\epsilon_2+\epsilon_3=\epsilon \\ \tau_1+\tau_2+\tau_3=\tau,\,\tau_{1}\le \beta_{1} \\ \beta_1+\beta_2+\beta_3=\beta}} \frac{\epsilon!}{\epsilon_1! \epsilon_2! \epsilon_3!} \frac{\tau!}{\tau_1! \tau_2! \tau_3!} \frac{\beta!}{\beta_1! \beta_2! \beta_3!} e^{\lambda L^2 \varphi^{\ast}\big(\frac{|\beta|}{\lambda L^2}\big)} e^{sL^3\varphi^{\ast}\big(\frac{|\tau|}{sL^3}\big)} e^{sL^3\varphi^{\ast}\big(\frac{|\epsilon|}{sL^3}\big)} \\ 
&&\ \ \ \ \ \ \ \ \quad =3^{|\epsilon+\tau+\beta|}e^{\lambda L^2 \varphi^{\ast}\big(\frac{|\beta|}{\lambda L^2}\big)} e^{sL^3\varphi^{\ast}\big(\frac{|\tau|}{sL^3}\big)} e^{sL^3\varphi^{\ast}\big(\frac{|\epsilon|}{sL^3}\big)}\\ 
&&\ \ \ \ \ \ \ \ \quad \leq e^{(\lambda +2sL)(L + L^2)} e^{\lambda \varphi^{\ast}\big(\frac{|\beta|}{\lambda}\big)} e^{sL \varphi^{\ast}\big(\frac{|\tau|}{sL}\big)} e^{sL \varphi^{\ast}\big(\frac{|\epsilon|}{sL}\big)}. 
\end{eqnarray*} 

Now, we see that the series in \eqref{series} converge. We treat the sum in $\epsilon$. Since $s \geq nC_1$, we have, for each $\epsilon\in\N_{0}^{d},$ by \eqref{EqLLambdaVarphi}, 
\begin{eqnarray} \label{series1}
e^{-nC_1 \varphi^{\ast}\big(\frac{|\epsilon|}{nC_1}\big)} e^{sL \varphi^{\ast}\big(\frac{|\epsilon|}{sL}\big)} 
&=& \Big(\frac1{e}\Big)^{|\epsilon|} e^{-nC_1 \varphi^{\ast}\big(\frac{|\epsilon|}{nC_1}\big)} e^{|\epsilon| + sL \varphi^{\ast}\big(\frac{|\epsilon|}{sL}\big)} \\ 
&\leq& e^{sL} \Big(\frac1{e}\Big)^{|\epsilon|} e^{-nC_1 \varphi^{\ast}\big(\frac{|\epsilon|}{nC_1}\big)} e^{s \varphi^{\ast}\big(\frac{|\epsilon|}{s}\big)} \leq e^{sL} \Big(\frac1{e}\Big)^{|\epsilon|}. \nonumber
\end{eqnarray} 
By formula \cite[(0.3.16)]{Nic_Rod2010global}, we have 
\[ \# \{ \epsilon \in \mathbb{N}_0^d: |\epsilon| = j\} = \binom{j+d-1}{d-1}. \] 
Then, we deduce 
\begin{eqnarray} \label{series2}
\sum_{\epsilon \in\N_{0}^{d}} \Big(\frac1{e}\Big)^{|\epsilon|} &=& \sum_{j=0}^{\infty} \sum_{|\epsilon|=j} \Big(\frac1{e}\Big)^j = \sum_{j=0}^{\infty} \Big(\frac1{e}\Big)^j \binom{j+d-1}{d-1} \\ 
&\leq&  2^{d-1} \sum_{j=0}^{\infty} \Big(\frac{2}{e}\Big)^j < +\infty.\nonumber 
\end{eqnarray} 
The convergence of the series in $\tau$ follows in the same way. 

Finally, we get 
\begin{eqnarray}\label{case2} 
\big|D^{\beta}_{x} \big((A_{\frac1{k},\chi} - A_{\frac1{l},\chi})(f)\big)(x)\big| &\leq& C_{\lambda} \big| \frac1{k} - \frac1{l}\big| \Big( \sum_{j=0}^{\infty} \big(\frac{2}{e}\big)^j \Big)^2 e^{\lambda \varphi^{\ast}\big( \frac{|\beta|}{\lambda}\big)} \times \\ 
&&\ \ \ \ \times |x| e^{-\lambda \omega(x)} \Big( \int e^{-\omega(y)} dy \Big) \Big( \int |\xi| e^{-\omega(\xi)} d\xi \Big),\nonumber 
\end{eqnarray} for some constant $C_{\lambda}>0$ depending on $\lambda$.

From \eqref{case2} we conclude that 
$$ 
\Big|\big(A_{\frac1{k},\chi} - A_{\frac1{l},\chi} \big)(f)\Big|_{\lambda}\longrightarrow 0, \mbox{ as }k,l\to +\infty, 
$$ 
for each $\lambda>0$ and, hence, $\{A_{\frac1{n},\chi}(f)\}_{n\in\N}$ is a Cauchy sequence in $\Sch_{\omega}(\R^{d}).$ 
\end{proof} 

\begin{lema}\label{previopseudo} 
Given an amplitude $a(x,y,\xi)\in \GA_{\rho}^{m,\omega}$ and $f\in\Sch_{\omega}(\R^{d})$, for each $\lambda>0$ there is $C_{\lambda}>0$ such that for all $x,\xi\in\R^{d}$, we have 
\[ \big| \int_{\mathbb{R}^d} e^{i(x-y)\xi} a(x,y,\xi) f(y) dy \big| \leq C_{\lambda} e^{-\lambda \omega(\xi)} e^{m\omega(x)}. \] 
\end{lema}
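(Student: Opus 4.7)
\medskip

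\noindent\emph{Proof plan.} The goal is to extract the decay $e^{-\lambda\omega(\xi)}$ from the oscillatory factor. The strategy, following the pattern of the preceding proposition, is to use the ultradifferential operator $G^n(D)$ of Theorem~\ref{TheoLangenbruchSeveralVariables} to transfer $\xi$-powers to $y$-derivatives via integration by parts, and then use the lower bound $|G^n(\xi)|^{-1}\leq C_4^n e^{-nC_2\omega(\xi)}$ from Corollary~\ref{CorEstimatesUltradifferentialOperator} to obtain the required decay by choosing $n$ sufficiently large depending on $\lambda$.

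The key identity is that $e^{i(x-y)\xi}=\frac{1}{G^n(\xi)}G^n(-D_y)e^{i(x-y)\xi}$, since a direct computation shows $G^n(-D_y)e^{i(x-y)\xi}=G^n(\xi)e^{i(x-y)\xi}$ (this uses $D_y e^{i(x-y)\xi}=-\xi e^{i(x-y)\xi}$ together with the series expansion $G^n(z)=\sum_\alpha b_\alpha z^\alpha$). Since $a(x,\cdot,\xi)f(\cdot)$ and all its $y$-derivatives are rapidly decreasing (the amplitude grows only polynomially in $y$ through the $\langle x-y\rangle^{\rho|\alpha|}$ factor and at most like $e^{m\omega(y)}$, while $f\in\Sch_\omega$), we may integrate by parts and move the operator onto $a(x,y,\xi)f(y)$. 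Using Leibniz's formula, this gives
\[
\int_{\R^d}e^{i(x-y)\xi}a(x,y,\xi)f(y)dy
=\frac{1}{G^n(\xi)}\sum_{\alpha\in\N_0^d}b_\alpha\sum_{\alpha_1+\alpha_2=\alpha}\frac{\alpha!}{\alpha_1!\alpha_2!}\int_{\R^d}e^{i(x-y)\xi}D^{\alpha_1}_y a(x,y,\xi)D^{\alpha_2}_y f(y)dy,
\]
up to an overall sign.

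To estimate each summand, fix $\lambda>0$ and pick parameters $s\geq\lambda$ and $n\in\N$ to be chosen. We use: the bound $|b_\alpha|\leq e^{nC_1}e^{-nC_1\varphi^\ast(|\alpha|/(nC_1))}$ from \eqref{EqEstimationBAlpha}; the amplitude estimate $|D^{\alpha_1}_y a(x,y,\xi)|\leq E_s\,e^{s\rho\varphi^\ast(|\alpha_1|/s)}e^{m\omega(x)}e^{m\omega(y)}e^{m\omega(\xi)}$ (the quotient $\langle x-y\rangle^{\rho|\alpha_1|}/\langle(x,y,\xi)\rangle^{\rho|\alpha_1|}\leq 1$); the bound from Lemma~\ref{LemmaEquivalence} applied to $f$, namely $|D^{\alpha_2}_y f(y)|\leq E'_{s,\mu}\,e^{s\varphi^\ast(|\alpha_2|/s)}e^{-\mu\omega(y)}$ for any $\mu>0$; and finally the $G^n$-bound $|G^n(\xi)|^{-1}\leq C_4^n e^{-nC_2\omega(\xi)}$.

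Choosing $n\geq (\lambda+m)/C_2$ converts the factor $e^{(m-nC_2)\omega(\xi)}$ into $e^{-\lambda\omega(\xi)}$, and choosing $\mu\geq m+d+1$ makes $e^{(m-\mu)\omega(y)}$ integrable on $\R^d$ (using condition $(\gamma)$). Combining the inner Leibniz sum via Lemma~\ref{lemmafistrella}(2) yields $e^{s\varphi^\ast(|\alpha|/s)}\cdot 2^{|\alpha|}$, and choosing $s\geq nC_1$ (with a standard trick of absorbing $2^{|\alpha|}$ into $e^{|\alpha|}$ as in \eqref{series1}) ensures
\[
\sum_{\alpha\in\N_0^d}e^{-nC_1\varphi^\ast(|\alpha|/(nC_1))}\,2^{|\alpha|}\,e^{s\varphi^\ast(|\alpha|/s)}<+\infty,
\]
via the counting argument \eqref{series2}. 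The remaining $\xi$-independent constants are absorbed into $C_\lambda$, giving the desired bound. The main technical point is to coordinate the three parameters $n$, $s$, $\mu$ so that all exponential factors balance correctly and the double series converges; this is routine given the preceding estimates.
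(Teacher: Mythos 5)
Your proof is correct and follows essentially the same route as the paper's: the paper likewise writes $e^{i(x-y)\xi}=G^n(\xi)^{-1}G^n(-D_y)e^{i(x-y)\xi}$, integrates by parts to put $G^n(D_y)$ onto $a(x,y,\xi)f(y)$, expands by Leibniz with the coefficients $b_\tau$, and then simply refers back to the estimates of Proposition~\ref{LemmaIteratedIntegralCauchy} to choose $n$ (and the auxiliary parameters) and conclude. The only slip is that $\langle x-y\rangle/\langle(x,y,\xi)\rangle\le\sqrt{2}$ rather than $\le 1$ (Lemma~\ref{LemmaCorcheteIneq}), so the amplitude bound carries an extra factor $\sqrt{2}^{\rho|\alpha_1|}$; this is harmless, since it is absorbed exactly like the $2^{|\alpha|}$ factor you already handle via the mechanism of \eqref{series1}.
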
 

\begin{proof} 
We follow the ideas of the proof of Proposition~\ref{LemmaIteratedIntegralCauchy} and use a suitable integration by parts in the integral \begin{equation} 
\int e^{i(x-y)\xi} a(x,y,\xi) f(y) dy.\label{integral-en-y} 
\end{equation} 
Here, we consider the formula \[ e^{i(x-y)\xi} = \frac1{G(\xi)} G(-D_y) e^{i(x-y)\xi},\] 
which is also true for a suitable power of $G(D)$, say $G^n(D)$, with $n \in \mathbb{N}$ to be determined. Integration by parts yields that the integrand in \eqref{integral-en-y} is equal to 
\begin{eqnarray*} 
\lefteqn{e^{i(x-y)\xi} \frac1{G^n(\xi)} G^n(D_y) \Big( a(x,y,\xi) f(y) \Big) }\\ 
&& = e^{i(x-y)\xi} \frac1{G^n(\xi)} \sum_{\tau \in\N_{0}^{d}} b_{\tau} \sum_{\tau_1+\tau_2=\tau} \frac{\tau!}{\tau_1! \tau_2!} D^{\tau_1}_y a(x,y,\xi) D^{\tau_2}_y f(y). 
\end{eqnarray*} 
Now, proceeding in a similar way to that of Proposition~\ref{LemmaIteratedIntegralCauchy} we get the conclusion. 
\end{proof} 

Applying the definition of amplitude we  show the following

\begin{lema}\label{lema-BS}
Given an amplitude $a(x,y,\xi) \in \GA^{m,\omega}_{\rho}$ and $\chi \in \mathcal{S}_{\omega}\big(\mathbb{R}^{2d}\big)$, we denote
\[ K(x,y) := \int_{\mathbb{R}^d} e^{i(x-y)\xi} a(x,y,\xi) \chi(x,\xi) d\xi. \] We have
\begin{enumerate}
\item[a)] $K(x,y) \in \mathcal{S}_{\omega}\big(\mathbb{R}^{2d}\big)$.
\item[b)] The linear operator $T: \mathcal{S}_{\omega}(\mathbb{R}^d) \to \mathcal{S}_{\omega}(\mathbb{R}^d)$ given by $T(f)(x) = \int K(x,y) f(y) dy$ is continuous.
\end{enumerate}
\end{lema}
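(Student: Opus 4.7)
The plan for (a) is to verify the characterization of $\Sch_{\omega}(\R^{2d})$ given in Lemma \ref{LemmaEquivalence}: for every $\lambda,\mu>0$ one must exhibit $D_{\lambda,\mu}>0$ such that
\[
|D^{\alpha}_{x}D^{\beta}_{y}K(x,y)| \leq D_{\lambda,\mu}\, e^{\lambda\varphi^{*}(|\alpha+\beta|/\lambda)}\, e^{-\mu(\omega(x)+\omega(y))}
\]
for all $\alpha,\beta\in\N_{0}^{d}$ and $(x,y)\in\R^{2d}$. This is equivalent, up to the structural constants of $\omega$, to the same bound with $e^{-\mu\omega((x,y))}$ on the right, since $\omega(x)+\omega(y)\leq 2\omega((x,y))$ and $\omega((x,y))\leq L(\omega(x)+\omega(y)+1)$ by \eqref{EqOmegaDiff}.

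For (a) I follow the pattern of Proposition \ref{LemmaIteratedIntegralCauchy}, with the simplification that no cut-off $\chi(\delta\cdot,\delta\cdot)$ and no limit $\delta\to 0$ are present. First I differentiate under the integral and expand $D^{\alpha}_{x}D^{\beta}_{y}\big[e^{i(x-y)\xi}a(x,y,\xi)\chi(x,\xi)\big]$ by Leibniz, producing a finite sum of terms containing monomials $\xi^{\gamma}$ together with derivatives of $a$ and $\chi$. The decisive step is integration by parts with a sufficiently large power of the ultradifferential operator $G(D)$ from Theorem \ref{TheoLangenbruchSeveralVariables}, using
\[
e^{i(x-y)\xi} = \frac{1}{G^{n}(y-x)}\,G^{n}(-D_{\xi})e^{i(x-y)\xi}.
\]
This transfers $G^{n}(-D_{\xi})$ (with coefficients $b_{\tau}$ controlled by \eqref{EqEstimationBAlpha}) onto $\xi^{\gamma}a\chi$ and produces the factor $1/G^{n}(y-x)$, whose derivatives are estimated by Corollary \ref{CorEstimatesUltradifferentialOperator} as $|D^{\tau}(1/G^{n})(y-x)|\leq C^{n}\tau!R^{-|\tau|}e^{-nK\omega(y-x)}$. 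Combining the amplitude bound (which contributes $e^{m\omega(x)+m\omega(y)+m\omega(\xi)}$ times factorial/$\varphi^{*}$-factors), the decay $e^{-nK\omega(y-x)}$, and the submultiplicativity $\omega(y)\leq L(\omega(y-x)+\omega(x)+1)$, a choice $nK/L>m+\mu$ converts $e^{-nK\omega(y-x)}e^{m\omega(y)}$ into $e^{-\mu\omega(y)}e^{c\omega(x)}$. The leftover $e^{c\omega(x)+m\omega(\xi)}$ as well as the integrability in $\xi$ are absorbed by the rapid decay of $\chi\in\Sch_{\omega}(\R^{2d})$, which also supplies the $e^{-\mu\omega(x)}$ factor. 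The main obstacle is the synchronized choice of $n$ (depending on $\mu,m,L,K$) and of the auxiliary seminorm parameter $s$ (governing the $\varphi^{*}$-terms from $a$, $\chi$ and the coefficients $b_{\tau}$), so that all accumulated $\varphi^{*}$-exponentials can be consolidated into a single factor $e^{\lambda\varphi^{*}(|\alpha+\beta|/\lambda)}$; this is achieved exactly by the combinatorial and geometric-convergence mechanism used in \eqref{series1}--\eqref{series2}, together with Lemma \ref{lemmafistrella}, Proposition \ref{PropTechnical}, and multinomial counting of the Leibniz sums.

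Part (b) then follows from (a) by routine estimation. Differentiation under the integral yields $D^{\alpha}T(f)(x)=\int D^{\alpha}_{x}K(x,y)f(y)\,dy$, and applying the bound from (a) together with $|f(y)|\leq|f|_{s}e^{-s\omega(y)}$ from Remark \ref{Notaflambda} gives
\[
|D^{\alpha}T(f)(x)| \leq D_{\lambda,\mu}\,e^{\lambda\varphi^{*}(|\alpha|/\lambda)}\,e^{-\mu\omega(x)}\,|f|_{s}\int e^{-(\mu+s)\omega(y)}\,dy.
\]
Choosing $\mu\geq\lambda$ (and any fixed $s>0$) makes the remaining integral finite by property $(\gamma)$ of $\omega$ and absorbs the factor $e^{\lambda\omega(x)}$ arising when passing to the seminorm $|T(f)|_{\lambda}$. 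We thus obtain $|T(f)|_{\lambda}\leq C_{\lambda}|f|_{s}$ for a fixed $s=s(\lambda)$, which establishes the continuity of $T:\Sch_{\omega}(\R^{d})\to\Sch_{\omega}(\R^{d})$.
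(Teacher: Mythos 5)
Your proposal is correct and follows essentially the same route as the paper: differentiate under the integral, expand by Leibniz, integrate by parts in $\xi$ with a power $G^{n}(-D_{\xi})$ of the ultradifferential operator from Theorem~\ref{TheoLangenbruchSeveralVariables} to produce the factor $1/G^{n}(y-x)$ and hence $e^{-nC_2\omega(y-x)}$ decay, trade that for decay in $y$ at the cost of growth in $x$ absorbed by $\chi$, and consolidate the $\varphi^{*}$-factors by the mechanism of Proposition~\ref{LemmaIteratedIntegralCauchy}; part b) is then the same routine seminorm estimate the paper gives. (The only cosmetic difference is that you invoke Corollary~\ref{CorEstimatesUltradifferentialOperator} for derivatives of $1/G^{n}(y-x)$, which are not actually needed here since the integration by parts is only in $\xi$; the zeroth-order bound suffices.)
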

\begin{nota}{\rm 
If the function $\chi \in \mathcal{S}_{\omega}(\mathbb{R}^d)$ only depends on $\xi$, we do not obtain a) $K \in \mathcal{S}_{\omega}\big(\mathbb{R}^{2d}\big)$ in the lemma above, but this weaker condition: For every $\lambda>0$ there is $C_{\lambda}>0$ such that for every $\alpha,\beta \in \mathbb{N}_0^d$ and every $x,y \in \mathbb{R}^d$, the function $K \in C^{\infty}\big(\mathbb{R}^{2d}\big)$ and satisfies
\[ |D^{\alpha}_x D^{\beta}_y K(x,y)| \leq C_{\lambda} e^{\lambda \varphi^{\ast}\big(\frac{|\alpha+\beta|}{\lambda}\big)} e^{m\omega(y)}. \] However, this is also sufficient to have that the integral operator $T(f)(x) = \int K(x,y) f(y) dy$ is continuous.}
\end{nota}
\begin{proof}[Proof of  Lemma~\ref{lema-BS}]
a) We fix $\alpha,\beta \in \mathbb{N}_0^d$ and calculate
\[ D^{\alpha}_x D^{\beta}_y K(x,y) = \sum_{\substack{\alpha_1 + \alpha_2 + \alpha_3 = \alpha \\ \beta_1 + \beta_2 = \beta}} \frac{\alpha!}{\alpha_1! \alpha_2! \alpha_3!} \frac{\beta!}{\beta_1! \beta_2!} (-1)^{\beta_1} \int e^{i(x-y)\xi} \xi^{\alpha_1+\beta_1} D^{\alpha_2}_x D^{\beta_2}_{y} a(x,y,\xi) D^{\alpha_3}_x \chi(x,\xi) d\xi. \] As in Proposition~\ref{LemmaIteratedIntegralCauchy}, we perform a suitable integration by parts with the formula
\[ e^{i(x-y)\xi} = \frac1{G^n(y-x)} G^n(-D_{\xi}) e^{i(x-y)\xi}, \] for some power $n \in \mathbb{N}$, to be determined, of the ultradifferential operator given in Theorem~\ref{TheoLangenbruchSeveralVariables}. From now on, the proof follows the lines of that of Proposition~\ref{LemmaIteratedIntegralCauchy}.

b) First, we observe that for $f\in\Sch_{\omega}(\R^{d})$, since $\varphi^{*}(0)=0$, we have, for any $\mu>0$,
$$
\sup_{y\in\R^{d}}|f(y)|\le \sup_{y\in\R^{d}}|f(y)| e^{\mu\omega(y)}\le \sup_{\beta\in\N_{0}^{d}}\sup_{y\in\R^{d}}|f^{(\beta)}(y)|e^{-\mu \varphi^{\ast}\big(\frac{|\beta|}{\mu}\big)}e^{\mu\omega(y)}=|f|_{\mu},
$$
being $|\cdot|_{\mu}$ the seminorm defined in Remark~\ref{Notaflambda}. Now, to prove that the operator $T$ is continuous, we  differentiate under the integral sign the function $T(f)(x)$ to obtain that for all $\lambda>0$, there exists $C_{\lambda}>0$ such that
\begin{eqnarray*} 
\lefteqn{|D^{\alpha}_x T(f)(x)| \leq \int |D^{\alpha}_x K(x,y)| |f(y)| dy }\\&& \le C_{\lambda} e^{\lambda \varphi^{\ast}\big(\frac{|\alpha|}{\lambda}\big)} e^{-\lambda\omega(x)} \int e^{-\lambda\omega(y)} |f(y)| dy\le  C_{\lambda} e^{\lambda \varphi^{\ast}\big(\frac{|\alpha|}{\lambda}\big)} e^{-\lambda\omega(x)} |f|_{\mu} \int e^{-\lambda\omega(y)}  dy,
\end{eqnarray*}
for any $\mu>0$, which gives the conclusion.
\end{proof}


\begin{theo}\label{pseudotheo} 
The operator $A:\Sch_\omega(\R^{d}) \to \Sch_\omega(\R^{d})$ given by the iterated integral
\begin{equation} 
A(f)(x) := \int_{\mathbb{R}^d} \Big(\int_{\mathbb{R}^d} e^{i(x-y)\xi} a(x,y,\xi) f(y) dy \Big) d\xi\label{iterated} 
\end{equation} 
is well defined, linear and continuous. 
\end{theo}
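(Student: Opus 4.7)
The plan is to use Proposition~\ref{LemmaIteratedIntegralCauchy} together with the completeness of $\Sch_\omega(\R^d)$ to define $A(f)$ as a limit in $\Sch_\omega(\R^d)$, and then to identify this limit with the iterated integral via Lemma~\ref{previopseudo} and a pointwise convergence argument.

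First I would show that the iterated integral in \eqref{iterated} makes sense pointwise. Fix $f\in\Sch_\omega(\R^d)$ and $x\in\R^d$. By Lemma~\ref{previopseudo}, for each $\lambda>0$ there exists $C_\lambda>0$ such that
\[ \Big| \int_{\R^d} e^{i(x-y)\xi} a(x,y,\xi) f(y)\, dy \Big| \leq C_\lambda e^{-\lambda \omega(\xi)} e^{m\omega(x)}. \]
Choosing $\lambda$ large enough that $e^{-\lambda\omega(\xi)}$ is integrable in $\xi$ (which is possible because $\omega$ satisfies condition $(\gamma)$, so $\omega(\xi)$ dominates $(1+d)\log\langle\xi\rangle$ up to a constant), the outer integral converges absolutely and defines $A(f)(x)$ as a measurable function of $x$. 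Linearity is immediate.

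Next I would pick any $\chi\in\Sch_\omega(\R^{2d})$ with $\chi(0,0)=1$ and consider the regularized operators $A_{1/n,\chi}(f)$ defined in \eqref{oscilatoria}. By Proposition~\ref{LemmaIteratedIntegralCauchy}, the sequence $(A_{1/n,\chi}(f))_n$ is Cauchy in $\Sch_\omega(\R^d)$. Since $\Sch_\omega(\R^d)$ is a Fr\'echet space, the sequence converges to some $g\in\Sch_\omega(\R^d)$. To identify $g$ with $A(f)$, I would apply dominated convergence: for each fixed $x$, the kernel of $A_{1/n,\chi}(f)$ in the iterated form is
\[ \int_{\R^d} e^{i(x-y)\xi} a(x,y,\xi)\, \chi(x/n,\xi/n)\, f(y)\, dy, \]
which by the proof of Lemma~\ref{previopseudo} admits a bound by $C_\lambda e^{-\lambda\omega(\xi)} e^{m\omega(x)}$ uniformly in $n$ (the factor $\chi(x/n,\xi/n)$ is uniformly bounded since $\chi\in\Sch_\omega\subset L^\infty$). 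The pointwise limit in $(y,\xi)$ of the integrand is obtained by $\chi(x/n,\xi/n)\to\chi(0,0)=1$, so dominated convergence first in $y$ and then in $\xi$ yields $A_{1/n,\chi}(f)(x)\to A(f)(x)$ pointwise. Since convergence in $\Sch_\omega$ implies pointwise convergence, $g(x)=A(f)(x)$ for every $x$, hence $A(f)=g\in\Sch_\omega(\R^d)$.

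Finally, for continuity I would argue by Banach-Steinhaus. Each operator $A_{1/n,\chi}$ is continuous from $\Sch_\omega(\R^d)$ to itself: its kernel $K_n(x,y)=\int e^{i(x-y)\xi} a(x,y,\xi)\chi(x/n,\xi/n)\,d\xi$ belongs to $\Sch_\omega(\R^{2d})$ by part (a) of Lemma~\ref{lema-BS} (applied with amplitude $a(x,y,\xi)\chi(x/n,\xi/n)\in\GA_\rho^{m,\omega}$, possibly after enlarging $m$), and part (b) of the same lemma then gives the continuity of $f\mapsto A_{1/n,\chi}(f)$. Since the pointwise limit $A(f)=\lim_n A_{1/n,\chi}(f)$ exists in $\Sch_\omega(\R^d)$ for every $f$ and $\Sch_\omega(\R^d)$ is Fr\'echet, the Banach-Steinhaus theorem implies that $A$ is continuous. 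Alternatively, I would revisit the estimates obtained in the proof of Proposition~\ref{LemmaIteratedIntegralCauchy}, noting that all occurrences of $f$ and its derivatives enter only through seminorms $|f|_s$; letting $k,l\to\infty$ in \eqref{case2}, keeping track of the dependence on $f$, produces a direct bound $|A(f)|_\lambda\leq C'_\lambda |f|_{s(\lambda)}$ for some $s(\lambda)>0$, which yields continuity constructively.

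The main obstacle is the passage from pointwise convergence of $A_{1/n,\chi}(f)(x)$ to the identification with the $\Sch_\omega$-limit; this requires care in verifying the hypotheses of dominated convergence uniformly in $n$, using the bounds from Lemma~\ref{previopseudo} and the boundedness of $\chi$. Once this identification is secured, completeness of $\Sch_\omega(\R^d)$ and either Banach-Steinhaus or a direct seminorm estimate give the remaining conclusions without further difficulty.
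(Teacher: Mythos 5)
Your proposal is correct and follows essentially the same route as the paper: Proposition~\ref{LemmaIteratedIntegralCauchy} plus completeness of $\Sch_\omega(\R^d)$ to get the limit, Lemma~\ref{lema-BS} for continuity of each $A_{1/n,\chi}$, Banach--Steinhaus for continuity of the limit operator, and Lemma~\ref{previopseudo} with dominated convergence to identify the limit with the iterated integral. The only cosmetic difference is that Lemma~\ref{lema-BS} already takes the cut-off $\chi$ as a separate argument, so there is no need to absorb $\chi(x/n,\xi/n)$ into the amplitude or enlarge $m$.
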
 
\begin{proof} 
As in \eqref{oscilatoria}, we fix $\chi\in\Sch_{\omega}(\R^{2d})$ such that $\chi(0,0)=1$. Since $\Sch_{\omega}(\R^{d})$ is a Fr\'echet space, for every $f\in\Sch_{\omega}(\R^{d})$ the sequence $\{ A_{\frac1{n},\chi}(f) \}_{n \in \mathbb{N}}$ converges in $\Sch_{\omega}(\R^{d})$ by Proposition~\ref{LemmaIteratedIntegralCauchy}. Moreover,  the operator $A_{\frac1{n},\chi}:\Sch_{\omega}(\R^{d})\to \Sch_{\omega}(\R^{d})$ is linear and, by Lemma~\ref{lema-BS},   well defined and continuous for every $n\in\N$.  We denote by $A_{\chi}$ the operator given by the limit: 
\[ A_{\chi}(f) := \lim_{n \to \infty} \int_{\mathbb{R}^d} \int_{\mathbb{R}^d} e^{i(x-y)\xi} a(x,y,\xi) f(y) \chi\big(\frac1{n}(x,\xi)\big) dy d\xi, \ \ f\in\Sch_{\omega}(\R^{d}). \] This operator is well defined and linear from $\Sch_{\omega}(\R^{d})$ to $\Sch_{\omega}(\R^{d})$ by Proposition~\ref{LemmaIteratedIntegralCauchy}. Moreover, it is continuous by Banach-Steinhaus theorem. 

Now, we prove formula \eqref{iterated} and, hence, that $I_{\chi}$ does not depend on the selection of $\chi\in \Sch_{\omega}(\R^{2d})$ with $\chi(0,0)=1$. By Lemma~\ref{previopseudo} we have, for all $n\in\N$, 
\begin{eqnarray*} 
\Big| \int e^{i(x-y)\xi} a(x,y,\xi) f(y) \chi\big(\frac1{n}x, \frac1{n}\xi\big) dy \Big| &=& \big|\chi\big(\frac1{n}x, \frac1{n}\xi\big)\big| \Big|\int e^{i(x-y)\xi} a(x,y,\xi) f(y) dy\Big| \\ 
&\leq& C_{\lambda} e^{-\lambda \omega(\xi)} e^{m\omega(x)} \Big(\sup_{\eta \in \mathbb{R}^{2d}} |\chi(\eta)|\Big), 
\end{eqnarray*} which is integrable in $\xi$. Moreover, 
\[ \int e^{i(x-y)\xi} a(x,y,\xi) f(y) \chi\big(\frac1{n}x,\frac1{n}\xi\big) dy \longrightarrow \int e^{i(x-y)\xi} a(x,y,\xi) f(y) dy \] pointwise on $x,\xi \in \mathbb{R}^d$ when $n$ goes to infinity. An application of Lebesgue theorem gives the conclusion. 
\end{proof} 

\begin{defin} 
\label{pseudodef} 
The operator $A$ given in Theorem~\ref{pseudotheo} is called {\em global $\omega$-pseudodifferential operator} associated to the amplitude $a(x,y,\xi)$. 
\end{defin} 

\begin{nota} 
In the hypothesis of Proposition~\ref{LemmaIteratedIntegralCauchy} we can also use a function $\chi \in \mathcal{S}_{\omega}(\mathbb{R}^d)$ which only depends on the variable $\xi$. In the same manner, Theorem~\ref{pseudotheo} is also true if we consider $\chi \in \mathcal{S}_{\omega}(\mathbb{R}^d)$ depending only on $\xi$ that satisfies $\chi(0)=1$. The proofs of both results follow in the same way. 
\end{nota} 

The use of amplitudes permits to extend the operator to the space of ultradistributions in an easy way by duality, as we can see in the next result. We omit its proof since is similar to the one of \cite[Theorem 2.5]{FGJ2005pseudo}.

\begin{prop}\label{LemmaExtensionPseudoDifferentialOperator}
The pseudodifferential operator $A:\mathcal{S}_{\omega}(\R^{d}) \to \mathcal{S}_{\omega}(\R^{d})$ extends to a linear and continuous operator  $\widetilde{A}:\mathcal{S}'_{\omega}(\R^{d})\to \Sch_{\omega}'(\R^{d})$.
\end{prop}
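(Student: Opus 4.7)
The plan is to extend $A$ by duality through its formal transpose. The starting observation is that the amplitude class $\GA_{\rho}^{m,\omega}$ is stable under the transformation $b(x,y,\xi):=a(y,x,-\xi)$: the estimates in Definition~\ref{amplitude} are symmetric in $x,y$ (note $\langle x-y\rangle=\langle y-x\rangle$ and $\langle(x,y,\xi)\rangle=\langle(y,x,-\xi)\rangle$) and the weight $\omega$ is radial, so $b\in\GA_{\rho}^{m,\omega}$. Hence, by Theorem~\ref{pseudotheo}, the operator
\[
A^{t}(g)(y):=\int_{\R^{d}}\int_{\R^{d}}e^{i(x-y)\xi}a(x,y,\xi)g(x)\,dx\,d\xi
=\int_{\R^{d}}\int_{\R^{d}}e^{i(y-x)\eta}b(y,x,\eta)g(x)\,dx\,d\eta
\]
(after the change $\eta=-\xi$) is a well-defined, linear and continuous operator from $\Sch_{\omega}(\R^{d})$ into itself.

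Next, I would define $\widetilde{A}:\Sch'_{\omega}(\R^{d})\to\Sch'_{\omega}(\R^{d})$ by
\[
\langle \widetilde{A}u,\varphi\rangle := \langle u, A^{t}\varphi\rangle,\qquad u\in\Sch'_{\omega}(\R^{d}),\ \varphi\in\Sch_{\omega}(\R^{d}).
\]
This makes sense because $A^{t}\varphi\in\Sch_{\omega}(\R^{d})$, and continuity of $\widetilde{A}$ (with respect to either the weak-$\ast$ or the strong topology on $\Sch'_{\omega}$) follows automatically from the continuity of $A^{t}$: if a net $u_{\alpha}\to u$ in $\Sch'_{\omega}$, then $\langle u_{\alpha},A^{t}\varphi\rangle\to\langle u,A^{t}\varphi\rangle$ for each $\varphi$, i.e.\ $\widetilde{A}u_{\alpha}\to\widetilde{A}u$; strong continuity is analogous since $A^{t}$ maps bounded sets to bounded sets.

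The only non-formal point is to check that $\widetilde{A}$ actually extends $A$, i.e.\ that for $f\in\Sch_{\omega}(\R^{d})$ one has $\langle Af,\varphi\rangle=\langle f,A^{t}\varphi\rangle$ for every $\varphi\in\Sch_{\omega}(\R^{d})$. This is the step that requires care, because it amounts to a Fubini-type exchange of a $y$-integral, an $x$-integral and an oscillatory $\xi$-integral that is only conditionally convergent. I would handle this by writing both sides through the regularized operators $A_{1/n,\chi}$ introduced in \eqref{oscilatoria} with $\chi\in\Sch_{\omega}(\R^{2d})$, $\chi(0,0)=1$. For the regularized version, Fubini applies directly (the integrand is absolutely integrable in $(x,y,\xi)$ thanks to $\chi(x/n,\xi/n)$ and the bounds on $a$, $f$, $\varphi$), giving
\[
\int_{\R^{d}}A_{1/n,\chi}(f)(x)\,\varphi(x)\,dx=\int_{\R^{d}}f(y)\,A^{t}_{1/n,\chi}(\varphi)(y)\,dy,
\]
where $A^{t}_{1/n,\chi}$ is the analogous regularization of $A^{t}$. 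Then Proposition~\ref{LemmaIteratedIntegralCauchy}, applied to both $a$ and its transpose amplitude $b$, allows me to pass to the limit as $n\to\infty$ on each side in $\Sch_{\omega}(\R^{d})$, yielding $\langle Af,\varphi\rangle=\langle f,A^{t}\varphi\rangle=\langle\widetilde{A}f,\varphi\rangle$.

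The main obstacle is precisely this limit passage, since one must ensure that the regularized transpose $A^{t}_{1/n,\chi}$ converges to $A^{t}$ in the same sense the regularization for $A$ does. This is, however, exactly what Proposition~\ref{LemmaIteratedIntegralCauchy} supplies for an arbitrary amplitude in $\GA_{\rho}^{m,\omega}$, and the symmetry of the class established at the very beginning guarantees that $b\in\GA_{\rho}^{m,\omega}$, so the same proposition applies to $A^{t}$. Everything else (linearity of $\widetilde{A}$, uniqueness of the extension, weak-$\ast$/strong continuity) is a standard consequence of defining an extension by duality from a continuous operator on $\Sch_{\omega}(\R^{d})$.
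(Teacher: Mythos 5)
Your proof is correct and follows essentially the route the paper intends: the paper omits its own proof, deferring to \cite[Theorem 2.5]{FGJ2005pseudo}, and the duality argument through the transpose amplitude $a(y,x,-\xi)$ is exactly that route (indeed the paper records immediately after the proposition that $A^{t}|_{\Sch_\omega(\R^d)}$ is the pseudodifferential operator with this amplitude, and your observation that $\GA_{\rho}^{m,\omega}$ is stable under $(x,y,\xi)\mapsto(y,x,-\xi)$ is the needed ingredient). One small point to tidy: after applying Fubini to $\int A_{1/n,\chi}(f)\varphi\,dx$ the cutoff $\chi(x/n,\xi/n)$ ends up sitting in the \emph{integrated} space variable of the transposed operator rather than in its output variable, so Proposition~\ref{LemmaIteratedIntegralCauchy} does not literally apply to the object you call $A^{t}_{1/n,\chi}$; this is repaired either by choosing $\chi$ depending only on $\xi$ (explicitly permitted by the remark following Definition~\ref{pseudodef}, after which the two regularizations coincide up to $\xi\mapsto-\xi$) or by a direct dominated-convergence argument on the $\xi$-integral using the uniform bound of Lemma~\ref{previopseudo}.
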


As in \cite[Theorem 2.5]{FGJ2005pseudo}, we observe that for any pseudodifferential operator $A:\mathcal{S}_{\omega}(\R^{d}) \to \mathcal{S}_{\omega}(\R^{d})$ with amplitude $a(x,y,\xi)$, we have that the transpose operator when restricted to $\mathcal{S}_{\omega}(\R^{d}),$ $A^{t}|_{\Sch_\omega(\R^{d})}:\mathcal{S}_{\omega}(\R^{d}) \to \mathcal{S}_{\omega}(\R^{d})$, is a pseudodifferential operator with amplitude $a(y,x,-\xi)$.


The proof of the next result is standard.
\begin{prop}\label{PropExampleExtensionPseudodifferentialOperator}
Let $T:\mathcal{S}_{\omega}(\R^{d}) \to \mathcal{S}_{\omega}(\R^{d})$ be a pseudodifferential operator. The following assertions are equivalent:
\begin{enumerate}
\item[$(1)$] $T$ has a linear and continuous extension $\widetilde{T}:\mathcal{S}'_{\omega}(\R^{d}) \to \mathcal{S}_{\omega}(\R^{d})$;
\item[$(2)$] There exists $K(x,y) \in \mathcal{S}_{\omega}\big(\mathbb{R}^{2d}\big)$ such that
\[ (T\varphi)(x) = \int K(x,y) \varphi(y) dy, \qquad \varphi \in \mathcal{S}_{\omega}(\R^{d}). \]
\end{enumerate}
\end{prop}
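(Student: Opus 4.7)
The plan is to prove the two implications separately, as is standard for Schwartz kernel-type results.

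For the implication $(2) \Rightarrow (1)$, assume $K \in \mathcal{S}_{\omega}(\mathbb{R}^{2d})$ and define
\[
\widetilde{T}u(x) := \langle u, K(x,\cdot)\rangle, \qquad u \in \mathcal{S}'_{\omega}(\mathbb{R}^{d}).
\]
First I would fix $x$ and observe that, by Lemma~\ref{LemmaEquivalence} applied to $K$ in both variables separately, the partial function $K(x,\cdot)$ lies in $\mathcal{S}_{\omega}(\mathbb{R}^{d})$, so the pairing is well defined. Then, by differentiating under the dual bracket, $\widetilde{T}u$ is smooth in $x$, with $D^{\alpha}_{x}\widetilde{T}u(x) = \langle u, D^{\alpha}_{x}K(x,\cdot)\rangle$. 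Using the estimates
\[
|D^{\alpha}_{x}D^{\beta}_{y}K(x,y)| \leq C_{\lambda,\mu}\, e^{\lambda\varphi^{\ast}(|\alpha+\beta|/\lambda)}\,e^{-\mu\omega(x)}e^{-\mu\omega(y)},
\]
which hold for all $\lambda,\mu>0$ by Lemma~\ref{LemmaEquivalence}, one checks that $\widetilde{T}u$ satisfies the estimates in Remark~\ref{Notaflambda} and that $u \mapsto \widetilde{T}u$ is continuous from $\mathcal{S}'_{\omega}(\mathbb{R}^{d})$ into $\mathcal{S}_{\omega}(\mathbb{R}^{d})$ (continuity on bounded sets of $\mathcal{S}'_{\omega}$ suffices, since $\mathcal{S}_{\omega}$ is a Fréchet space). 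Finally, if $u=\varphi \in \mathcal{S}_{\omega}(\mathbb{R}^{d}) \hookrightarrow \mathcal{S}'_{\omega}(\mathbb{R}^{d})$, then $\widetilde{T}\varphi(x) = \int K(x,y)\varphi(y)dy = T\varphi(x)$, so $\widetilde{T}$ extends $T$.

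For the implication $(1) \Rightarrow (2)$, the approach is via the Schwartz kernel theorem adapted to $\mathcal{S}_{\omega}$. The continuous map $\widetilde{T}: \mathcal{S}'_{\omega}(\mathbb{R}^{d}) \to \mathcal{S}_{\omega}(\mathbb{R}^{d})$ produces a separately continuous bilinear form on $\mathcal{S}_{\omega}(\mathbb{R}^{d}) \times \mathcal{S}_{\omega}(\mathbb{R}^{d})$ by
\[
B(\psi,\varphi) := \langle \widetilde{T}\psi,\varphi\rangle,
\]
where $\psi \in \mathcal{S}_{\omega}(\mathbb{R}^{d})$ is identified with the ultradistribution $\varphi \mapsto \int \psi \varphi$. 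Since $\mathcal{S}_{\omega}(\mathbb{R}^{d})$ is nuclear and $\mathcal{S}_{\omega}(\mathbb{R}^{d})\,\widehat{\otimes}\,\mathcal{S}_{\omega}(\mathbb{R}^{d}) \cong \mathcal{S}_{\omega}(\mathbb{R}^{2d})$, the form $B$ corresponds to a unique kernel $K \in \mathcal{S}_{\omega}(\mathbb{R}^{2d})$ such that $B(\psi,\varphi) = \iint K(x,y)\psi(y)\varphi(x)\,dxdy$. Specializing $\psi = \varphi \in \mathcal{S}_{\omega}(\mathbb{R}^{d})$ and noting that $\widetilde{T}$ extends $T$ recovers the integral formula $T\varphi(x) = \int K(x,y)\varphi(y)dy$.

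The main obstacle is this last step: justifying the identification $\mathcal{S}_{\omega}\,\widehat{\otimes}\,\mathcal{S}_{\omega} \cong \mathcal{S}_{\omega}(\mathbb{R}^{2d})$ and the resulting kernel representation. For the Beurling ultradifferentiable setting, this nuclearity and the corresponding kernel theorem are available in the literature (they can be proved directly by expanding in a Hermite-type basis of $\mathcal{S}_{\omega}$, or by adapting the classical Schwartz theorem to the seminorms of Remark~\ref{Notaflambda}); this is exactly why the authors call the proof standard. Alternatively, one can bypass abstract nuclearity and construct $K$ explicitly as $K(x,y) := \widetilde{T}(\delta_{y})(x)$, verifying that $y \mapsto \delta_{y}$ lies in a suitable smooth family in $\mathcal{S}'_{\omega}$ and that the composed map produces an $\mathcal{S}_{\omega}(\mathbb{R}^{2d})$ function via the continuity of $\widetilde{T}$ together with the estimates for $\widetilde{T}u$ obtained in the first implication.
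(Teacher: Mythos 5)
The paper itself offers no proof of this proposition (it is declared standard), so there is nothing to compare routes against; judging your argument on its own terms, the implication $(2)\Rightarrow(1)$ is essentially correct. Two small points there: the estimate $|D^{\alpha}_{x}D^{\beta}_{y}K(x,y)|\leq C_{\lambda,\mu}e^{\lambda\varphi^{\ast}(|\alpha+\beta|/\lambda)}e^{-\mu\omega(x)}e^{-\mu\omega(y)}$ follows from $K\in\mathcal{S}_{\omega}(\mathbb{R}^{2d})$ and the monotonicity of $\omega$ (since $\omega(x),\omega(y)\leq\omega((x,y))$), not really from applying Lemma~\ref{LemmaEquivalence} ``in both variables separately''; and the continuity of $\widetilde{T}$ from boundedness is justified because $\mathcal{S}'_{\omega}$, being the strong dual of a Fr\'echet--Montel (hence reflexive) space, is bornological --- the fact that the target $\mathcal{S}_{\omega}$ is Fr\'echet is not the relevant hypothesis.

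The genuine gap is in your first route for $(1)\Rightarrow(2)$. A separately continuous bilinear form $B$ on $\mathcal{S}_{\omega}(\mathbb{R}^{d})\times\mathcal{S}_{\omega}(\mathbb{R}^{d})$ corresponds, via $\mathcal{S}_{\omega}\,\widehat{\otimes}\,\mathcal{S}_{\omega}\cong\mathcal{S}_{\omega}(\mathbb{R}^{2d})$, to an element of the \emph{dual} $\mathcal{S}'_{\omega}(\mathbb{R}^{2d})$, not to a kernel in $\mathcal{S}_{\omega}(\mathbb{R}^{2d})$: every continuous operator $\mathcal{S}_{\omega}\to\mathcal{S}'_{\omega}$ produces such a form, so restricting $\widetilde{T}$ to test functions throws away exactly the hypothesis $(1)$ you need. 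The correct abstract statement is the isomorphism $L_{b}(\mathcal{S}'_{\omega},\mathcal{S}_{\omega})\cong\mathcal{S}_{\omega}\,\widehat{\otimes}\,\mathcal{S}_{\omega}\cong\mathcal{S}_{\omega}(\mathbb{R}^{2d})$ for the space of operators, not for bilinear forms on $\mathcal{S}_{\omega}\times\mathcal{S}_{\omega}$. Your alternative route, $K(x,y):=\widetilde{T}(\delta_{y})(x)$, is the one that actually works and should be the argument: the key observation is that, by Remark~\ref{Notaflambda}, for each $\lambda>0$ the family $\bigl\{e^{-\lambda\varphi^{\ast}(|\beta|/\lambda)}e^{\lambda\omega(y)}D^{\beta}\delta_{y}:\ y\in\mathbb{R}^{d},\ \beta\in\mathbb{N}_{0}^{d}\bigr\}$ is equicontinuous, hence bounded, in $\mathcal{S}'_{\omega}(\mathbb{R}^{d})$; applying the bounded (equivalently continuous) map $\widetilde{T}$ and reading off $D^{\beta}_{y}K(x,y)=\pm\widetilde{T}(D^{\beta}\delta_{y})(x)$ yields precisely the $\mathcal{S}_{\omega}(\mathbb{R}^{2d})$ estimates of Lemma~\ref{LemmaEquivalence}, and the identity $T\varphi(x)=\int K(x,y)\varphi(y)\,dy$ follows by writing $\varphi=\int\varphi(y)\delta_{y}\,dy$ as a convergent vector-valued integral in $\mathcal{S}'_{\omega}$ and using continuity of $\widetilde{T}$. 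I recommend you commit to this construction and drop the bilinear-form detour.
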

\begin{defin}\label{regu}
A pseudodifferential operator $T:\mathcal{S}_{\omega}(\R^{d}) \to \mathcal{S}_{\omega}(\R^{d})$ that satisfies $(1)$ or $(2)$ of Proposition~\ref{PropExampleExtensionPseudodifferentialOperator} is called \emph{$\omega$-regularizing.}
\end{defin}

\begin{exam}\label{examples}{\rm 
(a) As in \cite[Example 2.11]{FGJ2005pseudo}, particular cases of weight functions give known definitions of symbol classes and pseudodifferential operators. For instance, in the limit case, that we do not consider here, of $\omega(t)=\log(1+t)$, we have  $\Sch_{\omega}(\R^{d})=\Sch(\R^{d})$. In this case, with a similar argument to the one of \cite[Example 2.11 (1)]{FGJ2005pseudo}, by \eqref{EqOmegaCorchetex}, we obtain that $a\in \GA_{\rho}^{m,\omega}$ if and only if for all $\alpha, \beta,\gamma\in\N_{0}^{d}$ there is $C>0$ such that
$$
\big| D^{\alpha}_{x} D^{\beta}_{y} D^{\gamma}_{\xi} a(x,y,\xi) \big| \leq C\langle x-y\rangle^{\rho|\alpha+\beta+\gamma|}\langle (x,y,\xi)\rangle^{m-\rho|\alpha+\beta+\gamma|}, 
$$
for all $x,y,\xi\in\R^{d}$. This characterization gives precisely \cite[Definition 23.3]{Shu2001pseudo} for $m'=0$. 

In the same way, using \cite[Example 2.11 (2)]{FGJ2005pseudo}, if $\omega(t)=t^{d}$ for $0<d<1$ is a Gevrey weight function then $a\in \GA_{\rho}^{m,\omega}$ if and only if for every $\lambda>0$ there is $C>0$ such that
\[ \big| D^{\alpha}_{x} D^{\beta}_{y} D^{\gamma}_{\xi} a(x,y,\xi) \big| \leq C \frac{\langle x-y \rangle^{\rho|\alpha + \beta + \gamma|}}{\langle(x,y,\xi)\rangle^{\rho|\alpha + \beta + \gamma|}} (\alpha!\beta!\gamma!)^{\rho/d}\lambda^{|\alpha+\beta+\gamma|}  e^{m\langle (x,y,\xi)\rangle^{d}}, \] for all $(x,y,\xi) \in \mathbb{R}^{3d}$ and $(\alpha,\beta,\gamma) \in \mathbb{N}_{0}^{3d}$. This definition of amplitude could be compared with \cite[Definition 2.1]{cap}, which is the corresponding definition for the Roumieu case. 

Finally, it is worth to mention also 
that in the case when the weight function satisfies \cite[Corollary 16 (3)]{BMM}, the classes of ultradifferentiable functions defined by weights (as in \cite{BMT}) and the ones defined by sequences (as in \cite{K}) coincide. In this situation, the definition given by Prangoski for the Beurling case in \cite[Definition 1]{Pran2013pseudo} is expected to be the same as our Definition~\ref{amplitude}. But, if the weight sequence $(M_{p})_{p}$ satisfies only condition (M2) of Komatsu, as is assumed by Prangoski~\cite{Pran2013pseudo}, our classes of amplitudes could differ in general from the ones given by Prangoski (see \cite[Example 17]{BMM}). Hence, we are treating, even only in the Beurling setting, different cases.
\vskip.3\baselineskip
(b) Let $\sigma$ be a weight function and let $\omega$ be another weight function satisfying $\omega\big(t^{\frac{1+\rho}{\rho}}\big) = O(\sigma(t))$ as $t \to \infty$, where $0 < \rho \leq 1$. If $a(x,\xi) \in \mathcal{S}_{\sigma}\big(\mathbb{R}^{2d}\big)$, then $a(x,\xi) \in\displaystyle \cap_{m\in\R}\textstyle\GS^{m,\omega}_{\rho}$. It is enough to prove it for $m<0$.
Indeed, for every $\lambda,m'>0$, there exists $C_{\lambda,m'}>0$ such that
\begin{equation*}
|D^{\alpha}_x D^{\beta}_{\xi} a(x,\xi)| \leq C_{\lambda,m'} e^{(\lambda+m') \varphi^{\ast}_{\sigma}\big(\frac{|\alpha+\beta|}{\lambda+m'}\big)} e^{(-\lambda-m')\sigma(\langle(x,\xi)\rangle)}\leq C_{\lambda,m'} e^{\lambda \varphi^{\ast}_{\sigma}\big(\frac{|\alpha+\beta|}{\lambda}\big)} e^{(-\lambda-m')\sigma(\langle(x,\xi)\rangle)},
\end{equation*} 
for each $(x,\xi) \in \mathbb{R}^{2d}$ and $(\alpha,\beta) \in \mathbb{N}_0^{2d}$ (in the last inequality we use that $x\mapsto \varphi^{*}(x)/x$ is increasing).
By assumption and Lemma~\ref{Lemma158TesisDavid}\,(2), there is $C>0$ such that for all $\lambda>0$ and $j \in \mathbb{N}_0$,
\begin{equation}\label{Eq2}
\lambda \varphi^{\ast}_{\sigma}\big(\frac{j}{\lambda}\big) \leq \lambda+ \lambda C\frac{\rho}{1+\rho}\varphi^{\ast}_{\omega}\big(\frac{j}{\lambda C}\big).
\end{equation} 
By \eqref{EqLemmaTechnical1}, we have
\begin{eqnarray*}
e^{\lambda \varphi^{\ast}_{\sigma}\big(\frac{|\alpha+\beta|}{\lambda}\big)} e^{-(\lambda+m') \sigma(\langle(x,\xi)\rangle)} &\leq& e^{\lambda(1+\rho) \varphi^{\ast}_{\sigma}\big(\frac{|\alpha+\beta|}{\lambda}\big)}  e^{-\lambda \rho \varphi^{\ast}_{\sigma}\big(\frac{|\alpha+\beta|}{\lambda}\big)} e^{-\lambda \rho \sigma(\langle(x,\xi)\rangle)} e^{-m' \sigma(\langle(x,\xi)\rangle)} \\
&\leq& e^{\lambda(1+\rho) \varphi^{\ast}_{\sigma}\big(\frac{|\alpha+\beta|}{\lambda}\big)} \langle(x,\xi)\rangle^{-\rho|\alpha+\beta|}e^{-m' \sigma(\langle(x,\xi)\rangle)}.
\end{eqnarray*} Now, formula~\eqref{Eq2} shows that $a \in \GS^{m,\omega}_{\rho}$ for $m=-m'/2<0$. 

On the other hand, it is easy to see also that $\bigcap_{m\in\R}\textstyle\GS^{m,\omega}_{\rho}\subseteq \mathcal{S}_{\omega}\big(\mathbb{R}^{2d}\big)$. So, we have 
$$
\mathcal{S}_{\sigma}\big(\mathbb{R}^{2d}\big)\subseteq \displaystyle \bigcap_{m\in\R}\textstyle\GS^{m,\omega}_{\rho}\subseteq \mathcal{S}_{\omega}\big(\mathbb{R}^{2d}\big),
$$
for every pair of weights $\omega$ and $\sigma$ that satisfies the relation at the beginning of this example.

We observe that for weights of the type $\omega(t)=\log^{s}(1+t)$, with $s\ge 1$ (remember that here we do not consider the limit case $s=1$), we have $\omega\big(t^{(1+\rho)/\rho}\big) = O(\omega(t))$ as $t \to \infty$ and, hence, in this particular case, we obtain
$$
\mathcal{S}_{\omega}\big(\mathbb{R}^{2d}\big)= \displaystyle \bigcap_{m\in\R}\textstyle\GS^{m,\omega}_{\rho}.
$$
\vskip.3\baselineskip
(c) We consider the differential operator $P(x,D) = \sum_{|\gamma| \leq m} a_{\gamma}(x) D^{\gamma},$ where $a_{\gamma}\in \Sch_{\sigma}(\R^{d})$ and $\sigma(t)=o(t)$ as $t$ tends to infinity. If $\omega$ is another weight function such that $\omega\big(t^{(1+\rho)/\rho}\big)=O(\sigma(t))$ as $t$ tends to infinity, by (b) it is easy to see that the corresponding symbol $p(x,\xi)=(2\pi)^{-d}\sum_{|\gamma| \leq m} a_{\gamma}(x) \xi^{\gamma}\in\GS^{m,\omega}_{\rho}$ is of {\em finite order}, i.e, we have polynomial growth in all the variables instead of exponential growth.

On the other hand, a linear partial differential operator with polynomial coefficients defines a global symbol of finite order in $\GS^{m,\omega}_{\rho}$ provided $\omega(t^{1/\rho})=o(t)$ as $t$ tends to infinity. 
\vskip.3\baselineskip
(d) Following \cite[Example 2.11 (5)]{FGJ2005pseudo}, we can consider ultradifferential operators with variable coefficients and infinite order $G(x,D):=\sum_{\alpha\in\N^{d}_{0}}a_{\alpha}(x)D^{\alpha}$ with  $a_{\alpha} \in C^{\infty}(\mathbb{R}^d)$  satisfying the following condition: there exists $m \in \mathbb{N}_0$ such that for all $\lambda>0$, there is $C_{\lambda}>0$ with \[ |D^{\beta} a_{\alpha}(x)| \leq C_{\lambda} e^{\lambda \rho \varphi^{\ast}\big(\frac{|\beta|}{\lambda}\big)} e^{-m\rho \varphi^{\ast}\big(\frac{|\beta|}{m}\big)} e^{-m \varphi^{\ast}\big(\frac{|\alpha|}{m}\big)} e^{m\omega(x)}, \] for each $\alpha,\beta \in \mathbb{N}_0^d$ and $x\in\R^{d}$.

It is not difficult to show that its corresponding symbol $p(x,\xi) := (2\pi)^{-d} \sum_{\alpha} a_{\alpha}(x) \xi^{\alpha}$ is a global symbol in $\GS_{\rho}^{k,\omega}$ for some $k\ge m$. 

In particular, by \eqref{EqEstimationAAlpha}, the ultradifferential operators $G(D)$ with constant coefficients defined in Section~\ref{SectionBackground} are pseudodifferential operators with symbol $G\in \GS_{\rho}^{k,\omega}$ for some $k>0$.

}\end{exam}

\section{Symbolic calculus}\label{section-calculus}

In order to compose two pseudodifferential operators, we need to develop a symbolic calculus in this setting. We follow the lines of \cite{FGJ2005pseudo}.
\begin{defin}\label{DefFormalSums}
We define $\FGS_{\rho}^{m,\omega}$ to be the set of all formal sums $\sum_{j \in \mathbb{N}_0} a_j(x,\xi)$ such that $a_j(x,\xi) \in C^{\infty}\big(\mathbb{R}^{2d}\big)$ and there is $R\geq 1$ such that for every $n>0$, there exists $D_n>0$ with
\begin{equation}\label{EqFormalSum}
\big|D^{\alpha}_x D^{\beta}_{\xi} a_j(x,\xi)\big| \leq D_n \Big( \frac1{\langle(x,\xi)\rangle} \Big)^{\rho(|\alpha+\beta|+j)} e^{n\rho \varphi^{\ast}\big(\frac{|\alpha+\beta|+j}{n} \big)} e^{m\omega(x)} e^{m\omega(\xi)},
\end{equation} 
for each $j \in \mathbb{N}_0$, $(\alpha,\beta) \in \mathbb{N}_{0}^{2d}$ and $\log\big(\frac{\langle(x,\xi)\rangle}{R}\big) \geq \frac{n}{j} \varphi^{\ast}\big(\frac{j}{n} \big)$.
\end{defin}
We can assume that $a_0(x,\xi)$ satisfies formula~\eqref{EqFormalSum} when $\log\big(\frac{\langle(x,\xi)\rangle}{R}\big) \geq 0$, i.e., when $\langle (x,\xi)\rangle \ge R$.

Let $a$ be a symbol in $\GS_{\rho}^{m,\omega}$ and set $a_0:=a$ and $a_j=0$ for $j \neq 0$. Then, we can regard  $a$ as the formal sum $\sum a_j$.

\begin{defin}\label{DefEqFormalSums}
Two formal sums $\sum a_j$ and $\sum b_j$ in $\FGS_{\rho}^{m,\omega}$ are said to be equivalent, which is denoted by $\sum a_j \sim \sum b_j$, if there is $R\geq 1$ such that for each natural number $n$, there exist $D_n>0$, $N_n \in \mathbb{N}$ with
\begin{equation}\label{EqFormalSumEquivalent}
\Big|D^{\alpha}_x D^{\beta}_{\xi} \sum_{j < N}(a_j - b_j)\Big| \leq D_n \Big( \frac1{\langle(x,\xi)\rangle} \Big)^{\rho(|\alpha+\beta|+N)} e^{n\rho \varphi^{\ast}\big(\frac{|\alpha+\beta|+N}{n}\big)} e^{m\omega(x)} e^{m\omega(\xi)},
\end{equation} for every $N \geq N_n$, $(\alpha,\beta) \in \mathbb{N}_{0}^{2d}$ and $\log\big(\frac{\langle(x,\xi)\rangle}{R}\big) \geq \frac{n}{N} \varphi^{\ast}\big(\frac{N}{n} \big)$.
\end{defin}

We understand that a symbol $a\in\GS^{m,\omega}_{\rho}$ regarded as a formal sum satisfies $a\sim 0$ when $\big|D^{\alpha}_x D^{\beta}_{\xi} a(x,\xi)\big|$ is estimated by the right-hand side of~\eqref{EqFormalSumEquivalent} for every $N \geq N_n$, $(\alpha,\beta) \in \mathbb{N}_0^{2d}$ and $\log\big(\frac{\langle(x,\xi)\rangle}{R}\big) \geq \frac{n}{N}\varphi^{\ast}\big(\frac{N}{n}\big)$.
The following proposition gives a sufficient condition for a pseudodifferential operator to be $\omega$-regularizing in terms of formal sums (see  Definition~\ref{regu}):

\begin{prop}\label{regu-equiv0}
If $A$ is a pseudodifferential operator defined by a symbol $a(x,\xi)$ which is equivalent to zero, then $A$ is an $\omega$-regularizing operator.
\end{prop}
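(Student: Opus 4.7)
The plan is to prove $A$ is $\omega$-regularizing via condition $(2)$ of Proposition~\ref{PropExampleExtensionPseudodifferentialOperator}, i.e., by exhibiting a kernel $K \in \Sch_\omega(\R^{2d})$ with $(A\varphi)(x)=\int K(x,y)\varphi(y)\,dy$. Since the amplitude is $a(x,\xi)$ (independent of $y$), the natural candidate is the oscillatory integral
\[ K(x,y) = \int_{\R^d} e^{i(x-y)\xi}\, a(x,\xi)\, d\xi, \]
whose meaning is fixed by Theorem~\ref{pseudotheo}. By Lemma~\ref{LemmaEquivalence}, $K\in\Sch_\omega(\R^{2d})$ is equivalent to showing that for every $\lambda,\mu>0$ there exists $C_{\lambda,\mu}>0$ such that
\[ \bigl|D^{\alpha}_x D^{\beta}_y K(x,y)\bigr| \le C_{\lambda,\mu}\, e^{\lambda \varphi^{\ast}(|\alpha+\beta|/\lambda)}\, e^{-\mu\omega(x,y)}, \qquad (\alpha,\beta)\in\N_0^{2d}. \]
Using \eqref{EqOmegaDiff} and the radial extension, $\omega(x,y)\le L'(\omega(x-y)+\omega(x))+L'$, so it suffices to produce simultaneous decay of type $e^{-\mu\omega(x-y)}$ and $e^{-\mu\omega(x)}$.

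The technical core is an integration by parts in the oscillatory integral driven by the ultradifferential operator $G(D)$ from Theorem~\ref{TheoLangenbruchSeveralVariables}, exactly as in Proposition~\ref{LemmaIteratedIntegralCauchy} and Lemma~\ref{lema-BS}. After differentiating under the integral sign and applying Leibniz to distribute the $y$-derivatives onto the $\xi^{\beta}$ factors produced by $D_y^{\beta}e^{i(x-y)\xi}=(-\xi)^{\beta}e^{i(x-y)\xi}$, I apply the identities
\[ e^{i(x-y)\xi}=\tfrac{1}{G^{k}(y-x)}\,G^{k}(-D_\xi)\,e^{i(x-y)\xi}, \qquad e^{i(x-y)\xi}=\tfrac{1}{G^{k}(\xi)}\,G^{k}(-D_y)\,e^{i(x-y)\xi} \]
for a large integer $k$ to be fixed. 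Proposition~\ref{PropEstimatesUltradifferentialOperator} and Corollary~\ref{CorEstimatesUltradifferentialOperator} then convert these identities into a factor $e^{-kK\omega(x-y)}$ (which, for $k$ large, yields the desired $e^{-\mu\omega(x-y)}$) and a factor $e^{-kK\omega(\xi)}$ which both guarantees absolute convergence of the $\xi$-integral and absorbs the weight growth $e^{m\omega(\xi)}$ coming from the amplitude estimate.

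The decisive ingredient is the hypothesis $a\sim 0$: Definition~\ref{DefEqFormalSums} (with $b_j=0$) supplies, for every prescribed $n$, an extra factor $\langle(x,\xi)\rangle^{-\rho N}\, e^{n\rho\varphi^{\ast}(N/n)}$ in the estimate of every derivative of $a$, valid on the region $\log(\langle(x,\xi)\rangle/R)\ge (n/N)\varphi^{\ast}(N/n)$. Following the idea of Lemma~\ref{LemmaTechnical2}, for each $(x,\xi)$ with $\langle(x,\xi)\rangle$ sufficiently large I choose $N=N(x,\xi)$ to be the integer with
\[ \tfrac{n}{N}\varphi^{\ast}\bigl(\tfrac{N}{n}\bigr)\le \log\bigl(\langle(x,\xi)\rangle/R\bigr)\le \tfrac{n}{N+1}\varphi^{\ast}\bigl(\tfrac{N+1}{n}\bigr), \]
so that $\langle(x,\xi)\rangle^{-\rho N}e^{n\rho\varphi^{\ast}(N/n)}\lesssim e^{-(n\rho/2)\omega(\langle(x,\xi)\rangle)+\log\langle(x,\xi)\rangle}$. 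This exchanges polynomial decay in $\langle(x,\xi)\rangle$ for the exponential decay $e^{-c\omega(x)}$ with $c$ as large as we wish (by taking $n$ large), thereby producing the missing $e^{-\mu\omega(x)}$ factor. On the complementary compact region where the sharp estimate fails, $\langle(x,\xi)\rangle$ is bounded by a constant depending on $n$, and the integrand is controlled trivially.

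The hard part will be the combinatorial bookkeeping: the Leibniz expansion combined with the two applications of $G^k(D)$ yields a quadruple sum of multinomial type indexed by $\alpha_1{+}\alpha_2{+}\alpha_3=\alpha$, $\beta_1{+}\beta_2=\beta$, and two multi-indices $\epsilon,\tau\in\N_0^d$ coming from the coefficients $b_\epsilon,b_\tau$ of $G^k$. Convergence of the $\epsilon$- and $\tau$-series is handled exactly as in \eqref{series1}--\eqref{series2} of Proposition~\ref{LemmaIteratedIntegralCauchy}, by using \eqref{EqEstimationBAlpha} together with \eqref{EqLLambdaVarphi} to absorb the growth $e^{s\varphi^{\ast}(|\epsilon|/s)}$ into the decay $e^{-kC_1\varphi^{\ast}(|\epsilon|/kC_1)}$ for $s$ large relative to $kC_1$. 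The remaining $\alpha_i,\beta_j$ sums are finite and contribute the required factor $e^{\lambda\varphi^{\ast}(|\alpha+\beta|/\lambda)}$ via Lemma~\ref{lemmafistrella}. Since $\lambda,\mu$ are arbitrary and all constants depend only on them, this establishes $K\in\Sch_\omega(\R^{2d})$ and completes the proof.
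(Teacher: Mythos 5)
Your overall strategy---estimating the kernel $K(x,y)=\int e^{i(x-y)\xi}a(x,\xi)\,d\xi$ directly and invoking Proposition~\ref{PropExampleExtensionPseudodifferentialOperator}\,(2)---is viable, and your central idea (choosing $N=N(x,\xi)$ as in Lemma~\ref{LemmaTechnical2} so as to convert the factor $\langle(x,\xi)\rangle^{-\rho N}e^{n\rho\varphi^{\ast}(N/n)}$ supplied by $a\sim 0$ into exponential decay in $\omega$) is exactly the engine of the paper's proof. But one step, as you describe it, fails: you cannot extract a decay factor $1/G^{k}(\xi)$ from the identity $e^{i(x-y)\xi}=\tfrac{1}{G^{k}(\xi)}G^{k}(-D_y)e^{i(x-y)\xi}$. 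That identity is only useful when it is followed by an integration by parts in $y$ that transfers $G^{k}(D_y)$ onto other $y$-dependent factors; in the kernel there is no $y$-integration and the amplitude $a(x,\xi)$ does not depend on $y$, so the identity reduces to $e^{i(x-y)\xi}=e^{i(x-y)\xi}$ and produces no decay at all. Since you explicitly rely on this factor both for the absolute convergence of the $\xi$-integral and to absorb the growth $e^{m\omega(\xi)}$ of the symbol, this is a genuine gap. It is, however, repairable from within your own argument: the $a\sim 0$ estimate, after the optimal choice of $N$, yields $e^{-c\omega(\langle(x,\xi)\rangle)}$ with $c$ as large as you wish, and since $2\omega(\langle(x,\xi)\rangle)\geq \omega(x)+\omega(\xi)$ this already gives $e^{-(c/2)\omega(x)}e^{-(c/2)\omega(\xi)}$---you only claimed the $x$-half of this decay. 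The other integration by parts (in $\xi$, against $1/G^{k}(y-x)$) is legitimate, since there the $\xi$-integral is present, and it does produce the needed $e^{-kC_2\omega(y-x)}$.

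For comparison, the paper sidesteps the kernel computation entirely: it uses the same choice of $N$ and Lemma~\ref{LemmaTechnical2} to show that the symbol $a$ itself belongs to $\Sch_{\omega}(\R^{2d})$, and then quotes \cite[Proposition 1.2.1]{Nic_Rod2010global} (operators with symbols in $\Sch_{\omega}$ have kernels in $\Sch_{\omega}(\R^{2d})$) together with Proposition~\ref{PropExampleExtensionPseudodifferentialOperator}. Once the source of the $\xi$-decay is corrected, your route amounts to reproving that quoted fact in this special case; it is more self-contained, but substantially longer and with heavier bookkeeping.
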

\begin{proof}
It is enough to show that $a \in \mathcal{S}_{\omega}(\R^{d})$, because  \cite[Proposition 1.2.1]{Nic_Rod2010global} states that operators with symbols in $\mathcal{S}_{\omega}(\R^{d})$ correspond to kernels in $\mathcal{S}_{\omega}(\R^{2d})$ and, by Proposition~\ref{PropExampleExtensionPseudodifferentialOperator}, those operators are $\omega$-regularizing. Since $a \sim 0$, there is $R\geq 1$ such that for every $n>0$, there exist $C_n>0$, $N_n \in \mathbb{N}$ with
\[ |D^{\alpha}_x D^{\beta}_{\xi} a(x,\xi)| \leq C_{8n} \Big( \frac1{\langle(x,\xi)\rangle} \Big)^{\rho(|\alpha+\beta|+N)} e^{8n\rho \varphi^{\ast}\big(\frac{|\alpha+\beta|+N}{8n}\big)} e^{m\omega(x)} e^{m\omega(\xi)}, \] for all $N\geq N_{8n}$, $\log\big(\frac{\langle(x,\xi)\rangle}{R}\big) \geq \frac{8n}{N} \varphi^{\ast}\big(\frac{N}{8n}\big)$, $(\alpha,\beta) \in \mathbb{N}_{0}^{2d}$. We take $0 < \varepsilon < 1$ and $l \in \mathbb{N}$ so that $\omega\big(\frac{t}{R}\big) \geq \varepsilon \omega(t) - \frac1{\varepsilon}$ and $\log(\langle(x,\xi)\rangle) \leq l\omega(x) + l\omega(\xi)$. Observe that there exists $N\geq N_{8n}$ depending on $x,\xi$ and $R$ such that
\[
\left( \frac{8n}{N} \varphi^{\ast}\Big(\frac{N}{8n}\Big) \leq \right) \frac{2n}{N} \varphi^{\ast}\Big(\frac{N}{2n}\Big) \leq \log\Big(\frac{\langle(x,\xi)\rangle}{R}\Big) \leq \frac{2n}{N+1} \varphi^{\ast}\Big(\frac{N+1}{2n}\Big).
\] Now, by Lemma~\ref{LemmaTechnical2},
\begin{eqnarray*}
\Big[ \Big(\frac{\langle(x,\xi)\rangle}{R}\Big)^{-N} e^{4n \varphi^{\ast}\big(\frac{N}{4n}\big)} \Big]^{\rho} &\leq& \Big[ e^{-2n \omega\big(\frac{\langle(x,\xi)\rangle}{R}\big)} e^{\log\big(\frac{\langle(x,\xi)\rangle}{R}\big)} \Big]^{\rho} \leq \Big[ e^{-2n\varepsilon \omega(\langle(x,\xi)\rangle) + \frac{2n}{\varepsilon} + l\omega(x) + l\omega(\xi)} \Big]^{\rho} \\
&\leq& e^{-n \rho \varepsilon \omega(x) -n \rho \varepsilon \omega(\xi) + \frac{2n\rho}{\varepsilon} + \rho l\omega(x) + \rho l\omega(\xi)}.
\end{eqnarray*} Therefore we obtain, since $R$ and $\langle(x,\xi)\rangle$ are greater than or equal to $1$, by the convexity of $\varphi^{*}$,
\begin{eqnarray*}
\big|D^{\alpha}_x D^{\beta}_{\xi} a(x,\xi)\big| &\leq& C_{8n} \langle(x,\xi)\rangle^{-\rho|\alpha+\beta|} R^{-\rho N} \left(\frac{\langle(x,\xi)\rangle}{R}\right)^{-\rho N} e^{4n\rho \varphi^{\ast}\big(\frac{N}{4n}\big)} e^{4n \rho \varphi^{\ast}\big(\frac{|\alpha+\beta|}{4n}\big)} e^{m\omega(x)} e^{m\omega(\xi)} \\
&\leq& C_{8n} e^{\frac{2n\rho}{\varepsilon}} e^{4n \varphi^{\ast}\big(\frac{|\alpha+\beta|}{4n}\big)} e^{(m+\rho l-n\rho\varepsilon)\omega(x)} e^{(m+\rho l-n\rho\varepsilon)\omega(\xi)}.
\end{eqnarray*} 
Now, it suffices to select $n$ large enough.
\end{proof}%

We can also obtain  the opposite of Proposition~\ref{regu-equiv0} for weight functions of the type $\omega(t)=\log^{s}(1+t)$ for $s\ge 1.$ Despite we do not consider in this paper $s=1$, the same argument in this case works, too. We need the following lemma, which holds for any weight function $\omega$.

\begin{lema}\label{regu-reciproco-lema}
Suppose that $a\in\bigcap_{m\in\R} \GS^{m,\omega}_{\rho}$. Then $a\sim0$ in $\FGS^{m,\omega}_{\rho}$ for all $m\in\R$.
\end{lema}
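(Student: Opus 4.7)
The plan is to exploit the hypothesis $a\in \GS^{m',\omega}_{\rho}$ for a sufficiently negative $m'$ to produce both the extra polynomial decay $\langle(x,\xi)\rangle^{-\rho N}$ and the exponential factor $e^{n\rho\varphi^{\ast}((|\alpha+\beta|+N)/n)}$ required on the right-hand side of \eqref{EqFormalSumEquivalent}. I would fix $m\in\R$ and $n\in\N$, and choose $m':=m-L^{2}n\rho$, where $L$ is the constant of \eqref{EqOmegaDiff}. The hypothesis then yields a constant $C_n>0$ with
\[ \big|D^{\alpha}_x D^{\beta}_{\xi} a(x,\xi)\big| \leq C_n \langle(x,\xi)\rangle^{-\rho|\alpha+\beta|} e^{n\rho\varphi^{\ast}(|\alpha+\beta|/n)} e^{m'\omega(x)} e^{m'\omega(\xi)} \]
for every $(\alpha,\beta)\in\N_{0}^{2d}$ and every $(x,\xi)\in\R^{2d}$.

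The next step is to factor out the desired $e^{m\omega(x)}e^{m\omega(\xi)}$, leaving the surplus $e^{-L^{2}n\rho(\omega(x)+\omega(\xi))}$. Applying \eqref{EqOmegaDiff} to the decomposition $(x,\xi)=(x,0)+(0,\xi)\in\R^{2d}$ gives $\omega((x,\xi))\leq L(\omega(x)+\omega(\xi)+1)$, and then \eqref{EqOmegaCorchetex} gives $\omega(\langle(x,\xi)\rangle)\leq L(\omega((x,\xi))+1)$. Chaining these two inequalities, the surplus is bounded, up to a constant depending only on $L,n,\rho$, by $e^{-n\rho\,\omega(\langle(x,\xi)\rangle)}$. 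The two factors of $L$ lost in this chain are precisely what motivates the choice $m'=m-L^{2}n\rho$.

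Then I would convert this exponential decay into polynomial decay via \eqref{EqLemmaTechnical1}, applied in the (easily verified) rearranged form $e^{-n\rho\omega(t)}\leq t^{-\rho N}e^{n\rho\varphi^{\ast}(N/n)}$ with $t=\langle(x,\xi)\rangle\geq 1$. Combining this with the original $\langle(x,\xi)\rangle^{-\rho|\alpha+\beta|}$ and $e^{n\rho\varphi^{\ast}(|\alpha+\beta|/n)}$, and merging the two $\varphi^{\ast}$ terms through the inequality $n\varphi^{\ast}(s/n)+n\varphi^{\ast}(t/n)\leq n\varphi^{\ast}((s+t)/n)$ from Lemma~\ref{lemmafistrella}(2), one arrives at exactly the right-hand side of \eqref{EqFormalSumEquivalent} with $D_n:=C_n e^{L^{2}n\rho+Ln\rho}$. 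The estimate in fact holds globally for every $N\in\N$ and every $(x,\xi)\in\R^{2d}$, so one may take $R=1$ and $N_n=1$; the region restriction $\log(\langle(x,\xi)\rangle/R)\geq (n/N)\varphi^{\ast}(N/n)$ plays no role.

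The main obstacle, and the only real subtlety, is calibrating $m'$ correctly in advance: one has to anticipate how many factors of $L$ will be lost when passing from $\omega(x)+\omega(\xi)$ to $\omega(\langle(x,\xi)\rangle)$, so that the residual decay exponent matches $n\rho$ exactly and \eqref{EqLemmaTechnical1} can be invoked without further maneuvering. Everything else reduces to standard bookkeeping with $\varphi^{\ast}$ and the basic weight estimates from Definition~\ref{DefinitionWeightFunction}.
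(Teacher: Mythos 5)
Your proof is correct and follows essentially the same route as the paper: both arguments take $m'$ sufficiently negative depending on $n$ (the paper writes the surplus as $e^{-nC(\omega(x)+\omega(\xi))}$ with an abstract constant $C$ where you track it explicitly as $L^{2}n\rho$), pass to $e^{-n\rho\omega(\langle(x,\xi)\rangle)}$, convert that into $\langle(x,\xi)\rangle^{-\rho N}e^{n\rho\varphi^{\ast}(N/n)}$ via \eqref{EqLemmaTechnical1}, and merge the two $\varphi^{\ast}$ factors by convexity to obtain a global estimate valid for all $N$.
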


\begin{proof}
First, we observe that there is $C>0$, which only depends on $\omega$, such that 
\begin{equation}\label{cota-omega}
\omega(\langle (x,\xi)\rangle)\le C+C\omega(x)+C\omega(\xi),
\end{equation}
for all $x,\xi\in\R^{d}$. 

Now, we fix $m\in\R.$ By assumption, for all $n\in\N$ there is $E_{n}>0$ (which also depends on $m$) such that 
\begin{eqnarray*}
\lefteqn{\big|D_{x}^{\alpha}D_{\xi}^{\beta}a(x,\xi)\big|\le E_{n}\frac{e^{\rho n\varphi^{*}\big(\frac{|\alpha+\beta|}{n}\big)}}{\langle (x,\xi)\rangle^{\rho|\alpha+\beta|}}e^{-nC(\omega(x)+\omega(\xi))}e^{m(\omega(x)+\omega(\xi))}}\nonumber\\
&&=  E_{n}\frac{e^{\rho n\varphi^{*}\big(\frac{|\alpha+\beta|}{n}\big)}}{\langle (x,\xi)\rangle^{\rho|\alpha+\beta|+\rho N}}\langle (x,\xi)\rangle^{\rho N}e^{-nC(\omega(x)+\omega(\xi))}e^{m(\omega(x)+\omega(\xi))},\nonumber 
\end{eqnarray*}
for all $x,\xi\in\R^{d}$, $\alpha,\beta\in\N_{0}^{d}$ and $N\in\N$. By \eqref{cota-omega}, we have 
\begin{eqnarray*}
-nC\omega(x)-nC\omega(\xi)\le -n\omega(\langle (x,\xi)\rangle)+nC
\le -n\rho\omega(\langle (x,\xi)\rangle)+nC. 
\end{eqnarray*}
Moreover, by \eqref{EqLemmaTechnical1}, 
$$
\langle (x,\xi)\rangle^{N}e^{-n\omega(\langle (x,\xi)\rangle)}\le e^{n\varphi^{*}\big(\frac{N}{n}\big)}.
$$
Therefore, we obtain that for each $n\in\N$ there is $C_{n}>0$ such that 
$$
\big|D_{x}^{\alpha}D_{\xi}^{\beta}a(x,\xi)\big|\le C_{n}\frac{e^{\rho n\varphi^{*}\big(\frac{|\alpha+\beta|+N}{n}\big)}}{\langle (x,\xi)\rangle^{\rho|\alpha+\beta|+\rho N}}e^{m(\omega(x)+\omega(\xi))},
$$
for all $x,\xi\in\R^{d}$, $\alpha,\beta\in\N_{0}^{d}$ and $N\in\N$. Since the argument does not depend on $m\in\R$, we have $a\sim0$ in $\FGS^{m,\omega}_{\rho}$ for each $m\in\R.$
\end{proof}
\begin{prop}\label{equiv0-reciproco}
Let $\omega(t)=\log^{s}(1+t)$, for $s\ge 1.$ If $A$ is an $\omega$-regularizing operator with symbol $a$, we have $a\sim0$ in $\FGS^{m,\omega}_{\rho}$ for all $m\in\R.$ 
\end{prop}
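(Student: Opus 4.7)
The plan is to reduce the statement to Lemma~\ref{regu-reciproco-lema} by showing that the symbol $a$ actually belongs to $\Sch_{\omega}(\R^{2d})$, and then invoking the equality $\Sch_{\omega}(\R^{2d}) = \bigcap_{m\in\R} \GS^{m,\omega}_{\rho}$ established for weights of the form $\omega(t) = \log^{s}(1+t)$, $s \geq 1$, in Example~\ref{examples}(b). The core input is the kernel representation of $\omega$-regularizing operators provided by Proposition~\ref{PropExampleExtensionPseudodifferentialOperator}.

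First, since $A$ is $\omega$-regularizing, Proposition~\ref{PropExampleExtensionPseudodifferentialOperator} yields a kernel $K(x,y) \in \Sch_{\omega}(\R^{2d})$ such that $A(f)(x) = \int K(x,y) f(y)\,dy$ for every $f \in \Sch_{\omega}(\R^{d})$. On the other hand, viewing the symbol $a(x,\xi)$ as an amplitude independent of $y$, Lemma~\ref{lema-BS} together with Theorem~\ref{pseudotheo} identifies the kernel of $A$ with the oscillatory integral
\[
K(x,y) = \int e^{i(x-y)\xi} a(x,\xi)\, d\xi.
\]
Performing the linear change of variables $z = x - y$, set $b(x,z) := K(x, x-z)$. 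Since $(x,y) \mapsto (x, x-y)$ is an affine bijection of $\R^{2d}$ and $\Sch_{\omega}(\R^{2d})$ is stable under affine changes of variables (an immediate consequence of Lemma~\ref{LemmaEquivalence} together with property $(\alpha)$ of $\omega$), one has $b \in \Sch_{\omega}(\R^{2d})$. The relation then reads $b(x,z) = (2\pi)^{d}\, \F_{\xi}^{-1}[a(x,\cdot)](z)$, so that Fourier inversion in the second variable gives
\[
a(x,\xi) = (2\pi)^{-d}\, \F_{z \to \xi}[b(x,\cdot)](\xi).
\]
Since $\Sch_{\omega}(\R^{2d})$ is invariant under the partial Fourier transform in the last $d$ variables (this follows from Definition~\ref{def3}, which is symmetric in $u$ and $\hat{u}$), we conclude that $a \in \Sch_{\omega}(\R^{2d})$.

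With $a \in \Sch_{\omega}(\R^{2d})$ in hand, the specific form $\omega(t) = \log^{s}(1+t)$, $s \geq 1$, satisfies $\omega(t^{(1+\rho)/\rho}) = O(\omega(t))$ as $t \to \infty$, so by the last observation in Example~\ref{examples}(b) we have
\[
\Sch_{\omega}(\R^{2d}) = \bigcap_{m \in \R} \GS^{m,\omega}_{\rho},
\]
and therefore $a \in \bigcap_{m \in \R} \GS^{m,\omega}_{\rho}$. Finally, Lemma~\ref{regu-reciproco-lema} yields at once that $a \sim 0$ in $\FGS^{m,\omega}_{\rho}$ for every $m \in \R$, which is the claim.

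The main obstacle I foresee is the rigorous identification of the symbol from the kernel via Fourier inversion. The oscillatory integral $K(x,y) = \int e^{i(x-y)\xi} a(x,\xi)\,d\xi$ is a priori only meaningful as an iterated/limit integral (as in Theorem~\ref{pseudotheo}), so one must argue that once $K \in \Sch_{\omega}(\R^{2d})$ is genuinely Schwartz (hence $b(x,\cdot)$ is integrable), the inversion $a(x,\xi) = (2\pi)^{-d} \int e^{-iz\xi} b(x,z)\,dz$ makes sense pointwise and recovers precisely the original symbol. The remaining step, namely the invariance of $\Sch_{\omega}(\R^{2d})$ under partial Fourier transform and affine changes of variables, is standard but should be invoked explicitly.
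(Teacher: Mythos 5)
Your proof is correct and follows essentially the same route as the paper: kernel in $\Sch_{\omega}(\R^{2d})$ via Proposition~\ref{PropExampleExtensionPseudodifferentialOperator}, hence $a\in\Sch_{\omega}(\R^{2d})$, hence $a\in\bigcap_{m\in\R}\GS^{m,\omega}_{\rho}$ by Example~\ref{examples}(b), and conclusion by Lemma~\ref{regu-reciproco-lema}. The only difference is that the paper outsources the passage from the kernel to the symbol to \cite[Proposition 1.2.1]{Nic_Rod2010global}, whereas you reprove it by the affine change of variables and partial Fourier inversion; since that is precisely the content of the cited result, your argument is simply a self-contained version of the same proof.
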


\begin{proof}
Since $A$ is $\omega$-regularizing, the symbol $a\in\Sch_{\omega}(\R^{2d})$ by Propostion~\ref{PropExampleExtensionPseudodifferentialOperator} and \cite[Proposition 1.2.1]{Nic_Rod2010global}. By the argument given in Example~\ref{examples}\,(b) for weights $\omega(t)=\log^{s}(1+t)$, for $s\ge 1,$ we obtain $a\in\bigcap_{m\in\R} \GS^{m,\omega}_{\rho}$. Hence, Lemma~\ref{regu-reciproco-lema} gives the conclusion.
\end{proof}

Now, we construct a symbol from a formal sum, and to do so we need some kind of partition of unity. Here, we cannot use the estimates as in \cite[Lemma 3.6]{FGJ2005pseudo} for some technical difficulties, but we consider the usual estimates for ultradifferentiable functions instead. This is due to the fact that our symbols are defined in the whole $\R^{d}$ for all the variables. However, we observe that this consideration is not so restrictive (cf. \cite[Remark 1.7~(1)]{FGJ2005pseudo}).

We consider $\Phi(x,\xi) \in \mathcal{D}_{(\sigma)}\big(\mathbb{R}^{2d}\big)$, where $\sigma$ and $\omega$ are weight functions which satisfy $\omega(t^{1/\rho})=O(\sigma(t))$ when $t\to+\infty$ (Lemma~\ref{Lemma158TesisDavid}(2)) and, in addition,
\[ |\Phi(x,\xi)| \leq 1, \qquad \Phi(x,\xi)=1 \ \text{if} \ |(x,\xi)| \leq 2, \qquad \Phi(x,\xi)=0 \ \text{if} \ |(x,\xi)| \geq 3. \]
Let $(j_n)_{n}$ be an increasing sequence of natural numbers such that $j_n/n \to \infty$ as $n$ tends to infinity. For each $j_n \leq j < j_{n+1}$, we set
\begin{equation}
\label{psi-function}\Psi_{j,n}(x,\xi) := 1 - \Phi\Big(\frac{(x,\xi)}{A_{n,j}}\Big), \qquad A_{n,j}=Re^{\frac{n}{j}\varphi_{\omega}^{\ast}(\frac{j}{n})}, \end{equation} where $R\geq 1$ is the constant which appears in Definition~\ref{DefFormalSums}. It is clear that $A_{n,j}^{\rho} \leq A_{n,j}$. We observe that $(x,\xi) \in \supp \Psi_{j,n}$ implies $\big| \frac{(x,\xi)}{A_{n,j}} \big| > 2$ and so
\begin{equation}\label{EqInequalityPsi1}
\langle(x,\xi)\rangle > 2A_{n,j}.
\end{equation}
Since $\Phi \in\mathcal{D}_{(\sigma)}\big(\mathbb{R}^{2d}\big)$, for each $k \in \mathbb{N}$ there is a constant $C_{k}>0$ such that
$|D^{\alpha}_x D^{\beta}_{\xi} \Phi(x,\xi)| \leq C_{k} e^{k\varphi^{\ast}_{\sigma}\big(\frac{|\alpha+\beta|}{k}\big)}$. Now, by Lemma~\ref{Lemma158TesisDavid}(2), for all $k\in\N$ there is $C_{k}>0$ with
\begin{equation}\label{EqEstimateDerivativesPsijn1}
|D^{\alpha}_x D^{\beta}_{\xi} \Psi_{j,n}(x,\xi)| = \Big|D^{\alpha}_x D^{\beta}_{\xi} \Phi\Big( \frac{(x,\xi)}{A_{n,j}}\Big)\Big| A_{n,j}^{-|\alpha+\beta|} \leq C_k e^{k \rho \varphi_{\omega}^{\ast}\big(\frac{|\alpha+\beta|}{k}\big)} A_{n,j}^{-\rho|\alpha+\beta|},
\end{equation} for each $(\alpha,\beta) \in \mathbb{N}_{0}^{2d}$ and all $k\in \N$. If additionally we assume that $(x,\xi)$ is in the support of any derivative of $\Psi_{j,n}(x,\xi)$, we have $2 \leq \big| \frac{(x,\xi)}{A_{n,j}} \big| \leq 3$. This implies
\begin{equation}\label{EqInequalityPsi2}
2A_{n,j} \leq \langle(x,\xi)\rangle \leq \sqrt{10}A_{n,j}.
\end{equation} 
We obtain, from~\eqref{EqEstimateDerivativesPsijn1}, that for all $k \in \mathbb{N}$, 
\begin{equation}\label{EqEstimateDerivativesPsijn2}
|D^{\alpha}_x D^{\beta}_{\xi} \Psi_{j,n}(x,\xi)| \leq C_k \Big( \frac{\sqrt{10}}{\langle(x,\xi)\rangle} \Big)^{\rho|\alpha+\beta|} e^{k\rho \varphi^{\ast}\big(\frac{|\alpha+\beta|}{k}\big)}.
\end{equation} 
Hence $\Psi_{j,n}\in \GS^{0,\omega}_{\rho}$ (here, we apply Lemma~\ref{lemmafistrella}~(1) to get rid of the constant $(\sqrt{10})^{\rho|\alpha+\beta|}$).
The proof of the next results follow the lines of the one of \cite[Theorem 3.7]{FGJ2005pseudo}:
\begin{theo}\label{TheoEquivalentSymbol}
Let $\sum a_j$ be a formal sum in $\FGS^{m,\omega}_{\rho}$. Then there exists a global symbol $a \in \GS^{m,\omega}_{\rho}$ such that $a \sim \sum a_j$.
\end{theo}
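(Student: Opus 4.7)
The plan is to produce $a$ via a Borel-type resummation using the cutoff functions $\Psi_{j,n}$ introduced in~\eqref{psi-function}. I would pick an increasing sequence $(n_j)_{j\in\N_0}\subset\N$ tending to $+\infty$ (for example with $n_0$ small and $n_j\leq j$ for $j\geq 1$; the precise growth rate will be fixed below), and set
\[ a(x,\xi) := \sum_{j=0}^{\infty} \Psi_{j,n_j}(x,\xi)\, a_j(x,\xi). \]
Since $\varphi^{\ast}(t)/t \to \infty$ as $t\to\infty$, the radii $A_{n_j,j} = Re^{(n_j/j)\varphi^{\ast}(j/n_j)}$ tend to $+\infty$, so on every compact set $\Psi_{j,n_j}$ vanishes for all but finitely many $j$; hence the sum is locally finite and $a\in C^{\infty}(\R^{2d})$ regardless of the precise choice of $(n_j)$.

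First I would verify $a \in \GS^{m,\omega}_{\rho}$. Fixing $n\in\N$ and a multi-index $(\alpha,\beta)$, I apply Leibniz to each $D^{\alpha}_x D^{\beta}_\xi(\Psi_{j,n_j}\, a_j)$ and split the resulting series over $j$ according to whether $n_j\leq n$ or $n_j>n$. For the finitely many indices $j$ with $n_j\leq n$, I use $|\Psi_{j,n_j}|\leq 1$ together with~\eqref{EqEstimateDerivativesPsijn2} for the derivatives of the cutoff, and the $\FGS$ bound for $D^{\gamma}a_j$: the latter is legitimate on $\supp\Psi_{j,n_j}$ since $\langle(x,\xi)\rangle\geq 2A_{n_j,j}$ by~\eqref{EqInequalityPsi1}, so the region condition of Definition~\ref{DefFormalSums} is met. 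For the indices $j$ with $n_j>n$, the large parameter in the $\FGS$ estimate supplies a decisive factor $\langle(x,\xi)\rangle^{-\rho j}\, e^{n_j\rho\varphi^{\ast}(j/n_j)}$ which, combined with $\langle(x,\xi)\rangle\geq 2A_{n_j,j}$, is bounded by $2^{-\rho j}$ up to harmless constants. Combining the two regimes through the convexity bound~\eqref{EqLLambdaVarphi} and a geometric-series argument in the spirit of~\eqref{series1}--\eqref{series2}, the full Leibniz expansion is controlled by a single estimate of $\GS^{m,\omega}_{\rho}$ type.

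To prove $a\sim\sum a_j$ I fix $n$ and choose $N_n$ so large that $n_j\geq 2n$ for $j\geq N_n$. For $N\geq N_n$ I write
\[ a-\sum_{j<N} a_j \;=\; \sum_{j<N}(\Psi_{j,n_j}-1)\,a_j \;+\; \sum_{j\geq N}\Psi_{j,n_j}\,a_j. \]
By calibrating the initial terms of $(n_j)$, one can guarantee that in the admissible region $\log(\langle(x,\xi)\rangle/R)\geq(n/N)\varphi^{\ast}(N/n)$ the inequality $\langle(x,\xi)\rangle\geq 3A_{n_j,j}$ holds for every $j<N$, which forces $\Psi_{j,n_j}\equiv 1$ there and makes the first sum vanish identically. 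The tail $\sum_{j\geq N}\Psi_{j,n_j}\,a_j$ is then treated exactly as in the previous step, but now with the crucial gain that the $\FGS$ estimate for each $a_j$ with $j\geq N$ delivers the additional factor $\langle(x,\xi)\rangle^{-\rho N}\,e^{n\rho\varphi^{\ast}(N/n)}$ demanded on the right-hand side of~\eqref{EqFormalSumEquivalent}.

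The \emph{main obstacle} is the simultaneous calibration of $(n_j)$: it must grow fast enough to enforce geometric convergence of the tail in both steps, yet slowly enough (and with the right initial behaviour) that in the equivalence step the finitely many early cutoffs are all equal to $1$ on the prescribed region. This balancing act, already present in the local version of the theorem in~\cite[Theorem 3.7]{FGJ2005pseudo}, is handled through repeated use of Lemma~\ref{lemmafistrella} and the monotonicity of $\varphi^{\ast}(t)/t$ when redistributing exponents throughout the Leibniz expansion.
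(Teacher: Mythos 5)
Your construction is, up to a change of bookkeeping, exactly the paper's: setting $n_j=n$ for $j_n\le j<j_{n+1}$ turns your sum $\sum_j \Psi_{j,n_j}a_j$ into the paper's $a_0+\sum_{n\ge1}\sum_{j=j_n}^{j_{n+1}-1}\Psi_{j,n}a_j$, and both your symbol estimate (via $\langle(x,\xi)\rangle^{-\rho j}e^{n_j\rho\varphi^{\ast}(j/n_j)}\le(2R)^{-\rho j}$ on $\supp\Psi_{j,n_j}$) and your equivalence step (early cutoffs identically $1$ on the admissible region, tail gaining the factor $(2R)^{-\rho(j-N)}$) are the ones in the paper. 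Your deferred ``calibration of the initial terms'' is also what the paper does: it restricts to $N\ge nj_n$ and uses the monotonicity of $\varphi^{\ast}(t)/t$ to force $\Psi_{j,k}\equiv1$ for the indices $j<N$ carried by a small parameter.

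The one place where the sketch hides the real work --- and where your stated desideratum points the wrong way --- is the convergence of the series defining $a$ and of the tail $\sum_{j\ge N}$. The geometric factor $(2R)^{-\rho j}$ comes for free from the support condition; the danger is that invoking the $\FGS$ estimate for $a_j$ with parameter $n_j$ (which is forced when $n_j>n$, since then $\supp\Psi_{j,n_j}$ need not lie in the region where the parameter-$n$ bound is valid) produces the constant $D_{n_j}$, which may grow arbitrarily fast with $n_j$. One must therefore prove $\sum_j D_{n_j}(2R)^{-\rho j}<\infty$, and this forces $(n_j)$ to grow \emph{slowly}, not fast: the paper chooses the blocks $(j_n)$ inductively so that $D_{n+1}\sum_{j\ge j_{n+1}}(2R)^{-\rho j}\le\tfrac12 D_n\sum_{j=j_n}^{j_{n+1}-1}(2R)^{-\rho j}$, whence the block sums $\overline{D}_n$ halve and are summable. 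Your phrase ``grow fast enough to enforce geometric convergence of the tail'' misidentifies the constraint; without this inductive lengthening of the blocks (equivalently, a sufficiently slow $j\mapsto n_j$), the series need not converge. With that correction the remainder of your argument goes through as in \cite[Theorem 3.7]{FGJ2005pseudo}.
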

\begin{proof}
We consider the functions $\Psi_{j,n}$ defined in \eqref{psi-function}. Since $\Psi_{j,n} \neq 0$ implies that formula~\eqref{EqInequalityPsi1} holds, we also have
\begin{equation}\label{EqTheoExistenceSymbol1}
\langle(x,\xi)\rangle^{-\rho j} e^{n \rho \varphi^{\ast}\big( \frac{j}{n} \big)} < (2R)^{-\rho j}.
\end{equation} If we suppose that $(x,\xi)$ belongs to the support of any derivative of $\Psi_{j,n}$, then formula~\eqref{EqInequalityPsi2} is satisfied. In particular, we have
\[ \log\Big( \frac{\langle(x,\xi)\rangle}{\sqrt{10}R} \Big) \leq \frac{n}{j}\varphi^{\ast}\big(\frac{j}{n}\big). \]

It is not difficult to see that, by formula~\eqref{EqTheoExistenceSymbol1}, 
\begin{eqnarray}
\lefteqn{\big|D^{\alpha}_x D^{\beta}_{\xi} \big( a_j(x,\xi) \Psi_{j,n}(x,\xi) \big)\big|} \nonumber \\
&& \nonumber \le D_n \langle(x,\xi)\rangle^{-\rho(|\alpha+\beta|+j)} e^{2n\rho \varphi^{\ast}\big(\frac{|\alpha+\beta|+j}{2n}\big)} e^{m\omega(x)} e^{m\omega(\xi)} \\ \label{Eq2jn}
&& \leq D_n \langle(x,\xi)\rangle^{-\rho|\alpha+\beta|} e^{n \rho \varphi^{\ast}\big(\frac{|\alpha+\beta|}{n}\big)} e^{m\omega(x)} e^{m\omega(\xi)} (2R)^{-\rho j},
\end{eqnarray} 
for some constant $D_n > 0$, for all $j \in \mathbb{N}_0$, $(\alpha,\beta)\in \mathbb{N}_0^{2d}$ and  $\log\big(\frac{\langle(x,\xi)\rangle}{2R}\big)\geq \frac{n}{j}\varphi^{\ast}\big(\frac{j}{n}\big)$. This shows that $a_j(x,\xi) \Psi_{j,n}(x,\xi)$ is a global symbol, since $\log\big(\frac{\langle(x,\xi)\rangle}{2R}\big)\le \frac{n}{j}\varphi^{\ast}\big(\frac{j}{n}\big)$ implies that $\Psi_{j,n}(x,\xi)=0$, by \eqref{EqInequalityPsi1}.

We observe that $\sum_{j=1}^{\infty} (2R)^{-\rho j}$ is convergent, because $R\geq1$. Let $(j_n)_{n}$ be the sequence which defines the functions $\Psi_{j,n}$. By induction, we can take the elements of $(j_n)_{n}$ so that $j_1:=1$, $j_n < j_{n+1}$, $\frac{j_n}{n} \to \infty$ and
\[ D_{n+1} \sum_{j=j_{n+1}}^{\infty} \frac1{(2R)^{\rho j}} \leq \frac{D_n}{2} \sum_{j=j_n}^{j_{n+1}-1} \frac1{(2R)^{\rho j}}. \] Then it is easy to check that
\[ \overline{D}_n:= D_n \sum_{j=j_n}^{j_{n+1}-1} \frac1{(2R)^{\rho j}} \] satisfies that $\overline{D}_{n+1} \leq \frac{\overline{D}_n}{2}$. 

On the other hand, it is not difficult to see that
\[ a(x,\xi) = a_0(x,\xi) + \sum_{n=1}^{\infty} \sum_{j=j_n}^{j_{n+1}-1} \Psi_{j,n}(x,\xi) a_j(x,\xi) \] is a global symbol in $\GS^{m,\omega}_{\rho}$.

Now, we claim that $a \sim \sum a_j$. Assume $\log\big(\frac{\langle(x,\xi)\rangle}{\sqrt{10}R}\big) \geq \frac{n}{N} \varphi^{\ast}\big(\frac{N}{n}\big)$. We consider only the case $N\ge n j_{n}$ (which is coherent with Defintion~\ref{DefEqFormalSums}). For all $j\in \N$ there is $k\in\N$ with $j_{k}\le j<j_{k+1}$.  If $k<n$, we have $j\le j_{n}(<N)$ and therefore
$$
\log\big(\frac{\langle(x,\xi)\rangle}{\sqrt{10}R}\big) \geq \frac{n}{N} \varphi^{\ast}\big(\frac{N}{n}\big)\ge \frac{1}{j_{n}} \varphi^{\ast}(j_{n})\ge \frac{k}{j} \varphi^{\ast}\big(\frac{j}{k}\big).
$$
In this case $\Psi_{j,k}\equiv 1$. If $k\ge n$ and $N>j$ we have
$$
\log\big(\frac{\langle(x,\xi)\rangle}{\sqrt{10}R}\big) \geq \frac{n}{N} \varphi^{\ast}\big(\frac{N}{n}\big)\ge \frac{k}{j} \varphi^{\ast}\big(\frac{j}{k}\big),
$$
and also $\Psi_{j,k}\equiv 1$. Hence, we only have to analyse the case when $j\geq N$ and $k \geq n$. 

So, we are looking for an estimate for
$\big|D^{\alpha}_x D^{\beta}_{\xi} \Psi_{j,k}(x,\xi) a(x,\xi)\big|$ with $j \geq N$ and $k \geq n$. We assume that $\log\big(\frac{\langle(x,\xi)\rangle}{2R}\big) \geq \frac{k}{j} \varphi^{\ast}\big(\frac{j}{k}\big)$ (since, otherwise, $\Psi_{j,k}=0$). Now, we have, by the convexity of $\varphi^{*}$ and Leibniz's rule, 
\begin{eqnarray}
\lefteqn{\nonumber \big|D^{\alpha}_x D^{\beta}_{\xi} \big(\Psi_{j,k}(x,\xi) a_j(x,\xi) \big)\big| } \\
&& \nonumber \leq D_k \langle(x,\xi)\rangle^{-\rho(|\alpha+\beta|+N)} e^{k\rho \varphi^{\ast}\big(\frac{|\alpha+\beta|+N}{k}\big)}  \label{Eq3}
 \langle(x,\xi)\rangle^{-\rho(j-N)} e^{k\rho\varphi^{\ast}\big(\frac{j-N}{k}\big)} e^{m\omega(x)} e^{m\omega(\xi)}. 
\end{eqnarray} 
We obtain 
\[ \Big( \langle(x,\xi)\rangle^{-(j-N)} e^{kL \varphi^{\ast}\big(\frac{j-N}{kL}\big)} \leq \Big)\langle(x,\xi)\rangle^{-(j-N)} e^{k \varphi^{\ast}\big(\frac{j-N}{k}\big)} \leq (2R)^{-(j-N)}, \] and thus, for its $\rho$-power also. Therefore, for $k\ge n$, $j\ge N$ and the constants $(D_{k})_{k\ge n}$ as in \eqref{Eq2jn} we have
\begin{eqnarray}
\lefteqn{\big|D^{\alpha}_x D^{\beta}_{\xi} \big(\Psi_{j,k}(x,\xi) a_j(x,\xi) \big)\big|}\nonumber\\&&\ \ \le  D_k\label{Eq4}
 \langle(x,\xi)\rangle^{-\rho(|\alpha+\beta|+N)} e^{k\rho \varphi^{\ast}\big(\frac{|\alpha+\beta|+N}{k}\big)} (2R)^{-\rho(j-N)} e^{m\omega(x)} e^{m\omega(\xi)}.
\end{eqnarray} 
Since $k\geq n$ and $j \geq N$, we get
\begin{eqnarray}
 \Big| D^{\alpha}_x D^{\beta}_{\xi} \Big( a - \sum_{j<N} a_j\Big)\Big|  \label{Eq5}
\le  \sum_{k \geq n} \sum_{\substack{j=j_k\\ j\ge N}}^{j_{k+1}-1} |D^{\alpha}_x D^{\beta}_{\xi} (\Psi_{j,k} a_j)|.
\end{eqnarray} Now, $k\geq n$ also implies $k \rho \varphi^{\ast}\big(\frac{|\alpha+\beta|+N}{k}\big) \leq n \rho \varphi^{\ast}\big(\frac{|\alpha+\beta|+N}{n}\big)$. Therefore, using~\eqref{Eq4}, we can estimate \eqref{Eq5} by
\begin{eqnarray*}
\lefteqn{ (2R)^{\rho N} \langle(x,\xi)\rangle^{-\rho(|\alpha+\beta|+N)} e^{m\omega(x)} e^{m\omega(\xi)} \sum_{k\geq n} D_k e^{k\rho \varphi^{\ast}\big(\frac{|\alpha+\beta|+N}{k}\big)} \sum_{\substack{j=j_k\\ j\ge N}}^{j_{k+1}-1} (2R)^{-\rho j}} \\
&& \leq  (2R)^{\rho N} \langle(x,\xi)\rangle^{-\rho(|\alpha+\beta|+N)} e^{n\rho \varphi^{\ast}\big(\frac{|\alpha+\beta|+N}{n}\big)} e^{m\omega(x)} e^{m\omega(\xi)} \sum_{k\geq n} \overline{D}_k,
\end{eqnarray*} where $\sum_{k\geq n} \overline{D}_k$ is a constant depending on $n$, which finishes the proof.
\end{proof}

From now on, we assume that $\frac{n}{j}\varphi^{\ast}\big(\frac{j}{n}\big) \geq n$ for every $j \geq j_n$. For every $n \in \mathbb{N}$, we define, for $j_n \leq j < j_{n+1}$,
\begin{equation}\label{not18}
\varphi_j := \Psi_{j,n}, \qquad \qquad \varphi_0 = 1. 
\end{equation} 
A simple computation gives $A_{n,j} \leq A_{n,j+1}$. Since $\varphi_j$, $\varphi_{j+1} \in \mathcal{D}_{(\sigma)}\big(\mathbb{R}^{2d}\big)$, we observe that the difference $\varphi_j - \varphi_{j+1}$ belongs to $\mathcal{D}_{(\sigma)}\big(\mathbb{R}^{2d}\big)$.
Therefore, by \eqref{EqEstimateDerivativesPsijn2}, $\varphi_j - \varphi_{j+1}\in \GS^{0,\omega}_{\rho}$.

\begin{lema}\label{LemmaSumOfAmplitudes}
Let $a(x,y,\xi)$ be an amplitude in $\GA^{m,\omega}_{\rho}$, and let $A$ be the corresponding pseudodifferential operator. For each $u \in \mathcal{S}_{\omega}(\R^{d})$,
\[ A(u) = \sum_{j=0}^{\infty} A_j(u) \] in the topology of $\mathcal{S}_{\omega}(\R^{d})$, where $A_j$, $j\geq0$, is the pseudodifferential operator given by the amplitude $(\varphi_j-\varphi_{j+1})(x,\xi)a(x,y,\xi)$.
\end{lema}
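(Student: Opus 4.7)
The plan is to verify each $A_j$ is a continuous pseudodifferential operator on $\Sch_\omega(\R^d)$ and then to obtain convergence of the partial sums $\sum_{j=0}^{N}A_j(u)$ to $A(u)$ in the Fr\'echet topology of $\Sch_\omega(\R^d)$ via a telescoping argument.

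For the first part, observe that by \eqref{EqEstimateDerivativesPsijn2} and Lemma~\ref{lemmafistrella} both $\varphi_j$ and $\varphi_{j+1}$ lie in $\GS^{0,\omega}_\rho$, and so does their difference. Using the elementary inequality $\langle(x,y,\xi)\rangle\le\sqrt{5}\,\langle(x,\xi)\rangle\,\langle x-y\rangle$, together with Leibniz's rule and Lemma~\ref{lemmafistrella}, a routine calculation shows that $b_j(x,y,\xi):=(\varphi_j-\varphi_{j+1})(x,\xi)\,a(x,y,\xi)$ belongs to $\GA^{m,\omega}_\rho$, with amplitude seminorm bounds uniform in $j$. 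Theorem~\ref{pseudotheo} then guarantees that each $A_j\colon\Sch_\omega(\R^d)\to\Sch_\omega(\R^d)$ is linear and continuous.

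For the convergence, the key ingredient is the telescoping identity $\sum_{j=0}^{N}(\varphi_j-\varphi_{j+1})=1-\varphi_{N+1}$ (which uses $\varphi_0\equiv 1$). It yields $A(u)-\sum_{j=0}^{N}A_j(u)=R_N(u)$, where $R_N$ is the pseudodifferential operator with amplitude $c_N(x,y,\xi):=\varphi_{N+1}(x,\xi)\,a(x,y,\xi)\in\GA^{m,\omega}_\rho$. By \eqref{EqInequalityPsi1}, $c_N$ vanishes whenever $\langle(x,\xi)\rangle<2A_{k,N+1}$, where $k$ is the unique index with $j_k\le N+1<j_{k+1}$; the standing assumption $\tfrac{k}{j}\varphi^{\ast}(j/k)\ge k$ then forces $A_{k,N+1}\to\infty$ as $N\to\infty$. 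It thus suffices to prove that $|R_N(u)|_\lambda\to 0$ as $N\to\infty$ for every $\lambda>0$, where $|\cdot|_\lambda$ is the seminorm from Remark~\ref{Notaflambda}.

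To estimate $|R_N(u)|_\lambda$, I would repeat the integration-by-parts scheme from the proof of Proposition~\ref{LemmaIteratedIntegralCauchy}: fixing $\beta\in\N_0^d$ and $x\in\R^d$, apply a suitable power $G^r(D)$ of the ultradifferential operator of Theorem~\ref{TheoLangenbruchSeveralVariables} in both the $\xi$ and $y$ variables to produce integrable factors $|G^r(\xi)|^{-1}$ and $|G^r(y-x)|^{-1}$, and distribute derivatives via Leibniz onto $a$, $\varphi_{N+1}$, and $u$. The main obstacle is turning the support information on $\varphi_{N+1}$ into quantitative smallness in $N$: on $\supp \varphi_{N+1}$ one has $\omega(\langle(x,\xi)\rangle)\ge\omega(2A_{k,N+1})$, which enables the trade $e^{m\omega(\xi)}\le e^{(m+\mu)\omega(\xi)}\,e^{-\mu\omega(2A_{k,N+1})}$ for any $\mu>0$. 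Once the auxiliary parameters in the scheme of Proposition~\ref{LemmaIteratedIntegralCauchy} are tuned to $\lambda$ and $\mu$, the remaining integrals converge absolutely, and the overall bound acquires the factor $e^{-\mu\omega(2A_{k,N+1})}$, which tends to $0$ as $N\to\infty$. This yields $|R_N(u)|_\lambda\to 0$ for every $\lambda>0$ and hence the desired convergence in $\Sch_\omega(\R^d)$.
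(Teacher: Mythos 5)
Your argument is correct in substance and shares its core step with the paper: both proofs rest on the telescoping identity $\sum_{j=0}^{N}(\varphi_j-\varphi_{j+1})=1-\varphi_{N+1}$ and on the fact that the partial sums are pseudodifferential operators with amplitude $(1-\varphi_{N+1})(x,\xi)\,a(x,y,\xi)$. Where the two diverge is in how the convergence is extracted. The paper observes that $(1-\varphi_{N+1})(x,\xi)=\Phi\big((x,\xi)/A_{n,N+1}\big)$ with $\Phi\in\D_{(\sigma)}(\R^{2d})\subset\Sch_{\omega}(\R^{2d})$ and $\Phi(0,0)=1$, so the $N$-th partial sum is exactly the regularized integral $A_{\delta_N,\Phi}(u)$ with $\delta_N=A_{n,N+1}^{-1}\to0$; convergence to $A(u)$ then follows verbatim from the machinery already established in Proposition~\ref{LemmaIteratedIntegralCauchy} and Theorem~\ref{pseudotheo}, with no new estimates. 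You instead pass to the complementary remainder $R_N$ with amplitude $\varphi_{N+1}(x,\xi)\,a(x,y,\xi)$ and rerun the integration-by-parts scheme, using the support condition \eqref{EqInequalityPsi1} to extract a factor tending to zero. This is more work (you must redo the estimates of Proposition~\ref{LemmaIteratedIntegralCauchy}, checking that the bounds on $D^{\alpha}_xD^{\beta}_{\xi}\varphi_{N+1}$ from \eqref{EqEstimateDerivativesPsijn2} are uniform in $N$, which they are), but it is self-contained and yields an explicit rate of decay in $N$, which the paper's citation-based argument does not.

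One step needs repair. The trade $e^{m\omega(\xi)}\le e^{(m+\mu)\omega(\xi)}e^{-\mu\omega(2A_{k,N+1})}$ is equivalent to $\omega(\xi)\ge\omega(2A_{k,N+1})$, i.e.\ to $|\xi|\ge 2A_{k,N+1}$, and this does \emph{not} follow from the support condition $\langle(x,\xi)\rangle\ge 2A_{k,N+1}$ (the point $(x,\xi)$ may have $|\xi|$ small and $|x|$ large). The correct move is to sacrifice decay in $x$ and $\xi$ jointly: by \eqref{EqOmegaDiff} and \eqref{EqOmegaCorchetex} one has $\omega(x)+\omega(\xi)\ge\frac1{L}\,\omega\big(\langle(x,\xi)\rangle\big)-C\ge\frac1{L}\,\omega(2A_{k,N+1})-C$ on $\supp\varphi_{N+1}$, so after arranging (via the choice of the power of $G$) a prefactor $e^{-M\omega(x)}e^{-M\omega(\xi)}$ with $M$ large, one peels off $e^{-\frac{\mu}{L}\omega(2A_{k,N+1})+\mu C}$ while keeping enough decay for the integrals and for the weight $e^{\lambda\omega(x)}$ in the seminorm $|\cdot|_{\lambda}$. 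With this correction the argument closes as you intend.
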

\begin{proof}
For $j_n \leq j < j_{n+1}$, it is not difficult to see that $\big( \varphi_j - \varphi_{j+1}\big)(x,\xi) a(x,y,\xi)\in \GA^{m,\omega}_{\rho}$. 
We have, for $u \in \mathcal{S}_{\omega}(\R^{d})$,
\begin{eqnarray*}
\sum_{j=0}^{\infty} A_j(u)= \lim_{N \to \infty} \int_{\mathbb{R}^d} \int_{\mathbb{R}^d} e^{i(x-y)\xi} \big(1-\varphi_{N+1}(x,\xi)\big) a(x,y,\xi) u(y) dy d\xi.
\end{eqnarray*} Now, we observe that, for each $(x,\xi)\in \R^{2d}$,
\[ (1-\varphi_{N+1})(x,\xi) = \Phi\Big(\frac{(x,\xi)}{A_{n,N+1}}\Big), \] where $\Phi\in\D_{(\sigma)}(\R^{2d}) \subset \Sch_{\omega}(\R^{2d})$ is like in formula \eqref{psi-function}. Hence, $\Phi(0,0)=1$. Moreover, $A_{n,N+1}\to \infty$ as $N\to \infty$. Therefore, proceeding as in the proof of Theorem~\ref{pseudotheo} we have
\[ A(u)(x) = \lim_{N \to \infty} \int \Big( \int \big( 1-\varphi_{N+1}\big)(x,\xi) e^{i(x-y)\xi} a(x,y,\xi) u(y) dy \Big) d\xi, \] and the result follows.
\end{proof}

Below, we denote sometimes $\Sch_{\omega}(\R^{d})$ by $\Sch_{\omega}.$

\begin{prop}\label{LemmaPseudoDifferentialOperatorLimit}
Let $\sum_{j=0}^{\infty} p_j(x,\xi)$ be a formal sum in $\FGS_{\rho}^{m,\omega}$ and $(C_n)_{n}$ be the corresponding sequence which appears in~\eqref{EqFormalSum}. Let $(j_n)_{n}$ be a sequence as in Theorem~\ref{TheoEquivalentSymbol} which also satisfies that $\frac{n}{j}\varphi^{\ast}\big(\frac{j}{n}\big) \geq \max\{ n,\log C_n\}$ for $j \geq j_n$, $n\in\N$. We set
\[ p(x,\xi) := \sum_{j=0}^{\infty} \varphi_j(x,\xi) p_j(x,\xi), \] which is a symbol, where $\varphi_{j}$ is the function in \eqref{not18}. Then, its corresponding pseudodifferential operator $P(x,D)$ is the limit in $L\big(\mathcal{S}_{\omega}, \mathcal{S}'_{\omega}\big)$ of the sequence of operators
\[ S_N: \mathcal{S}_{\omega}(\R^{d}) \to \mathcal{S}_{\omega}(\R^{d}),\quad N\in\N, \] where each $S_N$ is a pseudodifferential operator with symbol
\[ \sum_{j=0}^N \big( \varphi_j - \varphi_{j+1} \big)(x,\xi) \Big(\sum_{l=0}^j p_l(x,\xi) \Big),\quad N\in\N. \]
\end{prop}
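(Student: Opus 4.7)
The plan is to reduce the statement to an algebraic identity for the symbol $\sigma_N$ of $S_N$, combined with estimates already established in the proof of Theorem~\ref{TheoEquivalentSymbol}. Abel summation applied to $a_j := \sum_{l=0}^j p_l$ (with the convention $a_{-1}=0$) gives the telescoping
\[
\sum_{j=0}^N (\varphi_j-\varphi_{j+1})\,a_j \;=\; \sum_{j=0}^N \varphi_j(a_j-a_{j-1}) \;-\; \varphi_{N+1}\,a_N,
\]
so that, using $a_j-a_{j-1}=p_j$,
\[
\sigma_N(x,\xi) \;=\; \sum_{j=0}^N \varphi_j(x,\xi)\,p_j(x,\xi) \;-\; \varphi_{N+1}(x,\xi)\sum_{l=0}^N p_l(x,\xi),
\]
and consequently
\[
p - \sigma_N \;=\; \sum_{j>N}\varphi_j\,p_j \;+\; \varphi_{N+1}\sum_{l=0}^N p_l.
\]

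For the first piece, I would reuse the estimate established in the proof of Theorem~\ref{TheoEquivalentSymbol}: for every $n\in\N$ there is $D_n>0$ with
\[
\big|D^{\alpha}_x D^{\beta}_{\xi}(\varphi_j p_j)(x,\xi)\big| \leq D_n\, \langle(x,\xi)\rangle^{-\rho|\alpha+\beta|}\, e^{n\rho\varphi^{\ast}\big(\frac{|\alpha+\beta|}{n}\big)}\, e^{m\omega(x)+m\omega(\xi)}\,(2R)^{-\rho j},
\]
valid on all of $\R^{2d}$. Summing over $j>N$ produces a geometric tail $\sum_{j>N}(2R)^{-\rho j}$ that vanishes as $N\to\infty$. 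For the second piece, the key observation is that on $\supp\varphi_{N+1}$ one has $\langle(x,\xi)\rangle\geq 2A_{n,N+1}$ for the unique $n$ with $j_n\leq N+1<j_{n+1}$; since $\varphi^{\ast}(t)/t$ is increasing, this yields $\log(\langle(x,\xi)\rangle/(2R))\geq (n/l)\varphi^{\ast}(l/n)$ for every $l\leq N$, so the formal-sum estimate~\eqref{EqFormalSum} is simultaneously applicable to all the $p_l$ with $l\leq N$. A Leibniz expansion, combined with the hypothesis $\tfrac{n}{j}\varphi^{\ast}(j/n)\geq\max\{n,\log C_n\}$ and the bounds on $\varphi_{N+1}$ inherited from \eqref{EqEstimateDerivativesPsijn2}, then yields an analogous bound with a geometric gain $(2R)^{-\rho(N+1)}$.

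Finally, I would convert these symbol estimates into operator convergence. Fix $u,v\in\Sch_{\omega}(\R^{d})$. Since $u,v$ decay faster than any $e^{-\mu\omega}$, the pairing
\[
\langle (P-S_N)u,\,v\rangle \;=\; \iint e^{ix\xi}\bigl(p(x,\xi)-\sigma_N(x,\xi)\bigr)\,\hat{u}(\xi)\,v(x)\,dx\,d\xi
\]
is absolutely convergent with integrand dominated, uniformly in $N$, by a fixed integrable function produced by the bounds above. On any compact $K\subset\R^{2d}$, $A_{n,N+1}\to\infty$ forces $\varphi_{N+1}|_{K}\equiv0$ and $\sum_{j>N}\varphi_j p_j|_{K}\equiv 0$ for $N$ large, so $\sigma_N\to p$ pointwise. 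Dominated convergence then yields $\langle S_N u,v\rangle\to\langle Pu,v\rangle$ for all $v\in\Sch_{\omega}(\R^{d})$, i.e.\ $S_N\to P(x,D)$ in $L(\Sch_{\omega},\Sch'_{\omega})$. The main obstacle is the second error term $\varphi_{N+1}\sum_{l\leq N}p_l$: one must exploit that the support of $\varphi_{N+1}$ activates the formal-sum estimate \emph{simultaneously} for every $l\leq N$, and combine this with the growth condition imposed on $(j_n)_n$ in order to get bounds that are summable in $l$ while decaying uniformly in $N$.
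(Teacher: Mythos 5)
Your route is essentially the paper's: the same telescoping identity $\sigma_N=\sum_{j\le N}\varphi_jp_j-\varphi_{N+1}\sum_{l\le N}p_l$, the same two error terms, and the same reduction of convergence in $L(\Sch_\omega,\Sch'_\omega)$ to convergence of the pairings $\langle (P-S_N)u,v\rangle$ written as $\iint e^{ix\xi}(p-\sigma_N)\widehat{u}(\xi)v(x)\,dx\,d\xi$ (the paper justifies that testing against pairs in $\Sch_\omega$ suffices via the Fr\'echet--Montel property; you should say this too). Your treatment of $\sum_{j>N}\varphi_jp_j$ is fine, with the caveat that the constant $D_n$ in the estimate you quote varies with the block $j_n\le j<j_{n+1}$, so the ``geometric tail'' is really $\sum_{n}D_n\sum_{j}(2R)^{-\rho j}$ and you need either the summability $\overline{D}_{n+1}\le\overline{D}_n/2$ built into the choice of $(j_n)_n$ in Theorem~\ref{TheoEquivalentSymbol}, or the absorption mechanism described next.

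The one step that fails as stated is the claimed ``geometric gain $(2R)^{-\rho(N+1)}$'' for $\varphi_{N+1}\sum_{l=0}^{N}p_l$. On $\supp\varphi_{N+1}$ the estimate~\eqref{EqFormalSum} gives $|p_l|\le C_n(2R)^{-\rho l}e^{m\omega(x)}e^{m\omega(\xi)}$ for each $l\le N$ \emph{separately}; the $l=0$ term already contributes $O\big(C_ne^{m\omega(x)}e^{m\omega(\xi)}\big)$ with no gain at all, so the sum over $l$ is only bounded by a constant times $C_n$ --- and $C_n\to\infty$ is allowed, so this is not even uniformly bounded in $N$, which is exactly what your dominated-convergence step requires. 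The fix (and what the paper actually does) is to exploit that on $\supp\varphi_{N+1}$ one has $\langle(x,\xi)\rangle\ge 2Re^{\frac{n}{N+1}\varphi^{\ast}(\frac{N+1}{n})}$ with $j_n\le N+1<j_{n+1}$: after pairing with $\widehat{u}(\xi)v(x)$, the surplus decay of the test functions gives $e^{-L\omega(x)-L\omega(\xi)}\le e^{-\omega(x,\xi)}\le\langle(x,\xi)\rangle^{-1}\le (2R)^{-1}e^{-\frac{n}{N+1}\varphi^{\ast}(\frac{N+1}{n})}$, and the hypothesis $\frac{n}{N+1}\varphi^{\ast}\big(\frac{N+1}{n}\big)\ge\max\{n,\log C_n\}$ lets this factor absorb $C_n$ (and produce decay as $N\to\infty$). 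With that replacement your uniform integrable domination holds and the rest of your argument goes through.
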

\begin{proof}
By Theorem~\ref{TheoEquivalentSymbol}, the function $p(x,\xi)$ is a symbol. Moreover, for each $j \in \mathbb{N}_0$, one can show that
\[ \big( \varphi_j - \varphi_{j+1} \big)(x,\xi) \Big(\sum_{l=0}^j p_l(x,\xi) \Big) = \sum_{l=0}^{j} \big( \varphi_j - \varphi_{j+1} \big)(x,\xi) p_l(x,\xi)\] is also a global symbol in $\GS^{m,\omega}_{\rho}$. 
Hence, the function
\[ \sum_{j=0}^{N} \big( \varphi_j - \varphi_{j+1} \big)(x,\xi) \Big( \sum_{l=0}^{j} p_l(x,\xi) \Big) = \sum_{j=0}^{N} \varphi_j(x,\xi) p_j(x,\xi) - \varphi_{N+1}(x,\xi) \sum_{l=0}^{N} p_l(x,\xi) \] is a global symbol in $\GS^{m,\omega}_{\rho}$ since it is a finite sum of global symbols.

 Now, we prove that $S_N \to P$ in $L\big(\mathcal{S}_{\omega}, \mathcal{S}'_{\omega}\big)$ as $N\to+\infty$. Since $\Sch_{\omega}$ is a Fr\'echet-Montel space, it is enough to show that, for any $f,u \in \mathcal{S}_{\omega}$, \[ \langle (S_N-P)f,u \rangle \to 0\ \ \mbox{ as }N\to+\infty.\] The operators $P$ and $S_{N}$, $N=1,2,\ldots$, act continuously from $\mathcal{S}_{\omega}$ into itself. So, we have $(S_N-P)f \in \mathcal{S}_{\omega}$ and
\begin{eqnarray*}\label{S_N}
\lefteqn{\langle(S_N-P)f,u\rangle = \int (S_N-P)f(x) u(x) dx}\\
&&=\int\Big(\int e^{i x\xi}\Big(\sum_{j=0}^{N} \varphi_j(x,\xi) p_j(x,\xi) - \varphi_{N+1}(x,\xi) \sum_{l=0}^{N} p_l(x,\xi)-p(x,\xi)\Big)\hat{f}(\xi)d\xi \Big)u(x)dx\nonumber
\end{eqnarray*}
for each $f,u \in \mathcal{S}_{\omega}$.
%
We will see that for each $f,u \in \mathcal{S}_{\omega}$:
\begin{enumerate}
\item[a)] $\int\Big(\int e^{i x\xi}\Big(\sum_{j=N+1}^{\infty} \varphi_j(x,\xi) p_j(x,\xi) \Big)\hat{f}(\xi)d\xi \Big)u(x)dx\to 0$, and
\item[b)] $\int\Big(\int e^{i x\xi}\Big( \varphi_{N+1}(x,\xi) \sum_{l=0}^{N} p_l(x,\xi)\Big)\hat{f}(\xi)d\xi \Big)u(x)dx\to 0$
\end{enumerate}
when $N\to \infty.$

First, since $f,u \in \mathcal{S}_{\omega}$, there exists a constant $D>0$ depending on $m$ and $L$ (the constant of \eqref{EqOmegaDiff}) such that (Definition~\ref{def3})
\begin{eqnarray*}
|\widehat{f}(\xi)| &\leq& D e^{-(m+L+1)\omega(\xi)}, \\
|u(x)| &\leq& D e^{-(m+L+1)\omega(x)}.
\end{eqnarray*}

Now, when $\varphi_j(x,\xi)\neq 0$ and $j_{n}\le j< j_{n+1}$, we have $\log\big(\frac{\langle(x,\xi)\rangle}{2R}\big) \geq \frac{n}{j}\varphi^{\ast}\big(\frac{j}{n}\big)$, and for the selected sequence $(C_{n})_{n}$, we obtain the estimate
\[ |p_j(x,\xi)| \leq C_n e^{m\omega(x)} e^{m\omega(\xi)} \langle(x,\xi)\rangle^{-\rho j} e^{n\rho \varphi^{\ast}\big(\frac{j}{n}\big)} \leq C_n e^{m\omega(x)} e^{m\omega(\xi)} (2R)^{-\rho j}. \]
Hence (since $|\varphi_{j}(x,\xi)|\le 2$), we have
$$
|u(x) \varphi_{j}(x,\xi) p_{j}(x,\xi) \hat{f}(\xi)|\le 2D^{2} C_{n} (2R)^{-\rho j} e^{-(L+1)(\omega(x)+\omega(\xi))}.
$$
Moreover, we observe that $\omega(x,\xi) \leq L\omega(x) + L\omega(\xi) + L$ (by \eqref{EqOmegaDiff}), and since $\log(t) = o(\omega(t))$ for $t \to \infty$, for $(x,\xi) \in \supp \varphi_j$, we can assume (for $j$ big enough)
\begin{eqnarray*}
\label{eomega}
e^{-L} e^{-L\omega(x)} e^{-L\omega(\xi)} \leq e^{-\omega(x,\xi)}\le \frac{1}{\langle (x,\xi)\rangle} \le \frac1{2R e^{\frac{n}{j} \varphi^{\ast}\big(\frac{j}{n}\big)}}.
\end{eqnarray*}
By these estimates, and taking into account that $\log C_n \leq \frac{n}{j}\varphi^{\ast}\big(\frac{j}{n}\big)$ for $n\in\N$ and $j_{n}\le j<j_{n+1}$, we get for $j_{l}\le N+1<j_{l+1}$,
\begin{eqnarray*}
\sum_{j=N+1}^{\infty} |u(x) \varphi_{j}(x,\xi) p_{j}(x,\xi) \hat{f}(\xi)|\le 2D^{2} e^{L} e^{-(\omega(x)+\omega(\xi))}\sum_{n=l}^{\infty} \sum_{j=j_{n}}^{j_{n+1}-1} \frac{C_{n}}{(2R)^{\rho j}e^{\frac{n}{j}\varphi^{\ast}(\frac{j}{n})}},
\end{eqnarray*}
which proves a) since the integral $\iint e^{-(\omega(x)+\omega(\xi))} d\xi dx$ is convergent.

To see b), given $N$ we take $n$ with $j_{n}\le N+1<j_{n+1}$ and observe that $\varphi_{N+1}(x,\xi)\neq 0$ implies $\log\big(\frac{\langle(x,\xi)\rangle}{2R}\big) \geq \frac{n}{N+1}\varphi^{\ast}\big(\frac{N+1}{n}\big)$. As before, $|\varphi_{j}(x,\xi)|\le 2$ and (for $N$ big enough) $$e^{-\omega(x,\xi)}\le \frac{1}{\langle (x,\xi)\rangle} \le \frac1{2R e^{\frac{n}{N+1} \varphi^{\ast}\big(\frac{N+1}{n}\big)}},$$ so we obtain
\begin{eqnarray*}
\big|u(x)\varphi_{N+1}(x,\xi) \big(\sum_{j=0}^{N} p_{j}(x,\xi)\big)\hat{f}(\xi)\big|&\le& 2D^{2}C_{n} e^{L}e^{-(\omega(x)+\omega(\xi))}e^{-\frac{n}{N+1} \varphi^{\ast}\big(\frac{N+1}{n}\big)}\sum_{j=0}^{N} \frac1{(2R)^{\rho j}}\\
&\le & C e^{-(\omega(x)+\omega(\xi))}e^{-\frac{n}{N+1} \varphi^{\ast}\big(\frac{N+1}{n}\big)},
\end{eqnarray*}
where $C:=2D^{2}C_{n} e^{L} \sum_{j=0}^{\infty} \frac1{(2R)^{\rho j}}$. This concludes the proof, since $j_{n}\le N+1<j_{n+1}$ implies $$\frac{n}{N+1} \varphi^{\ast}\big(\frac{N+1}{n}\big)\ge n.$$
\end{proof}

\subsection{Properties of formal sums}\label{SectionPropertiesFormalSums}
The following results are easy to check:
\begin{exam}\label{ExamFormalSum1}
Let $a(x,y,\xi)$ be an amplitude in $\GA^{m,\omega}_{\rho}$ and let $p_j(x,\xi) := \sum_{|\alpha|=j} \frac1{\alpha!} D^{\alpha}_{\xi} \partial^{\alpha}_y a(x,y,\xi) \left. \right|_{y=x}$. Then the series $\sum_{j=0}^{\infty} p_j(x,\xi)$ is a formal sum in $\FGS^{2m,\omega}_{\rho}$.
\end{exam}

\begin{prop}\label{PropDefinitionTransposition}
Let $\sum p_j \in \FGS^{m,\omega}_{\rho}$ be a formal sum. Then, the sequence $(q_j)_{j}$ given by $q_j(x,\xi):= \sum_{|\alpha|+h=j} \frac1{\alpha!} \partial^{\alpha}_{\xi} D^{\alpha}_x \big( p_h(x,-\xi) \big)$ is a formal sum for each $j \in \mathbb{N}$.
\end{prop}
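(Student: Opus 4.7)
The plan is to differentiate each $q_j$, apply the bound~\eqref{EqFormalSum} for the formal sum $\sum p_h$ with a doubled parameter, and absorb the extra derivatives into a convergent combinatorial factor using the monotonicity of $t\mapsto \varphi^{\ast}(t)/t$ together with the sub-additivity in Lemma~\ref{lemmafistrella}(2). Smoothness of each $q_{j}$ is immediate since it is a finite sum, so only the growth estimate of Definition~\ref{DefFormalSums} needs to be verified, and we shall see that the constant $R$ already associated with $\sum p_{h}$ also works for $(q_{j})_{j}$.

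A direct chain-rule computation (using $D_{x_k}=-i\partial_{x_k}$, commutativity of partials and the invariance $\langle(x,-\xi)\rangle=\langle(x,\xi)\rangle$, $\omega(-\xi)=\omega(\xi)$) yields
\[
D^{\alpha}_{x} D^{\beta}_{\xi} q_{j}(x,\xi) = \sum_{|\mu|+h=j} \frac{i^{|\mu|}(-i)^{|\alpha|}i^{|\beta|}}{\mu!}\, \big(\partial^{\alpha+\mu}_{x} \partial^{\beta+\mu}_{\xi} p_{h}\big)(x,-\xi).
\]
Fix a target $n$; applying \eqref{EqFormalSum} for $\sum p_{h}$ with parameter $2n$, and noting $|\alpha+\mu|+|\beta+\mu|+h = |\alpha+\beta|+j+|\mu|$, gives
\[
|\partial^{\alpha+\mu}_{x} \partial^{\beta+\mu}_{\xi} p_{h}(x,-\xi)| \leq D_{2n}\, \langle(x,\xi)\rangle^{-\rho(|\alpha+\beta|+j+|\mu|)}\, e^{2n\rho \varphi^{\ast}\big(\frac{|\alpha+\beta|+j+|\mu|}{2n}\big)}\, e^{m\omega(x)} e^{m\omega(\xi)}.
\]
Lemma~\ref{lemmafistrella}(2) with $\lambda=n$, $s=|\alpha+\beta|+j$, $t=|\mu|$ then splits the exponential as
\[
e^{2n\rho \varphi^{\ast}\big(\frac{|\alpha+\beta|+j+|\mu|}{2n}\big)} \leq e^{n\rho \varphi^{\ast}\big(\frac{|\alpha+\beta|+j}{n}\big)}\, e^{n\rho \varphi^{\ast}\big(\frac{|\mu|}{n}\big)}.
\]

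It remains to verify the hypothesis on $\langle(x,\xi)\rangle$ and to bound the resulting series. Assume $\log(\langle(x,\xi)\rangle/R) \geq \frac{n}{j}\varphi^{\ast}(j/n)$. Since $\varphi^{\ast}(t)/t$ is increasing, for each $h \leq j$,
\[
\frac{2n}{h}\varphi^{\ast}\big(\tfrac{h}{2n}\big) \leq \frac{2n}{j}\varphi^{\ast}\big(\tfrac{j}{2n}\big) \leq \frac{n}{j}\varphi^{\ast}\big(\tfrac{j}{n}\big),
\]
so the hypothesis required to invoke \eqref{EqFormalSum} for $p_{h}$ at parameter $2n$ holds automatically; by the same monotonicity, $n\varphi^{\ast}(|\mu|/n) \leq |\mu|\log(\langle(x,\xi)\rangle/R)$ for $|\mu| \leq j$, which gives $\langle(x,\xi)\rangle^{-\rho|\mu|} e^{n\rho \varphi^{\ast}(|\mu|/n)} \leq R^{-\rho|\mu|} \leq 1$. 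Summing $\frac{1}{\mu!}$ times this factor over $|\mu|+h=j$ is then bounded by $\sum_{\mu\in\N_{0}^{d}}\frac{1}{\mu!}=e^{d}$, so setting $D'_{n} := D_{2n}\,e^{d}$ (with the same $R$) establishes \eqref{EqFormalSum} for $(q_{j})_{j}$. The case $j=0$ reduces to $q_{0}(x,\xi)=p_{0}(x,-\xi)$ and is immediate.

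The main bookkeeping obstacle is precisely the simultaneous verification just made: a single hypothesis on $\langle(x,\xi)\rangle$ stated in terms of $j$ and $n$ must trigger \eqref{EqFormalSum} for \emph{all} $h \leq j$ with the \emph{doubled} parameter $2n$. The passage from $n$ to $2n$ is forced by Lemma~\ref{lemmafistrella}(2) to split the exponential factor, and the monotonicity of $\varphi^{\ast}(t)/t$ is exactly what compensates for that doubling in the hypothesis on $\langle(x,\xi)\rangle$.
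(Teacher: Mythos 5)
Your proof is correct and complete. The paper itself offers no proof of this proposition (it is listed among the results declared ``easy to check''), and your argument --- the chain-rule identity for $D^{\alpha}_{x}D^{\beta}_{\xi}q_{j}$, the use of \eqref{EqFormalSum} at the doubled parameter $2n$ combined with Lemma~\ref{lemmafistrella}(2) to split the exponential, and the monotonicity of $t\mapsto\varphi^{\ast}(t)/t$ both to verify the hypothesis on $\langle(x,\xi)\rangle$ for every $h\le j$ and to absorb the leftover factor $\langle(x,\xi)\rangle^{-\rho|\mu|}e^{n\rho\varphi^{\ast}(|\mu|/n)}\le R^{-\rho|\mu|}\le 1$ --- is precisely the standard verification the authors intend, with the same $R$ and the harmless constant $D_{2n}e^{d}$.
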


\begin{defin}
For $\sum p_j \in \FGS^{m,\omega}_{\rho}$, we define $(\sum p_j)^t$ as the formal sum $\sum_j q_j$, where
\[ q_j(x,\xi) := \sum_{|\alpha|+h=j} \frac1{\alpha!} \partial^{\alpha}_{\xi} D^{\alpha}_x\big( p_h(x,-\xi) \big). \] In particular, if $p(x,\xi)$ is a symbol, $p^t(x,\xi)$ denotes the formal sum $\sum_j q_j$ defined by
\[ q_j(x,\xi) := \sum_{|\alpha|=j} \frac1{\alpha!} \partial^{\alpha}_{\xi} D^{\alpha}_x\big( p(x,-\xi) \big).\]
\end{defin}


\begin{prop}\label{PropDefinitionComposition}
Let $\sum p_j \in \FGS^{m_1,\omega}_{\rho}$ and $\sum q_j \in \FGS^{m_2,\omega}_{\rho}$ be two formal sums. The sequence $(r_j)$, defined by $r_j(x,\xi) = \sum_{|\alpha|+k+h=j} \frac1{\alpha!} \partial^{\alpha}_{\xi} p_h(x,\xi) D^{\alpha}_x q_k(x,\xi)$ is a formal sum in $\FGS^{m_1+m_2,\omega}_{\rho}$.
\end{prop}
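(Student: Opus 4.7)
The plan is to verify that the sequence $(r_j)$ satisfies the formal-sum estimate of Definition~\ref{DefFormalSums} with $m=m_1+m_2$. Let $R_1,R_2\geq 1$ be the constants and $(D_n^{(p)})_n$, $(D_n^{(q)})_n$ the associated sequences for $\sum p_h$ and $\sum q_k$. I would set $R:=e\cdot\max\{R_1,R_2\}$ and, for a fixed $n\in\mathbb{N}$, work in the region $\log(\langle(x,\xi)\rangle/R)\geq (n/j)\varphi^{\ast}(j/n)$. The crucial observation is that, for every $h,k$ with $h+k\leq j$, the monotonicity of $t\mapsto\varphi^{\ast}(t)/t$ gives
\[ \frac{n}{j}\varphi^{\ast}\Bigl(\frac{j}{n}\Bigr)\geq \frac{n}{h}\varphi^{\ast}\Bigl(\frac{h}{n}\Bigr)\quad\text{and similarly for }k, \]
so that both formal sum estimates for $p_h$ and $q_k$ are simultaneously available in that region.

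The second step is to apply Leibniz's rule:
\[ D^{\alpha}_x D^{\beta}_{\xi} r_j=\sum_{|\gamma|+h+k=j}\frac{1}{\gamma!}\sum_{\substack{\alpha_1+\alpha_2=\alpha\\ \beta_1+\beta_2=\beta}}\binom{\alpha}{\alpha_1}\binom{\beta}{\beta_1}\bigl(D^{\alpha_1}_x\partial^{\gamma+\beta_1}_{\xi}p_h\bigr)\bigl(D^{\alpha_2+\gamma}_x D^{\beta_2}_{\xi}q_k\bigr), \]
and to estimate each factor with the formal sum estimate applied at parameter $2n$ (instead of $n$). Multiplying the two estimates produces the power $\langle(x,\xi)\rangle^{-\rho(|\alpha|+|\beta|+j+|\gamma|)}$ together with the product of two Young exponentials, which I would collapse using Lemma~\ref{lemmafistrella}(2) into a single factor $e^{2n\rho\varphi^{\ast}((|\alpha|+|\beta|+j+|\gamma|)/(2n))}$. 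Then, applying once more Lemma~\ref{lemmafistrella}(2) with $s=|\alpha|+|\beta|+j$, $t=|\gamma|$ and $\lambda=n$, I get
\[ e^{2n\rho\varphi^{\ast}\left(\tfrac{|\alpha|+|\beta|+j+|\gamma|}{2n}\right)}\leq e^{n\rho\varphi^{\ast}\left(\tfrac{|\alpha|+|\beta|+j}{n}\right)}\cdot e^{n\rho\varphi^{\ast}\left(\tfrac{|\gamma|}{n}\right)}. \]

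The third step is to absorb the extra factors depending on $\gamma$. From our choice of region and the monotonicity argument above, $\langle(x,\xi)\rangle\geq eR_1 e^{(n/|\gamma|)\varphi^{\ast}(|\gamma|/n)}$, and hence
\[ \langle(x,\xi)\rangle^{-\rho|\gamma|}\,e^{n\rho\varphi^{\ast}(|\gamma|/n)}\leq (eR_1)^{-\rho|\gamma|}, \]
which is exponentially small in $|\gamma|$. This factor, together with the $1/\gamma!$, provides absolute convergence of the sum over $\gamma$, and it suffices to note that for each $|\gamma|$ the number of pairs $(h,k)$ with $h+k=j-|\gamma|$ is at most $j+1$, which is polynomial and thus harmless against the exponential decay in $|\gamma|$. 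The combinatorial factor $\binom{\alpha}{\alpha_1}\binom{\beta}{\beta_1}\leq 2^{|\alpha|+|\beta|}$ coming from Leibniz can then be absorbed into the surviving exponential via Lemma~\ref{LemmaTechnical}\,\eqref{EqLemmaTechnical1} (applied with $t=2$), at the cost of replacing $n$ by a slightly smaller parameter before the argument, which amounts to starting the whole calculation at $4n$ rather than $2n$. Putting everything together yields a constant $C_n>0$ such that
\[ |D^{\alpha}_x D^{\beta}_{\xi} r_j(x,\xi)|\leq C_n\langle(x,\xi)\rangle^{-\rho(|\alpha|+|\beta|+j)}\,e^{n\rho\varphi^{\ast}\left(\tfrac{|\alpha|+|\beta|+j}{n}\right)}e^{(m_1+m_2)(\omega(x)+\omega(\xi))} \]
in the region $\log(\langle(x,\xi)\rangle/R)\geq (n/j)\varphi^{\ast}(j/n)$, which is exactly the required estimate.

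The main obstacle will be the careful bookkeeping of the various parameters: starting the formal sum estimates at $4n$ or $2n$ so that the final exponential appears with parameter $n$, verifying that the split using Lemma~\ref{lemmafistrella}(2) works in both directions as needed, and checking that the $\gamma$-dependent terms admit an exponentially decaying bound robust enough to swallow the Leibniz binomials and the $(j+1)$-size choice of $(h,k)$. No new technical result is needed beyond the tools already available in Section~\ref{SectionBackground}, in particular Lemmas~\ref{LemmaTechnical} and~\ref{lemmafistrella}.
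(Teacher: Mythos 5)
The paper states this proposition without proof (it is listed among the results that are ``easy to check''), so I can only compare your argument with the standard computation the authors clearly have in mind, which follows \cite[Section 3]{FGJ2005pseudo}. Your skeleton is exactly that computation and most of it is sound: the region inclusion via the monotonicity of $t\mapsto\varphi^{\ast}(t)/t$ (so that the estimates for $p_h$ and $q_k$ with $h+k\leq j$ are available where needed), the Leibniz expansion, the merging of the two Young exponentials and their re-splitting by Lemma~\ref{lemmafistrella}(2), and the absorption of $\langle(x,\xi)\rangle^{-\rho|\gamma|}e^{n\rho\varphi^{\ast}(|\gamma|/n)}$ into $(eR_1)^{-\rho|\gamma|}$ are all correct and are the heart of the proof.

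There are, however, two places where the absorption mechanism you describe would not close. First, the $(j+1)$ count of pairs $(h,k)$ with $h+k=j-|\gamma|$ is \emph{not} controlled by the exponential decay in $|\gamma|$: for $\gamma=0$ there is no decay at all and you still have $j+1$ terms, so you need a factor decaying in $j$ from elsewhere. Second, bounding $2^{|\alpha+\beta|}$ by $e^{n\varphi^{\ast}(|\alpha+\beta|/n)}e^{n\omega(2)}$ via \eqref{EqLemmaTechnical1} leaves you with an extra Young exponential that can only be merged with $e^{n\rho\varphi^{\ast}((|\alpha+\beta|+j)/n)}$ by convexity, which \emph{doubles} the final parameter instead of keeping it at $n$; starting at $4n$ rather than $2n$ does not repair this, because halving the parameter of $e^{\mu\varphi^{\ast}(y/\mu)}$ does not by itself produce a factor $e^{-cy}$. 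Both defects are cured by the same device, already used repeatedly in the paper (e.g.\ in the proof of Theorem~\ref{TheoFormalKernel}): run the whole computation starting from the parameter $2nL^{\widetilde{p}}$ with $\widetilde{p}$ chosen so that $e^{\widetilde{p}\rho}\geq 2e$, and at the very end apply Lemma~\ref{lemmafistrella}(1) to pass from $nL^{\widetilde{p}}\varphi^{\ast}\big(\frac{|\alpha+\beta|+j}{nL^{\widetilde{p}}}\big)$ down to $n\varphi^{\ast}\big(\frac{|\alpha+\beta|+j}{n}\big)$ at the gain of a genuine factor $e^{-\widetilde{p}(|\alpha+\beta|+j)}$, which swallows both $2^{|\alpha+\beta|}$ and $j+1$. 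With that replacement your proof is complete.
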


\begin{defin}\label{DefinCompositionFormalSum}
For $\sum p_j \in \FGS^{m_1,\omega}_{\rho}$, $\sum q_j \in \FGS^{m_2,\omega}_{\rho}$, we define $(\sum p_j) \circ (\sum q_j) = \sum r_j$, where
\[ r_j(x,\xi) = \sum_{|\alpha|+h+k=j} \frac1{\alpha!} \partial^{\alpha}_{\xi} p_h(x,\xi) D^{\alpha}_x q_k(x,\xi). \]
\end{defin}

\begin{prop}\label{PropComposition1}
If $\sum p_j \sim \sum p'_j$ and $\sum q_j \sim \sum q'_j$, then $(\sum p_j) \circ (\sum q_j) \sim (\sum p'_j) \circ (\sum q'_j)$.
\end{prop}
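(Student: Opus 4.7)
The plan is to exploit transitivity (and symmetry) of $\sim$ to reduce the statement to the single assertion: if $\sum p_j \sim \sum p'_j$ in $\FGS^{m_1,\omega}_\rho$, then $(\sum p_j) \circ (\sum q_j) \sim (\sum p'_j) \circ (\sum q_j)$ in $\FGS^{m_1+m_2,\omega}_\rho$. The analogous step with $\sum q_j \sim \sum q'_j$ on the right is proved the same way (after noting that the formula in Definition~\ref{DefinCompositionFormalSum} is not symmetric in the two factors, but the required estimates are, since Leibniz moves derivatives from one factor to the other with the same exponential-$\varphi^\ast$ cost, controlled via Lemma~\ref{lemmafistrella}).

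Writing $r_j$ for the composition terms with $p_h$ and $\tilde r_j$ for those with $p'_h$, the key algebraic step is the reorganization
\begin{equation*}
\sum_{j<N}(r_j-\tilde r_j) \;=\; \sum_{|\alpha|+k<N}\frac{1}{\alpha!}\,\partial^{\alpha}_{\xi}\!\Big[\sum_{h<N-|\alpha|-k}(p_h-p'_h)\Big]\,D^{\alpha}_{x}q_k,
\end{equation*}
obtained by fixing $(\alpha,k)$ and summing over $h\in\{0,\dots,N-|\alpha|-k-1\}$. This brings the hypothesis $\sum p_j\sim\sum p'_j$ directly into play on the bracketed partial sum.

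Next I would apply $D^{\beta}_{x}D^{\gamma}_{\xi}$ to the right-hand side, distribute with Leibniz, and estimate each summand
\begin{equation*}
\frac{1}{\alpha!}\binom{\beta}{\beta_1}\binom{\gamma}{\gamma_1}\,D^{\beta_1}_{x}D^{\gamma_1+\alpha}_{\xi}\!\Big[\sum_{h<N-|\alpha|-k}(p_h-p'_h)\Big]\cdot D^{\beta_2+\alpha}_{x}D^{\gamma_2}_{\xi}q_k.
\end{equation*}
Fix $n$; choose an auxiliary index $n'$ large enough (a multiple of $nL$) and let $N_{n'}$ be the constant supplied by the equivalence $\sum p_j\sim\sum p'_j$. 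I would split the outer sum according to whether $N-|\alpha|-k\ge N_{n'}$ or not. In the first case, Definition~\ref{DefEqFormalSums} applied to the bracket and Definition~\ref{DefFormalSums} applied to $q_k$ give, in the region $\log(\langle(x,\xi)\rangle/R)\ge\frac{n}{N}\varphi^{\ast}(\frac{N}{n})$, the bound
\begin{equation*}
\langle(x,\xi)\rangle^{-\rho(|\beta+\gamma|+|\alpha|+(N-|\alpha|-k)+|\alpha|+k)}\,e^{n'\rho\varphi^{\ast}(\cdots)}\,e^{(m_1+m_2)(\omega(x)+\omega(\xi))},
\end{equation*}
and one checks that the exponent on $\langle(x,\xi)\rangle^{-\rho}$ is exactly $|\beta+\gamma|+N+|\alpha|$, the excess $|\alpha|$ producing the geometric factor needed for summability. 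The condition triggering the formal-sum estimates for $p_h,p'_h,q_k$ of smaller index is implied by the condition for index $N$ because $\varphi^{\ast}(t)/t$ is increasing. In the second case ($N-|\alpha|-k<N_{n'}$), the bracket contains at most $N_{n'}$ terms, each of which is bounded termwise using Definition~\ref{DefFormalSums} for $p_h$ and $p'_h$ separately; the fixed number of terms is absorbed into $D_n$.

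The remaining work is bookkeeping: one splits $\varphi^{\ast}$ via Lemma~\ref{lemmafistrella}\,(2) across $|\beta_i|,|\gamma_i|,|\alpha|,N,k$; uses Proposition~\ref{PropTechnical}-type inequalities to absorb $\binom{\beta}{\beta_1}\binom{\gamma}{\gamma_1}/\alpha!$; and exhibits a geometric factor $(2R)^{-\rho|\alpha|}(2R)^{-\rho k}$ ensuring convergence of the double sum over $\alpha$ and $k$ (as in the proof of Theorem~\ref{TheoEquivalentSymbol} and in \eqref{series1}--\eqref{series2}). \emph{The main obstacle} is precisely this combinatorial step: arranging Leibniz multinomials, the $1/\alpha!$ factorials, and the $\varphi^\ast$ budgets so that after summation the estimate reassembles into the single clean form required by Definition~\ref{DefEqFormalSums}, with one $\varphi^\ast$ term of argument $(|\beta+\gamma|+N)/n$. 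Once this fits together, the proof proceeds mechanically and, by transitivity, yields the full statement.
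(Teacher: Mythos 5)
Your skeleton is the natural one (the paper itself offers no proof, labelling this "easy to check", so I can only judge the argument on its own terms): reducing by transitivity to changing one factor at a time, reorganizing $\sum_{j<N}(r_j-\tilde r_j)$ over pairs $(\alpha,k)$ so that the hypothesis acts on the partial sum $\sum_{h<M}(p_h-p'_h)$ with $M:=N-|\alpha|-k$, computing the total decay exponent $|\beta+\gamma|+N+|\alpha|$, and using that $\varphi^{\ast}(t)/t$ is increasing to propagate the region condition from index $N$ down to $M$ and $k$ — all of this is correct, and the case $M\geq N_{n'}$ goes through as you describe.

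The genuine gap is in the case $M<N_{n'}$. Bounding the bracket \emph{termwise} via Definition~\ref{DefFormalSums} gives, for the summand of index $h$, decay $\langle(x,\xi)\rangle^{-\rho(|\beta_1+\gamma_1+\alpha|+h)}$, so the product with the $q_k$ estimate carries total exponent $|\beta+\gamma|+2|\alpha|+h+k$, which falls short of the required $|\beta+\gamma|+N$ by $M-|\alpha|-h$ — up to $N_{n'}-1$ powers of $\langle(x,\xi)\rangle^{-\rho}$ when $h=|\alpha|=0$. This deficit is \emph{not} a constant to be absorbed into $D_n$: on the region $\log\big(\frac{\langle(x,\xi)\rangle}{R}\big)\geq\frac{n}{N}\varphi^{\ast}\big(\frac{N}{n}\big)$ the factor $\langle(x,\xi)\rangle^{\rho(M-|\alpha|-h)}$ is unbounded, and the weight exponents (already exactly $m_1+m_2$) and the $\varphi^{\ast}$ budget leave nothing to pay for it. The fix is a different estimate: write $\sum_{h<M}(p_h-p'_h)=\sum_{h<N_{n'}}(p_h-p'_h)-\sum_{M\leq h<N_{n'}}(p_h-p'_h)$, control the first sum by Definition~\ref{DefEqFormalSums} at level $N_{n'}$ (decay exponent $+N_{n'}\geq+M$) and each of the at most $N_{n'}$ remaining terms by Definition~\ref{DefFormalSums} (decay exponent $+h\geq+M$); then the bracket satisfies the $+M$ bound and this case reduces to the main one. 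A lesser inaccuracy: there is no geometric factor $(2R)^{-\rho k}$ — the only excess decay is $\langle(x,\xi)\rangle^{-\rho|\alpha|}$, so summability in $\alpha$ comes from $1/\alpha!$ (or from $R^{-\rho|\alpha|}$ after trading the excess power against $e^{n'\rho\varphi^{\ast}(|\alpha|/n')}$), while the sum over $k$ has only $N$ terms and the resulting factor $N\leq e^{N}$ must be absorbed by the index shift of Lemma~\ref{lemmafistrella}\,(1), as in \eqref{series1}.
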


\section{Composition of operators and the transpose operator}\label{section-composition}


First, we study the kernel of a pseudodifferential operator and we show  that it behaves like a $\Sch_{\omega}$-function outside of an arbitrary strip around the diagonal, similarly to the local case; see \cite{FGJ2005pseudo,Nic_Rod2010global}.


\subsection{The behaviour of the kernel of a pseudodifferential operator outside the diagonal}

For any $r>0$, we denote
\[ \Delta_r := \big\{ (x,y) \in \mathbb{R}^{2d} : |x-y| < r\big\}. \]

\begin{lema}\label{LemmaExistenceFunctionEOmega}
Given $r>0$, there exists $\chi \in \mathcal{E}_{(\omega)}\big(\mathbb{R}^{2d}\big)$ such that $0 \leq \chi \leq 1$, $\chi(x,y)=1$ if $(x,y) \in \mathbb{R}^{2d} \setminus \Delta_{r}$ and $\chi(x,y)=0$ if $(x,y) \in \Delta_{\frac{r}{2}}$, which satisfies that for every $\lambda>0$ there exists $C_{\lambda}>0$ with
\[ |D^{\alpha}_x D^{\beta}_y \chi(x,y)| \leq C_{\lambda} e^{\lambda \varphi^{\ast}\big(\frac{|\alpha+\beta|}{\lambda}\big)}, \qquad \alpha,\beta \in \mathbb{N}_0^d, \ x,y \in \mathbb{R}^d. \]
\end{lema}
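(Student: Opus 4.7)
The plan is to build $\chi$ as a function of the single variable $z = x-y$. Namely, I would seek a function $\psi \in \mathcal{D}_{(\omega)}(\mathbb{R}^d)$ with $0 \leq \psi \leq 1$, $\psi \equiv 1$ on $\{|z| \leq r/2\}$ and $\psi \equiv 0$ on $\{|z| \geq r\}$, and then set
\[ \chi(x,y) := 1 - \psi(x-y). \]
The support conditions on $\chi$ then follow immediately: $\chi \equiv 0$ on $\Delta_{r/2}$ and $\chi \equiv 1$ on $\mathbb{R}^{2d} \setminus \Delta_{r}$, with $0 \leq \chi \leq 1$.

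The key input is the existence of $\psi$. This is a standard consequence of the non-quasianalyticity $(\beta)$ of $\omega$: by \cite{BMT}, $\mathcal{D}_{(\omega)}(\mathbb{R}^d)$ contains cut-off functions subordinate to any open cover, and in particular a radial bump with the required shape can be produced (for instance by convolving the characteristic function of the annulus between radii $r/2$ and $r$ with an $(\omega)$-ultradifferentiable mollifier). Since $\psi$ has compact support, the defining property of $\mathcal{D}_{(\omega)}$ upgrades to a \emph{global} estimate: for every $\lambda > 0$ there is $C_\lambda > 0$ with
\[ |D^\gamma \psi(z)| \leq C_\lambda\, e^{\lambda \varphi^\ast\big(\frac{|\gamma|}{\lambda}\big)}, \qquad z \in \mathbb{R}^d,\ \gamma \in \mathbb{N}_0^d. \]

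Finally I would transfer these bounds to $\chi$ via the chain rule. Since $\partial_{x_i}[\psi(x-y)] = (\partial_i \psi)(x-y)$ and $\partial_{y_j}[\psi(x-y)] = -(\partial_j \psi)(x-y)$, an induction yields $D^\alpha_x D^\beta_y \psi(x-y) = (-1)^{|\beta|} (D^{\alpha+\beta}\psi)(x-y)$. Hence for $(\alpha,\beta) \neq (0,0)$,
\[ |D^\alpha_x D^\beta_y \chi(x,y)| = |(D^{\alpha+\beta}\psi)(x-y)| \leq C_\lambda\, e^{\lambda \varphi^\ast\big(\frac{|\alpha+\beta|}{\lambda}\big)}, \]
uniformly in $(x,y) \in \mathbb{R}^{2d}$; for $\alpha = \beta = 0$ we use $|\chi(x,y)| \leq 1 = e^{\lambda \varphi^\ast(0)}$ (recall $\varphi^\ast(0) = 0$), which is absorbed into $C_\lambda$.

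There is no real obstacle: the substantive point is the existence of the $\mathcal{D}_{(\omega)}$ cut-off, which is classical under $(\beta)$; everything else reduces to the chain rule and the fact that reducing to a function of $x-y$ converts the local bound on a compactly supported $\psi$ into a global bound on $\mathbb{R}^{2d}$.
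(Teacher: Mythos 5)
Your construction is correct and is essentially the paper's own proof: the paper takes $\varphi\in\mathcal{E}_{(\omega)}(\mathbb{R}^d)$ with $\varphi=0$ on $|\xi|<r/2$ and $\varphi=1$ on $|\xi|\ge r$ and sets $\chi(x,y)=\varphi(x-y)$, which is exactly your $1-\psi(x-y)$. The points you spell out (existence of the $\mathcal{D}_{(\omega)}$ cut-off via non-quasianalyticity, the upgrade from local to global bounds because the function is constant off a compact set, and the chain rule giving bounds in $|\alpha+\beta|$) are precisely what the paper leaves implicit.
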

\begin{proof}
Let $\varphi \in \mathcal{E}_{(\omega)}(\mathbb{R}^{d})$ such that $\varphi(\xi)=0$ if $|\xi| < \frac{r}{2}$, $\varphi(\xi)=1$ if $|\xi| \ge r$, and $0 \leq \varphi \leq 1$. The desired function is $\chi(x,y) = \varphi(x-y)$.
\end{proof}
The next result is crucial for the proof of Theorem~\ref{TheoLarguisimo}. We observe that it is stronger than the ones  given in \cite[Theorem 6.3.3]{Nic_Rod2010global} and \cite[Proposition 5]{Pran2013pseudo}.
\begin{theo}\label{TheoFormalKernel}
Given $r>0$ and an amplitude $a(x,y,\xi) \in \GA^{m,\omega}_{\rho}$, we have  that the \emph{formal} kernel \[ K(x,y) :=  \int_{\mathbb{R}^d} e^{i(x-y)\xi} a(x,y,\xi) d\xi \] satisfies:
\begin{enumerate}
\item $K(x,y) \in C^{\infty}\big(\mathbb{R}^{2d} \setminus \overline{\Delta_r} \big)$,
\item For every $\lambda>0$ there exists $C_{\lambda}>0$ (which depends on $r>0$) such that for all $(x,y) \in \mathbb{R}^{2d} \setminus \Delta_r$ and all $\alpha,\beta \in \mathbb{N}_0^d$, we have
\[ |D^{\alpha}_x D^{\beta}_{y} K(x,y)| \leq C_{\lambda} e^{\lambda \varphi^{\ast}\big(\frac{|\alpha+\beta|}{\lambda}\big)} e^{-\lambda\omega(x)} e^{-\lambda\omega(y)}.\]
\end{enumerate}
\end{theo}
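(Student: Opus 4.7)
My plan is to establish both conclusions together by rewriting $K(x,y)$ via a single integration by parts in $\xi$ using the entire function $G$ from Theorem~\ref{TheoLangenbruchSeveralVariables}. Since $G$ is even and $D_\xi^\sigma e^{i(x-y)\xi}=(x-y)^\sigma e^{i(x-y)\xi}$, we have the identity
\[
e^{i(x-y)\xi}=\frac{1}{G^N(y-x)}\,G^N(-D_\xi)\,e^{i(x-y)\xi},
\]
and for $(x,y)\in\R^{2d}\setminus\overline{\Delta_r}$ the denominator is bounded away from zero by Theorem~\ref{TheoLangenbruchSeveralVariables}\,ii'. Integration by parts then yields, for any $N\in\N$,
\[
K(x,y)=\frac{1}{G^N(y-x)}\int_{\R^d}e^{i(x-y)\xi}\,G^N(D_\xi)\,a(x,y,\xi)\,d\xi.
\]
For a general derivative $D^\alpha_x D^\beta_y K$ I would first use Leibniz to decompose into finitely many terms with factors $\xi^{\alpha_1+\beta_1}$ (from differentiating $e^{i(x-y)\xi}$) and $D^{\alpha_2}_x D^{\beta_2}_y$ of $a$, then perform the same IBP.

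Next I would estimate the integrand. Expanding $G^N(D_\xi)=\sum_\sigma b_\sigma D_\xi^\sigma$ via \eqref{EqEstimationBAlpha}, applying Definition~\ref{amplitude} with a parameter $n$ chosen so that $n = 2NC_1$, and invoking Lemma~\ref{lemmafistrella}(2) yields
\[
n\rho\varphi^{*}(|\sigma|/n)-NC_1\varphi^{*}(|\sigma|/(NC_1))\le(\rho-1)NC_1\varphi^{*}(|\sigma|/(NC_1)),
\]
so the $\sigma$-sum converges as in the calculation \eqref{series1}--\eqref{series2} of Proposition~\ref{LemmaIteratedIntegralCauchy}. The extra factor $|\xi|^{|\alpha_1+\beta_1|}$ coming from $x,y$-differentiation is absorbed via \eqref{EqLemmaTechnical1} into $e^{\lambda\varphi^{*}(|\alpha+\beta|/\lambda)}e^{\lambda\omega(\xi)}$. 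The convergence of the $\xi$-integral and the subsequent smoothness of $K$ off the diagonal come from exploiting the amplitude factor $\langle(x,y,\xi)\rangle^{-\rho|\sigma|}\le\langle\xi\rangle^{-\rho|\sigma|}$ combined with Lemma~\ref{LemmaTechnical}\eqref{EqLemmaTechnical2}, which converts the polynomial decay plus the $\varphi^{*}$-factor into $\omega$-exponential decay sufficient to dominate $e^{m\omega(\xi)}$. Putting everything together produces the schematic bound
\[
|D^\alpha_x D^\beta_y K(x,y)|\le C\,e^{\lambda\varphi^{*}(|\alpha+\beta|/\lambda)}\,|G^N(y-x)|^{-1}\,e^{m(\omega(x)+\omega(y))},
\]
with $|G^N(y-x)|^{-1}\le C^N e^{-NK\omega(y-x)}$ by Corollary~\ref{CorEstimatesUltradifferentialOperator}.

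The last and hardest step is converting the joint factor $e^{m(\omega(x)+\omega(y))-NK\omega(y-x)}$ into the required $e^{-\lambda\omega(x)-\lambda\omega(y)}$. I would do this via a case split: when $\max(|x|,|y|)\le 2|y-x|$, property~$(\alpha)$ (in the form $\omega(x)\le L\omega(y-x)+L\omega(y)+L$ and the symmetric inequality, applied under the size assumption) gives $\omega(x)+\omega(y)\le 2L\omega(y-x)+C_r$, and any $N$ with $NK\ge 2L(m+\lambda)$ closes the estimate. When $\max(|x|,|y|)>2|y-x|$, so that $|x|$ and $|y|$ are comparable and both much larger than $|x-y|$, the $|G^N(y-x)|^{-1}$-factor alone is insufficient; instead, the ratio $\langle x-y\rangle/\langle(x,y,\xi)\rangle\le C_r/\langle(x,y)\rangle$ in the amplitude estimate, raised to $\rho|\sigma|$ and summed via Lemma~\ref{LemmaTechnical}\eqref{EqLemmaTechnical2}, provides the missing $\omega$-decay in both $x$ and $y$ after $N$ and $n$ are chosen large enough in terms of $\lambda,m,L,\rho$. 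This second regime is the main obstacle: the interaction between the $\sigma$-sum, the amplitude's $\varphi^{*}$-factor, and the small ratio $\langle x-y\rangle/\langle(x,y,\xi)\rangle$ has to be orchestrated so that polynomial-in-$\langle(x,y)\rangle$ decay is promoted to exponential-in-$\omega$ decay, exactly as in the infimum trick of Lemma~\ref{LemmaTechnical}\eqref{EqLemmaTechnical2}.
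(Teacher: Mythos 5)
There is a genuine gap, and it sits at the heart of the argument. Your only integration by parts is with $G^{N}(-D_{\xi})$, which produces the factor $1/G^{N}(y-x)$ and hence decay \emph{only} in $\omega(y-x)$. It produces no decay whatsoever in $\xi$: the integrand $G^{N}(D_{\xi})a=\sum_{\sigma}b_{\sigma}D^{\sigma}_{\xi}a$ contains the term $b_{0}\,a(x,y,\xi)$, which still grows like $e^{m\omega(\xi)}$, so the $d\xi$-integral does not converge and the smoothness claim in part (1) is not established. Your appeal to Lemma~\ref{LemmaTechnical}\eqref{EqLemmaTechnical2} here is a misuse of the infimum trick: that lemma applies when a \emph{single} quantity is bounded by $t^{-N}e^{k\varphi^{\ast}(N/k)}$ for \emph{every} $N$, so one may choose the best $N$; it cannot be applied to a \emph{sum} over $\sigma$, which is bounded below by its $\sigma=0$ term. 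For the same reason the decay $e^{-\lambda\omega(x)}e^{-\lambda\omega(y)}$ is out of reach: near the boundary of $\Delta_{r}$, with $|x|,|y|$ large and $|x-y|\approx r$, the factor $|G^{N}(y-x)|^{-1}$ is merely $O(1)$ and cannot absorb $e^{m\omega(x)+m\omega(y)}$. Your second regime does not repair this, because the inequality $\langle x-y\rangle/\langle(x,y,\xi)\rangle\leq C_{r}/\langle(x,y)\rangle$ is false (take $|x-y|$ comparable to $\max(|x|,|y|)$ and $\xi$ bounded: the left side is of order $1$).

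The missing idea is the one the paper builds the proof around: since $(x,y)\notin\Delta_{r}$ forces $|x_{l}-y_{l}|\geq c_{0}$ for some coordinate $l$, one integrates by parts $N$ times in the single variable $\xi_{l}$, picking up $|x_{l}-y_{l}|^{-N}$; the amplitude estimate then gives $\bigl(\langle x-y\rangle/\langle(x,y,\xi)\rangle\bigr)^{\rho N_{2}}/|x_{l}-y_{l}|^{N}\leq(\sqrt{2}A)^{\cdots}\langle(x,\xi)\rangle^{-\rho N_{2}}$ because $\langle x-y\rangle\leq A|x_{l}-y_{l}|$ off the diagonal. This bound holds for \emph{every} $N$, so Lemma~\ref{LemmaTechnical2} legitimately applies and converts it into $e^{-2\rho(k-1)\omega(\langle(x,\xi)\rangle)}$, which simultaneously makes the $\xi$-integral converge, dominates $e^{m\omega(\xi)}$ and $e^{m\omega(x)}$, and yields the $e^{-\lambda\omega(x)}$ decay; only then does the second integration by parts with $G^{s}(-D_{\xi})$ supply $e^{-sC_{2}\omega(y-x)}$, which combined with the $x$-decay and $\omega(y)\leq L\omega(y-x)+L\omega(x)+L$ gives the $e^{-\lambda\omega(y)}$ factor. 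Finally, note that the formal kernel must first be regularized (the paper uses the cutoffs $\Psi(\cdot/2^{n})$ and a telescoping Cauchy-sequence argument in $\Sch_{\omega}(\R^{2d})$) before any integration by parts is legitimate; your manipulations are performed on a divergent integral.
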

\begin{proof} Let $\sigma$ be a weight function as in Lemma~\ref{Lemma158TesisDavid}(2) with $a=\rho$. We consider $\Psi \in \mathcal{D}_{(\sigma)}\big(\mathbb{R}^{2d}\big)$ such that $\Psi(x,\xi)=1$ if $\langle(x,\xi)\rangle \leq 2$ and
$\Psi(x,\xi)=0$ if $\langle(x,\xi)\rangle \geq 3$. We write
\[ K_n(x,y) = \int_{\mathbb{R}^d} e^{i(x-y)\xi} a(x,y,\xi) \Psi\big(\frac{x}{2^n},\frac{\xi}{2^n}\big) d\xi. \] 

We denote by $A_n$ the operator associated to the kernel $K_n$. By Theorem~\ref{pseudotheo}, it is easy to see  that $K_n \to K$ in $\mathcal{S}'_{\omega}\big(\mathbb{R}^{2d}\big)$. 

Given $(x,y) \in \mathbb{R}^{2d} \setminus \Delta_r$, there is $c_0>0$ independent of $(x,y)\notin\Delta_{r}$ such that $|x-y|_{\infty} \geq c_0$. We can assume that for a given point $(x,y)\notin\Delta_{r}$, $|x-y|_{\infty}=|x_{l}-y_{l}|$ for some $1\le l\le d$.
We will proceed similarly to the proof of \cite[Theorem 2.17]{FGJ2005pseudo}, but here we need to apply a further integration by parts. We have
\begin{eqnarray*}
\lefteqn{ D^{\alpha}_x D^{\gamma}_y \Big( K_n(x,y) - K_{n+1}(x,y) \Big) = \sum_{\alpha_1+\alpha_2+\alpha_3=\alpha} \sum_{\mu \leq \gamma} \frac{\alpha!}{\alpha_1! \alpha_2! \alpha_3!} \binom{\gamma}{\mu} (-1)^{|\mu|} \times} \\
&& \times \int_{\mathbb{R}^d} e^{i(x-y)\xi} \xi^{\alpha_1+\mu} D^{\alpha_2}_x D^{\gamma-\mu}_y a(x,y,\xi) D^{\alpha_3}_x\Big( \Psi\big( \frac{(x,\xi)}{2^n}\big) - \Psi\big(\frac{(x,\xi)}{2^{n+1}}\big)\Big) d\xi.
\end{eqnarray*} We fix $\lambda \in \mathbb{N}$ and take $k>\lambda$ to determine later. We integrate by parts $N$ times, $N \in \mathbb{N}$, to get
\begin{equation}\label{int-KN}
\begin{split}
\lefteqn{ D^{\alpha}_x D^{\gamma}_y \Big( K_n(x,y) - K_{n+1}(x,y) \Big) = \sum_{\alpha_1+\alpha_2+\alpha_3=\alpha} \sum_{\mu \leq \gamma} \frac{\alpha!}{\alpha_1! \alpha_2! \alpha_3!} \binom{\gamma}{\mu}
\frac{(-1)^{N+|\mu|}}{|x_l-y_l|^N} \times }\\
& \ \ \ \ \ \ \times \int_{\mathbb{R}^d} e^{i(x-y)\xi} D^{N}_{\xi_l} \Big[ \xi^{\alpha_1+\mu} D^{\alpha_2}_x D^{\gamma-\mu}_y a(x,y,\xi) D^{\alpha_3}_x\Big( \Psi\big( \frac{(x,\xi)}{2^n}\big) - \Psi\big(\frac{(x,\xi)}{2^{n+1}}\big)\Big)
\Big] d\xi \\
& \ \ \ = \sum_{\substack{\alpha_1+\alpha_2+\alpha_3=\alpha \\ \mu \leq \gamma}} \sum_{\substack{N_1+N_2+N_3=N \\ N_1 \leq (\alpha_1)_l + \mu_l}} \frac{\alpha!}{\alpha_1! \alpha_2! \alpha_3!} \binom{\gamma}{\mu}
\frac{(-1)^{N+|\mu|}}{|x_l-y_l|^N} \frac{N!}{N_1! N_2! N_3!} \frac{\big((\alpha_1)_l + \mu_l\big)!}{\big((\alpha_1)_l + \mu_l - N_1 \big)!} \times \\
& \ \ \ \ \ \  \times \int_{\mathbb{R}^d} e^{i(x-y)\xi} \xi^{\alpha_1+\mu-N_1e_l} D^{\alpha_2}_x D^{\gamma-\mu}_y D^{N_2}_{\xi_l} a(x,y,\xi) D^{\alpha_3}_x D^{N_3}_{\xi_l} \big( \textstyle\Psi\big(\frac{(x,\xi)}{2^n}\big) -
\Psi\big(\frac{(x,\xi)}{2^{n+1}}\big)\big) d\xi.\nonumber
\end{split}
\end{equation}
Now, we integrate by parts again using an ultradifferential operator $G(D)$ as in the proof of Proposition~\ref{LemmaIteratedIntegralCauchy}. For a suitable power $G(D)^s$ of $G(D)$, $s \in \mathbb{N}$ depending on $\lambda$ to be determined, we use the formula
\[ e^{i(x-y)\xi} = \frac1{G^{s}(y-x)} G^{s}\big(-D_{\xi}\big) e^{i(x-y)\xi}\] to obtain 
\begin{equation}\label{multisum}
\begin{split}
\lefteqn{D^{\alpha}_x D^{\gamma}_y \Big( K_n(x,y) - K_{n+1}(x,y) \Big) = \sum_{\substack{\alpha_1+\alpha_2+\alpha_3=\alpha \\ \mu \leq \gamma}} \frac{\alpha!}{\alpha_1! \alpha_2! \alpha_3!} \binom{\gamma}{\mu}
\frac{(-1)^{N+|\mu|}}{|x_l-y_l|^N} \times }\\
& \ \ \ \times \sum_{\delta \in\N_{0}^d} b_{\delta} \sum_{\substack{N_1+N_2+N_3=N \\ N_1 \leq (\alpha_1)_l + \mu_l}} \frac{N!}{N_1! N_2! N_3!} \sum_{\substack{\delta_1+\delta_2+\delta_3=\delta \\ \delta_1 \leq \alpha_1+\mu-N_1e_l}}
\frac{\delta!}{\delta_1! \delta_2! \delta_3!} \frac{(\alpha_1+\mu-N_1e_l)!}{(\alpha_1+\mu-N_1e_l-\delta_1)!} \times \\ 
& \ \ \ \times \frac{\big((\alpha_1)_l + \mu_l\big)!}{\big((\alpha_1)_l + \mu_l - N_1 \big)!} \frac1{G^s(y-x)} \int e^{i(x-y)\xi} \xi^{\alpha_1+\mu-N_1e_l-\delta_1} D^{\alpha_2}_x D^{\gamma-\mu}_y D^{N_2e_l+\delta_2}_{\xi} a(x,y,\xi)
\times \\ 
& \ \ \ \times D^{\alpha_3}_x D^{N_3e_l+\delta_3}_{\xi} \Big( \Psi\big( \frac{(x,\xi)}{2^n}\big) - \Psi\big(\frac{(x,\xi)}{2^{n+1}}\big)\Big) d\xi.
\end{split}
\end{equation}
We know by the properties of $G^{s}(D)$ (formulas~\eqref{EqEstimationBAlpha} and~\eqref{EqEstimationDerivativesUltraOperator}) that there exist $D,C_1,C_2>0$ depending on $G$ such that
\begin{eqnarray*}
|b_{\delta}| &\leq& e^{sD} e^{-sD \varphi^{\ast}\big(\frac{|\delta|}{sD}\big)}, \\
\Big| \frac1{G^s(y-x)}\Big| &\leq& C_1^s e^{-sC_2 \omega(y-x)}.
\end{eqnarray*} Here we set $A^2=\frac1{c_0^2}+d$ and $\widetilde{p} \in \mathbb{N}$ so that $\max\{ \sqrt{2}A, 6 \} \leq e^{\widetilde{p}\rho}$. By the definition of amplitude, there exists a constant $C_k>0$ such that
\begin{eqnarray*}
&& |D^{\alpha_2}_x D^{\gamma-\mu}_y D^{N_2e_l + \delta_2}_{\xi} a(x,y,\xi)| \\
&& \ \ \  \leq C_k \Big(\frac{\langle x-y \rangle}{\langle(x,y,\xi)\rangle}\Big)^{\rho(|\alpha_2+\gamma-\mu+\delta_2|+N_2)} e^{4k\rho L^{2\widetilde{p}+3} \varphi^{\ast}\big(\frac{|\alpha_2+\gamma-\mu+\delta_2|+N_2}{4kL^{2\widetilde{p}+3}}\big)} e^{m\omega(x)}
e^{m\omega(y)} e^{m\omega(\xi)}.
\end{eqnarray*}
Now, we observe that 
the support of
$\Psi\big(\frac{x}{2^n},\frac{\xi}{2^n}\big) - \Psi\big(\frac{x}{2^{n+1}},\frac{\xi}{2^{n+1}}\big)$ is  in the set
$B_{n}:=\{ (x,\xi) \in \mathbb{R}^{2d} : 2^n \leq \langle(x,\xi)\rangle \leq 3\cdot2^{n+1} \}.$
Hence, we have, for $k\in\N$ depending on $\lambda, s$ to be chosen later, and for the selection of $\widetilde{p}$ (Lemma~\ref{lemmafistrella} (1)),
\begin{eqnarray*}
\lefteqn{\Big| D^{\alpha_3}_x D^{N_3e_l + \delta_3}_{\xi} \Big( \Psi\big(\frac{(x,\xi)}{2^n}\big) - \Psi\big(\frac{(x,\xi)}{2^{n+1}}\big)\Big) \Big|}\\ &&\le  2D_k e^{4k\rho L^{2\widetilde{p}+4}
\varphi^{\ast}\big(\frac{|\alpha_3+\delta_3|+N_3}{4kL^{2\widetilde{p}+4}}\big)} \Big(\frac1{2^n}\Big)^{|\alpha_3+\delta_3|+N_3}  \\
&&\le 2D_k e^{4k\rho L^{2\widetilde{p}+4}} e^{4k\rho L^{2\widetilde{p}+3} \varphi^{\ast}\big(\frac{|\alpha_3+\delta_3|+N_3}{4kL^{2\widetilde{p}+3}}\big)} \frac1{\langle(x,\xi)\rangle^{\rho N_3}}.
\end{eqnarray*} 
On the other hand, we also have according to~\eqref{EqLemmaTechnical1} (observe that $|\alpha_1+\mu|-N_1-|\delta_1|\ge 0$ by \eqref{multisum}), 
\begin{eqnarray*}
|\xi^{\alpha_1+\mu-N_1e_l-\delta_1}| \leq |\xi|^{|\alpha_1+\mu|-N_1-|\delta_1|} \leq \langle(x,\xi)\rangle^{|\alpha_1+\mu|-N_1-|\delta_1|} 
\leq \frac{e^{\lambda L^4\varphi^{\ast}\big(\frac{|\alpha_1+\mu|}{\lambda L^4}\big)} e^{\lambda L^4\omega(\langle(x,\xi)\rangle)}}{\langle(x,\xi)\rangle^{\rho N_1}}.
\end{eqnarray*} 
Moreover, since $|x_l-y_l|\geq c_0$, we get
\[ \langle x-y \rangle^2 \leq 1+d|x_l-y_l|^2 \leq \frac{|x_l-y_l|^2}{c_0^2}+d|x_l-y_l|^2 = A^2|x_l-y_l|^2, \] with $A$ defined previously. Thus $\langle x-y \rangle \leq A|x_l-y_l|$. Therefore, by Lemma~\ref{LemmaCorcheteIneq}, we
obtain (remember that $\mu\le \gamma$ from \eqref{multisum})
\begin{eqnarray*}
\lefteqn{\Big( \frac{\langle x-y \rangle}{\langle(x,y,\xi)\rangle}\Big)^{\rho(|\alpha_2+\gamma-\mu+\delta_2|+N_2)} \frac1{|x_l-y_l|^{N}} \leq \sqrt{2}^{|\alpha_2+\gamma-\mu+\delta_2|} \Big( \frac{\langle x-y
\rangle}{\langle(x,y,\xi)\rangle} \Big)^{\rho N_2} \frac1{|x_l-y_l|^N} }\\
&& \leq \sqrt{2}^{|\alpha_2+\gamma-\mu+\delta_2|} \frac1{\langle(x,\xi)\rangle^{\rho N_2}} \frac{\langle x-y \rangle^{N}}{|x_l-y_l|^{N}} 
 \leq (\sqrt{2}A)^{|\alpha_2+\gamma-\mu+\delta_2|+N} \frac1{\langle(x,\xi)\rangle^{\rho N_2}}.
\end{eqnarray*} We also have, by Proposition~\ref{PropTechnical} and Lemma~\ref{Lemma158TesisDavid}, 
\begin{eqnarray*}
\frac{(\alpha_1+\mu-N_1e_l)!}{(\alpha_1+\mu-N_1e_l-\delta_1)!} \frac{\big((\alpha_1)_l + \mu_l\big)!}{\big((\alpha_1)_l + \mu_l - N_1 \big)!} &\leq& 2^{|\alpha_1+\mu|-N_1} 2^{(\alpha_1)_l+\mu_l} \delta_1! N_1! \\
&\leq& 4^{|\alpha_1+\mu|} E_k e^{kL^3 \varphi^{\ast}\big(\frac{|\delta_1|}{kL^3}\big)} e^{2kL^{2\widetilde{p}}\varphi^{\ast}_{\sigma}\big(\frac{N_1}{2kL^{2\widetilde{p}}}\big)} \\
&\leq& 4^{|\alpha_1+\mu|} E'_k e^{kL^3 \varphi^{\ast}\big(\frac{|\delta_1|}{kL^3}\big)} e^{2k\rho L^{2\widetilde{p}}\varphi^{\ast}\big(\frac{N_1}{2kL^{2\widetilde{p}}}\big)}.
\end{eqnarray*}
Proceeding as in previous proofs we obtain, for some constant $C'_{\lambda,k,s}>0$ depending on $\lambda$, $k$ and $s$, and $(x,\xi)\in B_{n}$,
\begin{equation}\label{KN}
\begin{split}
\lefteqn{\Big| D^{\alpha}_x D^{\gamma}_y \Big( K_n(x,y) - K_{n+1}(x,y) \Big) \Big| }\\
&\ \ \ \le {C'_{\lambda,k,s} 3^{|\alpha|} 2^{|\gamma|} e^{\lambda L^2 \varphi^{\ast}\big(\frac{|\alpha+\gamma|}{\lambda L^2}\big)} \Big( \sum_{\delta \in\N_{0}^{d}} e^{-sD \varphi^{\ast}\big(\frac{|\delta|}{sD}\big)} e^{kL^3
\varphi^{\ast}\big( \frac{|\delta|}{kL^3}\big)} 3^{|\delta|} \Big) e^{-sC_2\omega(y-x)} \times }\\
&\ \ \  \qquad\times \int e^{\lambda L^4 \omega(\langle(x,\xi)\rangle)} \langle(x,\xi)\rangle^{-\rho N} 3^N e^{2k\rho L^{\widetilde{p}} \varphi^{\ast}\big(\frac{N}{2kL^{\widetilde{p}}}\big)} e^{m\omega(x)} e^{m\omega(y)} e^{m\omega(\xi)} d\xi.
\end{split}
\end{equation}
%
Since the inequality \eqref{KN} holds for every $N \in \mathbb{N}$, we can take the infimum in $N$ to obtain, by formula~\eqref{EqLemmaTechnical2}, for some constant $C>0,$
\begin{eqnarray*}
\inf_{N \in \mathbb{N}_0} \langle(x,\xi)\rangle^{-\rho N} e^{2k\rho \varphi^{\ast}\big(\frac{N}{2k}\big)} &=& \Big( \inf_{N \in \mathbb{N}_0} \langle(x,\xi)\rangle^{-N} e^{2k\varphi^{\ast}\big(\frac{N}{2k}\big)} \Big)^{\rho} \\ &\leq& e^{-2k\rho \omega(\langle(x,\xi)\rangle) + \rho\log(\langle(x,\xi)\rangle)} \\
&\leq& C e^{-\rho(2k-1)\omega(\langle(x,\xi)\rangle)} \\
&\leq& C e^{-2\rho(k-1)\omega(\langle(x,\xi)\rangle)} e^{-\rho \omega(2^n)}.
\end{eqnarray*}
%
%
If we take $s>0$ big enough and $k \geq sD$, the series in \eqref{KN} is convergent (proceeding  as in \eqref{series1} and \eqref{series2}) and, hence
%
 we can deduce that for each $\lambda>0$ there is some constant $C_{\lambda}>0$ such that
\begin{eqnarray*}
\big| D^{\alpha}_x D^{\gamma}_y \big( K_n(x,y) - K_{n+1}(x,y) \big)\big| &\leq& C_{\lambda} e^{\lambda \varphi^{\ast}\big(\frac{|\alpha+\gamma|}{\lambda}\big)} e^{-\lambda \omega(x)} e^{-\lambda \omega(y)} e^{-\rho\omega(2^n)},
\end{eqnarray*} for every $(x,y)\notin \Delta_{r}$.

Let $\chi$ be as in Lemma~\ref{LemmaExistenceFunctionEOmega}. It is clear that $\{\chi K_n\}$ is a Cauchy sequence in $\mathcal{S}_{\omega}\big(\mathbb{R}^{2d}\big)$. Since $\mathcal{S}_{\omega}\big(\mathbb{R}^{2d}\big)$ is complete, there exists $T \in
\mathcal{S}_{\omega}\big(\mathbb{R}^{2d}\big)$ such that $\chi K_n \to T$ in $\mathcal{S}_{\omega}\big(\mathbb{R}^{2d}\big)$. On the other hand, we have seen that $K_n \to K$ in $\mathcal{S}'_{\omega}\big(\mathbb{R}^{2d}\big)$. Hence, $\chi
K_n \to \chi K$ in $\mathcal{S}'_{\omega}\big(\mathbb{R}^{2d}\big)$ when $n\to\infty$. This shows that $\chi K = T$ in $\mathcal{S}'_{\omega}\big(\mathbb{R}^{2d}\big)$. Since $K=T$ in $\mathbb{R}^{2d} \setminus \Delta_{r}$, we
have
\[ |D^{\alpha}_x D^{\beta}_y K(x,y)| = |D^{\alpha}_x D^{\beta}_y T(x,y)| \leq C_{\lambda} e^{\lambda \varphi^{\ast}\big(\frac{|\alpha+\beta|}{\lambda}\big)} e^{-\lambda \omega(x)} e^{-\lambda \omega(y)}, \] for $(x,y) \in
\mathbb{R}^{2d} \setminus \Delta_{r}$, which completes the proof.
\end{proof}
We observe that the constant $C_{\lambda}$ at the end of the proof of the last result becomes larger when $r>0$ becomes smaller.

\subsection{Composition of pseudodifferential operators and the transpose operator}

Now, for simplicity, in what follows we denote $\mathcal{S}_{\omega}$ for $\mathcal{S}_{\omega}(\R^{d}).$ The following lemma is taken from~\cite[Lemma 3.11]{FGJ2005pseudo}.

\begin{lema}\label{LemmaPrevioLarguisimo}
Let $m,n,j\in\N$ and $t>0$ such that $m \geq n$ and $\frac1{e} e^{\frac{m}{j} \varphi^{\ast}(\frac{j}{m})} \leq t \leq e^{\frac{n}{j} \varphi^{\ast}(\frac{j}{n})}$. We have
\[ t^{j+1} \geq e^{n\omega(t)} e^{2m \varphi^{\ast}(\frac{j}{2m})} e^{-j}. \] In particular,
\[ e^{n \varphi^{\ast}(\frac{j}{n})} \geq e^{(n-1)\omega(t)} e^{2n \varphi^{\ast}(\frac{j}{2n})}, \] for $j$ large enough.
\end{lema}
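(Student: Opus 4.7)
Setting $s := \log t$ and using $\omega(t) = \varphi(s)$, the inequality $t^{j+1} \ge e^{n\omega(t)} e^{2m\varphi^{\ast}(j/(2m))} e^{-j}$ is equivalent to
\[
(j+1)s + j \ge n\varphi(s) + 2m\varphi^{\ast}(j/(2m)).
\]
The plan is to bound the two right-hand terms separately using the two hypotheses on $s$, and then combine.

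First, from the lower bound $s \ge (m/j)\varphi^{\ast}(j/m) - 1$, multiplying by $j$ yields $js + j \ge m\varphi^{\ast}(j/m)$. Convexity of $\varphi^{\ast}$ with $\varphi^{\ast}(0) = 0$ (equivalently, monotonicity of $\varphi^{\ast}(t)/t$) gives $\varphi^{\ast}(j/m) \ge 2\varphi^{\ast}(j/(2m))$, hence
\[
js + j \ge 2m\varphi^{\ast}(j/(2m)).
\]

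Second, for $n\varphi(s)$, I would invoke Fenchel duality $\varphi(s) = \sup_{y \ge 0}(sy - \varphi^{\ast}(y))$, attained at some $y^{\ast} \in \partial\varphi(s)$. Monotonicity of $\varphi^{\ast}(t)/t$ together with the upper hypothesis gives $(\varphi^{\ast})'(j/n) \ge (n/j)\varphi^{\ast}(j/n) \ge s$; since $(\varphi^{\ast})'$ is itself nondecreasing (convexity of $\varphi^{\ast}$), this forces $y^{\ast} \le j/n$. The Fenchel identity at $y^{\ast}$ then gives $\varphi(s) = sy^{\ast} - \varphi^{\ast}(y^{\ast}) \le sj/n$ (using $\varphi^{\ast}(y^{\ast}) \ge 0$), so $n\varphi(s) \le js$.

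Combining, $(j+1)s + j - n\varphi(s) \ge (j+1)s + j - js = s + j$, so it suffices to establish $s + j \ge 2m\varphi^{\ast}(j/(2m))$. This last reconciliation is the main obstacle, since the first step only yields the a priori weaker $js + j \ge 2m\varphi^{\ast}(j/(2m))$; the discrepancy must be absorbed by the $e^{-j}$ slack on the right-hand side of the target. The cleanest route is to apply Lemma~\ref{LemmaTechnical2} with $k = m$ directly to the shifted argument $\log(et) = s + 1$, which by hypothesis satisfies the lower bound $(m/j)\varphi^{\ast}(j/m)$ required by that lemma: the resulting estimate $(et)^{N+1} \ge e^{m\omega(et)} e^{2m\varphi^{\ast}(N/(2m))}$ for an integer $N \ge j$, combined with $\omega(et) \ge \omega(t)$, $m \ge n$, and $\varphi^{\ast}(N/(2m)) \ge \varphi^{\ast}(j/(2m))$, delivers the claim after carefully tracking the factor $e^{-(N+1)}$ produced when unwinding from $et$ to $t$ and using the upper hypothesis on $\log t$ to control the ratio $t^{N-j}$. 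The ``in particular'' consequence then follows by specializing $m = n$, evaluating the main inequality at $t$ near the upper endpoint $e^{(n/j)\varphi^{\ast}(j/n)}$, and noting that the $j$-dependent slack terms $(n/j)\varphi^{\ast}(j/n)$ and $-j$ become asymptotically negligible against $n\varphi^{\ast}(j/n)$ as $j \to \infty$.
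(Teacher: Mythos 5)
The paper itself does not prove this lemma (it is quoted from \cite[Lemma 3.11]{FGJ2005pseudo}), so your argument has to stand on its own. Your reduction to $(j+1)s+j\ge n\varphi(s)+2m\varphi^{\ast}(j/(2m))$ with $s=\log t$ is right, and both individual estimates are correct: $js+j\ge m\varphi^{\ast}(j/m)\ge 2m\varphi^{\ast}(j/(2m))$ from the lower endpoint, and $n\varphi(s)\le js$ from the upper endpoint (your Fenchel argument can be made clean by noting $sy-\varphi^{\ast}(y)\le sj/n$ for $y\le j/n$ and $\le 0$ for $y\ge j/n$). But, as you yourself observe, each of these two bounds consumes essentially the whole budget $js$, and their sum is $2js+j$, which exceeds $(j+1)s+j$ by $(j-1)s$ --- an amount far too large to be absorbed by the $e^{-j}$ slack. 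Your proposed repair does not close this gap: Lemma~\ref{LemmaTechnical2} with $k=m$ applies only when $\log(et)$ lies in the window $\bigl[\tfrac{m}{N}\varphi^{\ast}(\tfrac{N}{m}),\tfrac{m}{N+1}\varphi^{\ast}(\tfrac{N+1}{m})\bigr]$, and the integer $N$ for which this holds is controlled only from below by $j$; it can be much larger (for $\varphi^{\ast}(y)=y^{2}/2$ and $m\gg n$ one gets $N\approx mj/n$). Unwinding $(et)^{N+1}\ge e^{m\omega(et)}e^{2m\varphi^{\ast}(N/(2m))}$ back to $t^{j+1}$ then costs a factor $e^{-(N+1)}t^{-(N-j)}$, which is far smaller than the permitted $e^{-j}$, so the claim is not delivered. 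A telling symptom is that your argument never uses the hypothesis $m\ge n$ in any load-bearing way.

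The fix is to estimate the \emph{sum} $n\varphi(s)+2m\varphi^{\ast}(j/(2m))$ jointly rather than term by term. Write $2m\varphi^{\ast}(j/(2m))=\sup_{\sigma\ge 0}\bigl(j\sigma-2m\varphi(\sigma)\bigr)$ and split according to the position of $\sigma$ relative to $s$. For $\sigma\ge s$, monotonicity of $\varphi$ and $2m-n\ge m$ give $n\varphi(s)+j\sigma-2m\varphi(\sigma)\le j\sigma-m\varphi(\sigma)\le m\varphi^{\ast}(j/m)\le j(s+1)$, the last step being exactly the lower endpoint hypothesis $\tfrac{m}{j}\varphi^{\ast}(j/m)\le\log(et)$. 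For $0\le\sigma\le s$, represent $n\varphi(s)=\sup_{0\le y\le j/n}(nsy-n\varphi^{\ast}(y))$ (legitimate by the upper endpoint hypothesis), apply Young's inequality $n\varphi^{\ast}(y)+n\varphi(\sigma)\ge ny\sigma$, and use $ny\le j$, $\sigma\le s$ to get $n\varphi(s)+j\sigma-2m\varphi(\sigma)\le ny(s-\sigma)+j\sigma-(2m-n)\varphi(\sigma)\le js$. Taking the supremum over $\sigma$ yields $n\varphi(s)+2m\varphi^{\ast}(j/(2m))\le js+j\le (j+1)s+j$, which is the desired inequality. Note how $m\ge n$ is essential in both cases: it lets $2m\varphi(\sigma)$ simultaneously absorb $n\varphi(s)$ (or $n\varphi(\sigma)$) and still leave $m\varphi(\sigma)$ to control $j\sigma$.
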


\begin{theo}\label{TheoLarguisimo}
Let $a(x,y,\xi)$ be an amplitude in $\GA^{m,\omega}_{\rho}$ with associated pseudodifferential operator  $A$. Then there exist a pseudodifferential operator $P(x,D)$ given by a symbol $p(x,\xi)$ in $\GS^{2m,\omega}_{\rho}$ and an $\omega$-regularizing operator $\widetilde R$ such that $Au = P(x,D)u + \widetilde{R}u$, for each $u \in \mathcal{S}_{\omega}$ and, moreover,
\[ p(x,\xi) \sim \sum_{j=0}^{\infty} p_j(x,\xi) = \sum_{j=0}^{\infty} \sum_{|\alpha|=j} \frac1{\alpha!} D^{\alpha}_{\xi} \partial^{\alpha}_y a(x,y,\xi)\left. \right|_{y=x}.\]
\end{theo}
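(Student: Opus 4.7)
The plan is to construct $p(x,\xi)$ by the formal-sum machinery already developed and then to show that the difference $\widetilde R := A - P(x,D)$ is $\omega$-regularizing by checking, via Proposition~\ref{PropExampleExtensionPseudodifferentialOperator}, that its distributional kernel lies in $\Sch_\omega(\R^{2d})$. Example~\ref{ExamFormalSum1} provides that $\sum_j p_j \in \FGS^{2m,\omega}_\rho$, and Theorem~\ref{TheoEquivalentSymbol} then furnishes a symbol $p \in \GS^{2m,\omega}_\rho$ with $p \sim \sum_j p_j$; we let $P(x,D)$ be its associated pseudodifferential operator.

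To match $A$ and $P(x,D)$ piece by piece, I would decompose both using the partition $(\varphi_j)_{j\ge 0}$ of~\eqref{not18} with a suitable sequence $(j_n)$ as in Proposition~\ref{LemmaPseudoDifferentialOperatorLimit}. By Lemma~\ref{LemmaSumOfAmplitudes}, $Au = \sum_j A_j u$ in $\Sch_\omega$, where $A_j$ has amplitude $(\varphi_j - \varphi_{j+1})(x,\xi) a(x,y,\xi)$; and by Proposition~\ref{LemmaPseudoDifferentialOperatorLimit}, $P(x,D) = \lim_N S_N$ in $L(\Sch_\omega,\Sch'_\omega)$, with $S_j - S_{j-1}$ the pseudodifferential operator of symbol $\sigma_j(x,\xi) := (\varphi_j - \varphi_{j+1})(x,\xi)\sum_{l=0}^j p_l(x,\xi)$. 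Hence it suffices to show that $\sum_j \bigl(A_j - (S_j - S_{j-1})\bigr)$ has kernel in $\Sch_\omega(\R^{2d})$.

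The key algebraic step is, for each $j$ with $j_n \le j < j_{n+1}$, a Taylor expansion in $y$ about $y=x$ up to order $j$,
\[ a(x,y,\xi) = \sum_{|\alpha| \le j} \frac{(y-x)^\alpha}{\alpha!}\partial^\alpha_y a(x,y,\xi)\big|_{y=x} + r_j(x,y,\xi), \]
followed by the identity $(y-x)^\alpha e^{i(x-y)\xi} = (-1)^{|\alpha|} D^\alpha_\xi e^{i(x-y)\xi}$ and integration by parts in $\xi$. The terms in which $D^\alpha_\xi$ falls entirely on $\partial^\alpha_y a(x,y,\xi)|_{y=x}$ reassemble, after summation in $|\alpha|\le j$ with the factor $1/\alpha!$, precisely into $\sigma_j$; a Leibniz expansion of the remaining contributions yields
\[ A_j - (S_j - S_{j-1}) = T_j + U_j, \]
where $T_j$ collects all terms in which at least one $D_\xi$-derivative lands on the cutoff $\varphi_j - \varphi_{j+1}$ and $U_j$ is the amplitude operator associated with $(\varphi_j - \varphi_{j+1}) r_j$.

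The main obstacle is to show that $\sum_j (T_j + U_j)$ has kernel in $\Sch_\omega(\R^{2d})$. For $T_j$ the support of any derivative of $\varphi_j - \varphi_{j+1}$ forces $\langle(x,\xi)\rangle \sim A_{n,j} = R e^{\frac{n}{j}\varphi^\ast(j/n)}$ by~\eqref{EqInequalityPsi2}, each derivative contributing a factor $A_{n,j}^{-\rho}$; since $n$ is free and $A_{n,j}$ grows superexponentially in $j$, this yields arbitrarily strong $\omega$-decay in $(x,\xi)$, transferred to $(x,y)$ by further integration by parts in $y$ using powers of the ultradifferential operator $G(D_y)$, exactly as in Proposition~\ref{LemmaIteratedIntegralCauchy}. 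For $U_j$ the remainder $r_j$ carries a factor $(y-x)^\alpha$ with $|\alpha|=j+1$; converting these into $D^\alpha_\xi$ via integration by parts in $\xi$ and invoking the amplitude estimate of Definition~\ref{amplitude} on the $(j+1)$-st $y$-derivative of $a$ produces the factor $\langle(x,y,\xi)\rangle^{-\rho(j+1)} e^{n\rho\varphi^\ast((j+1)/n)}$, which Lemma~\ref{LemmaPrevioLarguisimo} converts—on the support of $\varphi_j - \varphi_{j+1}$, where $\log\langle(x,\xi)\rangle \sim \tfrac{n}{j}\varphi^\ast(j/n)$—into the required exponential decay in $\omega(\langle(x,\xi)\rangle)$. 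Off-diagonal behavior in $(x,y)$ is simultaneously controlled via Theorem~\ref{TheoFormalKernel}, and the summation in $j$ is absolutely convergent thanks to the rapid growth of $A_{n,j}$. Tracking both the Taylor order $j$ and the usual multi-indices $(\alpha,\beta)$ in parallel, following the integration-by-parts strategy of Proposition~\ref{LemmaIteratedIntegralCauchy} and Theorem~\ref{TheoFormalKernel}, yields for every $\lambda>0$ a bound
\[ |D^\alpha_x D^\beta_y K_{\widetilde R}(x,y)| \le C_\lambda e^{\lambda \varphi^\ast(|\alpha+\beta|/\lambda)} e^{-\lambda\omega(x)} e^{-\lambda\omega(y)}, \]
which by Lemma~\ref{LemmaEquivalence} places the kernel in $\Sch_\omega(\R^{2d})$ and concludes via Proposition~\ref{PropExampleExtensionPseudodifferentialOperator}.
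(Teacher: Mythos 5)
Your overall architecture is the paper's: build $p$ from the formal sum of Example~\ref{ExamFormalSum1} via Theorem~\ref{TheoEquivalentSymbol}, decompose $A=\sum_j A_j$ and $P=\lim_N S_N$ with the partition $(\varphi_j)$, Taylor expand in $y$ about $x$, and integrate by parts in $\xi$; your $U_j$ are exactly the paper's remainder terms $Q_j$ (whose treatment, by the way, needs the preliminary splitting $a=\chi a+(1-\chi)a$ with $\chi$ from Lemma~\ref{LemmaExistenceFunctionEOmega} and Theorem~\ref{TheoFormalKernel} so that one may assume $|x-y|<k$ on the support — you only allude to this). The genuine gap is in your treatment of $T_j$. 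For $j_n\le j<j_{n+1}$, $T_j$ contains, for \emph{every} $1\le|\alpha|\le j$ and $0\neq\beta\le\alpha$, the term $D^{\beta}_{\xi}(\varphi_j-\varphi_{j+1})\,D^{\alpha-\beta}_{\xi}\partial^{\alpha}_y a(x,x,\xi)$. When $|\alpha|$ is small and $j$ is large, all the decay available is $A_{n,j}^{-\rho|\beta|}\langle(x,\xi)\rangle^{-\rho(2|\alpha|-|\beta|)}\lesssim \langle(x,\xi)\rangle^{-2\rho|\alpha|}$ on the annulus $\langle(x,\xi)\rangle\sim A_{n,j}$, against the growth $e^{2m\omega(x)}e^{m\omega(\xi)}$ of the amplitude. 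Since $\log t=o(\omega(t))$, a \emph{fixed} power of $\langle(x,\xi)\rangle^{-1}$ can never produce $e^{-\lambda\omega(x)}e^{-\lambda\omega(\xi)}$ for all $\lambda>0$: ``each derivative contributing a factor $A_{n,j}^{-\rho}$'' yields only polynomial decay of order $2\rho|\alpha|$, not $\omega$-decay, and enlarging the index in the amplitude estimate improves only the factor $e^{n\rho\varphi^{\ast}(\cdot/n)}$, never the decay order. Hence the individual $T_j$ do not admit $\mathcal{S}_{\omega}$-bounds that are summable in $j$, and the claimed absolute convergence of $\sum_j T_j$ in $\mathcal{S}_{\omega}(\R^{2d})$ fails.

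The paper circumvents exactly this by telescoping \emph{before} estimating: regrouping $\sum_{j\le N}T_j$ in $\alpha$ gives $\sum_{k\le N}I_k-W_N$, where $I_k$ carries only $|\alpha|=k$ against $D^{\beta}_{\xi}\varphi_k$ — there the polynomial order $\rho k$ matches the support scale $\langle(x,\xi)\rangle\sim Re^{\frac{n}{k}\varphi^{\ast}(k/n)}$, so Lemma~\ref{LemmaPrevioLarguisimo} converts $\langle(x,\xi)\rangle^{-\rho k}$ into genuine $e^{-n\rho\omega}$ decay — while the boundary term $W_N$ (built from $\varphi_{N+1}$ and all $1\le|\alpha|\le N$) is precisely the piece that cannot be controlled in $\mathcal{S}_{\omega}$ and is instead shown to define operators $T_N\to 0$ only in $L(\mathcal{S}_{\omega},\mathcal{S}'_{\omega})$ (the paper's third step). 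Your proposal omits both the regrouping and this weak-convergence argument, and without them the proof of $\widetilde R$ being $\omega$-regularizing does not close.
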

\begin{proof}
First of all we consider $\chi(x,y)$ from Lemma~\ref{LemmaExistenceFunctionEOmega}. We then decompose $a(x,y,\xi)$ as
\[ a(x,y,\xi) = \chi(x,y) a(x,y,\xi) + \big(1-\chi(x,y)\big)a(x,y,\xi). \] On the one hand, it follows from Theorem~\ref{TheoFormalKernel} and Proposition~\ref{PropExampleExtensionPseudodifferentialOperator} that $\chi(x,y)a(x,y,\xi) \in \GA^{m,\omega}_{\rho}$ defines an $\omega$-regularizing operator. Then we can suppose that the support of the amplitude is in $\Delta_k \times \mathbb{R}^d$ for some $k>0$. 

We have $\sum_j p_j \in \FGS^{2m,\omega}_{\rho}$, by Example~\ref{ExamFormalSum1}. Let $(j_n)_{n}$ be as in the proof of Theorem~\ref{TheoEquivalentSymbol} with $\frac{n}{j}\varphi^{\ast}\big(\frac{j}{n}\big) \geq \max\big\{n, \log(C_{2n}), \log(D_n) \big\}$, where $C_n$ and $D_n$ are the constants, depending on $n\in\N$, which appear in Definition~\ref{amplitude} (of amplitude) and the definition of formal sum for $\sum_{j=0}^{\infty} p_j(x,\xi)$.  We take
\[ p(x,\xi) = \sum_{j=0}^{\infty} \varphi_j(x,\xi) p_j(x,\xi), \] where $\varphi_{j}$ is defined in \eqref{not18}. We denote $P:=P(x,D)$. By Theorem~\ref{TheoEquivalentSymbol},  $p \sim \sum p_j$. By Lemma~\ref{LemmaSumOfAmplitudes}, we have $A=\sum_{N=0}^{\infty}A_N$, where $A_N$ is the pseudodifferential operator with amplitude $a(x,y,\xi)\big( \varphi_N - \varphi_{N+1} \big)(x,\xi)$. Moreover, by Proposition~\ref{LemmaPseudoDifferentialOperatorLimit}, $P= \lim_{N \to \infty} S_N$ in $L(\mathcal{S}_{\omega},\mathcal{S}'_{\omega})$, where $S_N$ is the pseudodifferential operator with symbol $\sum_{j=0}^N \big(\varphi_j - \varphi_{j+1}\big)(x,\xi)\big( \sum_{l=0}^j p_l(x,\xi) \big)$.

That is, for $u \in \mathcal{S}_{\omega}$, we have
\[ Au(x) = \sum_{N=0}^{\infty} A_Nu(x) = \sum_{N=0}^{\infty} \iint e^{i(x-y)\xi} \Big( (\varphi_N - \varphi_{N+1})(x,\xi) a(x,y,\xi) \Big) u(y) dy d\xi, \] and
\[ Pu(x) = \lim_{N \to \infty} \iint e^{i(x-y)\xi} \Big(\sum_{j=0}^N \big( \varphi_j - \varphi_{j+1} \big)(x,\xi) \big(\sum_{l=0}^j p_l(x,\xi) \big)\Big) u(y) dy d\xi. \] Thus, we can write $A-P$ as the series $\sum_{N=0}^{\infty} P_N$, where $P_N$ is the pseudodifferential operator associated to
\[ \widetilde{a}_N(x,y,\xi) = \big( \varphi_{N} - \varphi_{N+1}\big)(x,\xi) \Big( a(x,y,\xi) - \sum_{j=0}^N p_j(x,\xi) \Big), \] which is an amplitude. Our purpose is to show that the formal kernel \[K(x,y):=\sum_{N=0}^{\infty} \int e^{i(x-y)\xi} \widetilde{a}_N(x,y,\xi) d\xi \] belongs to $\mathcal{S}_{\omega}(\R^{2d})$. We denote $K_j(x,y)=\int e^{i(x-y)\xi} \widetilde{a}_j(x,y,\xi) d\xi$.

As in~\cite[Theorem 3.13]{FGJ2005pseudo}, we can write the kernel $K$ as the limit when $N \to \infty$ of
\[ \sum_{j=1}^N K_j = \sum_{j=1}^N I_j + \sum_{j=1}^N Q_j - W_N, \] where
\begin{eqnarray*}
&& I_j(x,y) = \sum_{|\alpha|=j} \sum_{0 \neq \beta \leq \alpha} \frac1{\beta! (\alpha-\beta)!} \int e^{i(x-y)\xi} D^{\beta}_{\xi} \varphi_j(x,\xi) D^{\alpha-\beta}_{\xi} \partial^{\alpha}_y a(x,x,\xi) d\xi \\
&& Q_j(x,y) = \sum_{|\alpha|=j+1} \sum_{\beta \leq \alpha} \frac1{\beta! (\alpha-\beta)!} \int e^{i(x-y)\xi} D^{\beta}_{\xi}\big(\varphi_j(x,\xi) - \varphi_{j+1}(x,\xi)\big) D^{\alpha-\beta}_{\xi} \omega_{\alpha}(x,y,\xi) d\xi \\
&& \omega_{\alpha}(x,y,\xi) = (j+1) \int_0^1 \partial^{\alpha}_y a(x,x+t(y-x),\xi)(1-t)^j dt \\
&& W_N(x,y) = \sum_{|\alpha|=1}^N \sum_{0 \neq \beta \leq \alpha} \frac1{\beta! (\alpha-\beta)!} \int e^{i(x-y)\xi} D^{\beta}_{\xi} \varphi_{N+1}(x,\xi) D^{\alpha-\beta}_{\xi} \partial^{\alpha}_y a(x,x,\xi) d\xi.
\end{eqnarray*}

We will not give a detailed proof of all the steps below, unless it was necessary.  
\vskip.5\baselineskip
\emph{\underline{First step}}. We see that $\sum_{j=1}^{\infty} I_j$ belongs to $\mathcal{S}_{\omega}(\R^{2d})$. To this, we consider $\gamma,\epsilon \in \mathbb{N}_0^d$. We begin by differentiating  $I_j$:
\begin{eqnarray*}
\lefteqn{D^{\gamma}_x D^{\epsilon}_y I_j(x,y) = \sum_{|\alpha|=j} \sum_{0 \neq \beta \leq \alpha} \frac1{\beta! (\alpha-\beta)!} \sum_{\gamma_1+\gamma_2+\gamma_3=\gamma} \frac{\gamma!}{\gamma_1! \gamma_2! \gamma_3!} \times }\\
&& \times \int (-1)^{\epsilon} \xi^{\epsilon + \gamma_1} e^{i(x-y)\xi} D^{\gamma_2}_x D^{\beta}_{\xi} \varphi_j(x,\xi) D^{\gamma_3}_x \partial^{\alpha}_y D^{\alpha-\beta}_{\xi} a(x,x,\xi) d\xi.
\end{eqnarray*}
Here we use integration by parts with the formula
\begin{equation}\label{EqIntegrationByPartsDxi}
e^{i(x-y)\xi} = \frac1{G(y-x)}G\big(-D_{\xi}\big) e^{i(x-y)\xi},
\end{equation}
for a suitable power $G^{s}(D)$ of $G(D)$, being $G(\xi)$ the function that appears in Theorem~\ref{TheoLangenbruchSeveralVariables}, to obtain
\begin{eqnarray*}
\lefteqn{\int \xi^{\epsilon + \gamma_1} e^{i(x-y)\xi} D^{\gamma_2}_x D^{\beta}_{\xi} \varphi_j(x,\xi) D^{\gamma_3}_x \partial^{\alpha}_y D^{\alpha-\beta}_{\xi} a(x,x,\xi) d\xi}\\
&&=\int e^{i(x-y)\xi} \frac1{G^s(y-x)} G^{s}(D_{\xi}) \big\{ \xi^{\epsilon+\gamma_1} D^{\gamma_2}_x D^{\beta}_{\xi} \varphi_j(x,\xi) D^{\gamma_3}_x \partial^{\alpha}_y D^{\alpha-\beta}_{\xi} a(x,x,\xi) \big\} d\xi \\
&& = \int e^{i(x-y)\xi} \frac1{G^s(y-x)} \sum_{\tau \in\N_{0}^{d}} b_{\tau} \sum_{\substack{\tau_1+\tau_2+\tau_3=\tau\\ \tau_{1}\le \epsilon+\gamma_{1}}} \frac{\tau!}{\tau_1! \tau_2! \tau_3!} \frac{(\epsilon+\gamma_1)!}{(\epsilon+\gamma_1-\tau_1)!} \xi^{\epsilon+\gamma_1-\tau_1} \times \\
&&\qquad \times D^{\gamma_2}_x D^{\beta+\tau_2}_{\xi} \varphi_j(x,\xi) D^{\gamma_3}_x \partial^{\alpha}_y D^{\alpha-\beta+\tau_3}_{\xi} a(x,x,\xi)d\xi.
\end{eqnarray*}
Therefore
\begin{eqnarray*}
\lefteqn{D^{\gamma}_x D^{\epsilon}_y I_j(x,y)}\\
&& =\sum_{|\alpha|=j} \sum_{0 \neq \beta \leq \alpha} \frac1{\beta! (\alpha-\beta)!} (-1)^{\epsilon} \sum_{\tau 
\in\N_{0}^{d}} b_{\tau} \sum_{\substack{\gamma_1+\gamma_2+\gamma_3=\gamma \\ \tau_1 + \tau_2 + \tau_3 = \tau\\ \tau_{1}\le \epsilon+\gamma_{1}}} \frac{\gamma!}{\gamma_1! \gamma_2! \gamma_3!} \frac{\tau!}{\tau_1! \tau_2! \tau_3!} \frac{(\epsilon+\gamma_1)!}{(\epsilon+\gamma_1-\tau_1)!} \times \\
&& \qquad\times \frac1{G^s(y-x)} \int e^{i(x-y)\xi} \xi^{\epsilon+\gamma_1-\tau_1} D^{\gamma_2}_x D^{\beta+\tau_2}_{\xi} \varphi_j(x,\xi) D^{\gamma_3}_x \partial^{\alpha}_y D^{\alpha-\beta+\tau_3}_{\xi} a(x,x,\xi) d\xi.
\end{eqnarray*}
Now, proceeding as in \cite[Theorem 3.13]{FGJ2005pseudo} (using Lemma~\ref{LemmaPrevioLarguisimo}), it follows that $\sum_{j=1}^{\infty} I_j \in \mathcal{S}_{\omega}(\R^{2d})$.

\vskip.5\baselineskip

\emph{\underline{Second step.}} Now, let us prove that $\sum_{j=1}^{\infty} Q_j$ belongs to $\mathcal{S}_{\omega}(\R^{2d})$. We proceed as before, and we first calculate, for $\gamma,\epsilon \in \mathbb{N}_0^d$, the derivatives of $Q_j$:
\begin{eqnarray*}
&& D^{\gamma}_x D^{\epsilon}_y Q_j(x,y) = \sum_{|\alpha|=j+1} \sum_{\beta \leq \alpha} \frac1{\beta! (\alpha-\beta)!} \sum_{\substack{\gamma_1 + \gamma_2 + \gamma_3 = \gamma \\ \epsilon_1 + \epsilon_2 = \epsilon}} \frac{\gamma!}{\gamma_1! \gamma_2! \gamma_3!} \frac{\epsilon!}{\epsilon_1! \epsilon_2!} (-1)^{\epsilon_1} \times \\
&& \ \ \  \times \int \xi^{\gamma_1 + \epsilon_1} e^{i(x-y)\xi} D^{\gamma_3}_x D^{\beta}_{\xi} \big( \varphi_j(x,\xi) - \varphi_{j+1}(x,\xi) \big) D^{\gamma_2}_x D^{\epsilon_2}_y D^{\alpha-\beta}_{\xi} \omega_{\alpha}(x,y,\xi) d\xi.
\end{eqnarray*} We use again the integration by parts given by   formula~\eqref{EqIntegrationByPartsDxi} with $G^{s}(D)$ for a suitable power of $G(D)$ in the integral above to obtain, in the integrand,
\begin{eqnarray*}
&& e^{i(x-y)\xi} \frac1{G^s(y-x)} G^s(D_{\xi}) \Big( \xi^{\gamma_1+\epsilon_1} D^{\gamma_3}_x D^{\beta}_{\xi}\big( \varphi_j(x,\xi) - \varphi_{j+1}(x,\xi) \big) D^{\gamma_2}_x D^{\epsilon_2}_y D^{\alpha-\beta}_{\xi} \omega_{\alpha}(x,y,\xi) \Big)  \\
&& \ \ \  = e^{i(x-y)\xi} \frac1{G^s(y-x)} \sum_{\tau\in\N_{0}^{d}} b_{\tau} \sum_{\substack{\tau_1+\tau_2+\tau_3=\tau\\\tau_{1}\le \gamma_{1}+\epsilon_{1}}} \frac{\tau!}{\tau_1! \tau_2! \tau_3!} \frac{(\gamma_1+\epsilon_1)!}{(\gamma_1+\epsilon_1-\tau_1)!} \xi^{\gamma_1+\epsilon_1-\tau_1} \times \\
&& \ \ \ \ \ \times D^{\gamma_3}_x D^{\beta+\tau_3}_{\xi}\big(\varphi_j(x,\xi) - \varphi_{j+1}(x,\xi)\big) D^{\gamma_2}_x D^{\epsilon_2}_y D^{\alpha-\beta+\tau_2}_{\xi} \omega_{\alpha}(x,y,\xi).
\end{eqnarray*} 
So, we have
\begin{equation*}
\begin{split}
D^{\gamma}_x D^{\epsilon}_y Q_j(x,y)&=\sum_{|\alpha|=j+1} \sum_{\beta \leq \alpha} \frac1{\beta! (\alpha-\beta)!} \sum_{\tau \in\N_{0}^{d}} b_{\tau} \sum_{\substack{\gamma_1+\gamma_2+\gamma_3 = \gamma \\ \epsilon_1+\epsilon_2=\epsilon \\ \tau_1+\tau_2+\tau_3=\tau,\,\tau_{1}\le \gamma_{1}+\epsilon_{1} }} (-1)^{\epsilon_1} \frac{\gamma!}{\gamma_1! \gamma_2! \gamma_3!} \frac{\epsilon!}{\epsilon_1! \epsilon_2!} \frac{\tau!}{\tau_1! \tau_2! \tau_3!}\times\\
&\ \ \ \ \ \ \ \ \times \frac{(\gamma_1+\epsilon_1)!}{(\gamma_1+\epsilon_1-\tau_1)!} \frac1{G^s(y-x)} \int e^{i(x-y)\xi} \xi^{\gamma_1+\epsilon_1-\tau_1}\times  \\
&\ \ \ \ \ \ \ \ \times D^{\gamma_3}_x D^{\beta+\tau_3}_{\xi} \big( \varphi_j(x,\xi) - \varphi_{j+1}(x,\xi) \big) D^{\gamma_2}_x D^{\epsilon_2}_y D^{\alpha-\beta+\tau_2}_{\xi} \omega_{\alpha}(x,y,\xi) d\xi.
\end{split}
\end{equation*}
Now, fix $\lambda>0$ and take $n\geq \frac{\lambda}{\rho}$. 
To estimate the derivatives of $\omega_{\alpha}(x,y,\xi)$, we denote by $p$ a positive number such that $1+k<e^p$, where $k$ is the constant for the subscript of  $\Delta_k$. For this $n$, we define $\overline{n}:=16(2nL^{3+q}+2\frac{m}{\rho}L^q+1)L^p$ where $L>0$ is the constant in Lemma~\ref{lemmafistrella}\,(1), $R>0$ is the constant in Definition~\ref{DefFormalSums} and $q\in\N$ is such that $2^{q}\ge 3R$. Then, there is $C_n>0$ such that 
\begin{eqnarray*}
\lefteqn{|D^{\gamma_2}_x D^{\epsilon_2}_y D^{\alpha-\beta+\tau_2}_{\xi} \omega_{\alpha}(x,y,\xi)|} \\
&& \leq (j+1)\int_0^1 |1-t|^j |t|^{|\epsilon_2|} \big| D^{\gamma_2}_x D^{\alpha+\epsilon_2}_y D^{\alpha-\beta+\tau_2}_{\xi} a(x,x+t(y-x),\xi) \big| dt \\
&& \leq C_n e^{\overline{n} \rho \varphi^{\ast}\big(\frac{|2\alpha-\beta+\gamma_2+\epsilon_2+\tau_2|}{\overline{n}}\big)} e^{m\omega(x)} e^{m\omega(\xi)} (j+1)  \times \\
&&\ \ \ \ \times \int_0^1 |1-t|^j |t|^{|\epsilon_2|} \Big( \frac{\langle t(x-y) \rangle}{\langle(x,x+t(y-x),\xi)\rangle} \Big)^{\rho|2\alpha-\beta+\gamma_2+\epsilon_2+\tau_2|} e^{m\omega(x+t(y-x))} dt.
\end{eqnarray*}

Since $|x-y| < k$, we have $\langle t(x-y) \rangle \leq \sqrt{1+k^2}<1+k<e^{p}$. Also, $\langle(x,x+t(y-x),\xi)\rangle \geq \langle(x,\xi)\rangle$. 
With this, we argue as in the 
first step to see that $\sum_{j=1}^{\infty} Q_j$ belongs to $\mathcal{S}_{\omega}(\R^{2d})$ for $R\ge 1$ big enough.

\vskip\baselineskip

\underline{\em Third step.}  Let $T_N:\mathcal{S}_{\omega} \to \mathcal{S}_{\omega}$ be the operator with kernel $W_N$. Since $A-P = \sum_{N=0}^{\infty} P_N$ converges in $L(\mathcal{S}_{\omega},\mathcal{S}'_{\omega})$, it follows that $(T_N)$ converges to an operator $T:\mathcal{S}_{\omega} \to \mathcal{S}_{\omega}$ in $L(\mathcal{S}_{\omega},\mathcal{S}'_{\omega})$. In fact, we have seen that $\sum_{j=1}^N I_j + \sum_{j=1}^N Q_j$ converges in $\mathcal{S}_{\omega}(\R^{2d})$ as $N \to +\infty$, hence in $\mathcal{S}'_{\omega}(\R^{2d})$. Then, by the kernel's theorem, $\sum_{j=1}^N I_j + \sum_{j=1}^N Q_j$ is the kernel of an operator that converges in $L(\mathcal{S}_{\omega},\mathcal{S}'_{\omega})$ as $N \to \infty$.

We want to show $T=0$ in $L(\mathcal{S}_{\omega},\mathcal{S}'_{\omega})$. To this aim, we fix $N \in \mathbb{N}$, $j_n \leq N+1 < j_{n+1}$ and we set $a_N := R e^{\frac{n}{N+1} \varphi^{\ast}\big(\frac{N+1}{n}\big)}$. We assume $2a_N \leq \langle(x,\xi)\rangle \leq 3a_N$ since otherwise $D^{\beta}_{\xi} \varphi_{N+1}(x,\xi)$ vanishes for all $\beta\neq 0$. For $f \in \mathcal{S}_{\omega}$, we have
\begin{eqnarray*}
\lefteqn{\langle T_Nu, f \rangle= \int T_Nu(x) f(x) dx = \int \Big( \int W_N(x,y) u(y) dy \Big) f(x) dx}\\
&&= \int \Big( \int \sum_{|\alpha|=1}^N \sum_{0 \neq \beta \leq \alpha} \frac1{\beta! (\alpha-\beta)!} \Big( \int e^{i(x-y)\xi} D^{\beta}_{\xi} \varphi_{N+1}(x,\xi) D^{\alpha-\beta}_{\xi} \partial^{\alpha}_y a(x,x,\xi) d\xi \Big) u(y) dy \Big) f(x) dx \\
&&= \sum_{|\alpha|=1}^N \sum_{0 \neq \beta \leq \alpha} \frac1{\beta! (\alpha-\beta)!} \int \Big( \int e^{ix\xi} D^{\beta}_{\xi} \varphi_{N+1}(x,\xi) D^{\alpha-\beta}_{\xi} \partial^{\alpha}_y a(x,x,\xi) \widehat{u}(\xi) d\xi \Big) f(x) dx.
\end{eqnarray*} 
By definition of amplitude and $\varphi_j$, for all $n\in\N$ there are $C_{n},D_{n}>0$ with 
\begin{eqnarray*}
\big| D^{\beta}_{\xi} \varphi_{N+1}(x,\xi) \big| &\leq& D_n \langle(x,\xi)\rangle^{-\rho|\beta|} e^{n\rho \varphi^{\ast}\big(\frac{|\beta|}{n}\big)} \\
\big| D^{\alpha-\beta}_{\xi} D^{\alpha}_y a(x,x,\xi) \big| &\leq& C_{2n} \langle(x,\xi)\rangle^{-\rho|2\alpha-\beta|} e^{2n\rho \varphi^{\ast}\big(\frac{|2\alpha-\beta|}{2n}\big)} e^{2m\omega(x)} e^{m\omega(\xi)}.
\end{eqnarray*} Since $u$ and $f$ belong to $\mathcal{S}_{\omega}$, by Definition~\ref{def3}, there exist $C_1,C_2>0$ (that only depend on $m$) such that 
\begin{eqnarray*}
\big| \widehat{u}(\xi) \big| \leq C_1 e^{-(m+1)\omega(\xi)} \quad\mbox{and}\quad
|f(x)| \leq C_2 e^{-(2m+1)\omega(x)}.
\end{eqnarray*} 
We observe that, by the convexity of $\varphi^{*}$, $e^{n\rho \varphi^{\ast}\big(\frac{|\beta|}{n}\big)} e^{2n \rho \varphi^{\ast}\big(\frac{|2\alpha-\beta|}{2n}\big)} \leq e^{2n \rho \varphi^{\ast}\big(\frac{|\alpha|}{n}\big)}.$ On the other hand, since $\varphi^{*}(x)/x$ is increasing,
\[ \langle(x,\xi)\rangle^{-\rho|\alpha|} \leq (2R)^{-\rho|\alpha|} e^{-\rho|\alpha|\frac{n}{N+1}\varphi^{\ast}\big(\frac{N+1}{n}\big)} \leq (2R)^{-\rho|\alpha|} e^{-\rho n \varphi^{\ast}\big(\frac{|\alpha|}{n}\big)}. \] These estimates give
\begin{eqnarray}\nonumber
\big| \langle T_Nu, f \rangle \big| &\le& \sum_{|\alpha|=1}^N \sum_{0 \neq \beta \leq \alpha} \frac1{\beta! (\alpha-\beta)!} \iint_{\langle(x,\xi)\rangle \geq 2a_N} C_{2n} D_n C_1 C_2 e^{2n\rho \varphi^{\ast}\big(\frac{|\alpha|}{n}\big)} (2R)^{-2\rho|\alpha|}  \times \\ \nonumber
&&\ \ \ \ \  \times e^{-2\rho n \varphi^{\ast}\big(\frac{|\alpha|}{n}\big)} e^{2m\omega(x)} e^{m\omega(\xi)} e^{-(2m+1)\omega(x)} e^{-(m+1)\omega(\xi)} d\xi dx \\ \label{Eq1}
&\le&  \sum_{|\alpha|=1}^N \sum_{0 \neq \beta \leq \alpha} \frac{(2R)^{-2\rho|\alpha|} E_{n}}{\beta! (\alpha-\beta)!}  \Big( \int_{\R^{d}\times \{|\xi| \geq 2a_N\}}+\int_{\{|x|\ge 2a_N\}\times \R^{d}}\Big) e^{-\omega(x)-\omega(\xi)} dx  d\xi.
\end{eqnarray} 
Here, $E_{n}=C_{1} C_{2} C_{2n} D_{n}$. Now, we consider in \eqref{Eq1} only the integral on $\R^{d}\times \{|\xi| \geq 2a_N\}$. The argument for the addend with the integral on $\{|x|\ge 2a_N\}\times \R^{d}$ is analogous. By property $(\gamma)$ of the weight function, $\int e^{-\omega(x)} dx$ converges and, moreover, for $N$ big enough, for some constant $C>0$, we also have
\[ \int_{|\xi| \geq 2a_N} e^{-\omega(\xi)} d\xi \leq \frac{C}{(2a_N)^3}. \] 
So, we obtain the estimate
\begin{eqnarray*}
CC_1 C_2 \Big( \int e^{-\omega(x)} dx \Big) \sum_{|\alpha|=1}^N \sum_{0 \neq \beta \leq \alpha} \frac1{\beta! (\alpha-\beta)!} \frac1{(2R)^{\rho|\alpha|}} \frac{C_{2n} D_n}{a_N^3}.
\end{eqnarray*} 
Finally, by the selection of $( j_n )$, we have $e^{-\frac{n}{j}\varphi^{\ast}\big(\frac{j}{n}\big)} \leq e^{-n}$ for $j \geq j_n$. This finishes the proof, since $\frac1{a_N} \frac{C_{2n}}{a_N} \frac{D_n}{a_N} \leq e^{-n}$ and \[ \sum_{|\alpha|=1}^N \sum_{0 \neq \beta \leq \alpha} \frac1{\beta! (\alpha-\beta)!} \frac1{(2R)^{\rho|\alpha|}} \leq \sum_{k=1}^N \Big( \frac{d}{(2R)^{\rho}}\Big)^k \] converges when $N\to+\infty$ provided $R\geq1$ be large enough.
\end{proof}

We want to prove that our class of pseudodifferential operators is closed when composing operators and also when we take transpose operators.

\begin{prop}\label{Prop231TesisDavid}
Let $P(x,D)$ be the pseudodifferential operator associated to $p(x,\xi) \in \GS^{m,\omega}_{\rho}$. Then the transpose operator, restricted to $\mathcal{S}_{\omega}$, can be decomposed as $P(x,D)^t = Q(x,D) + \widetilde R$, where $\widetilde R$ is an $\omega$-regularizing operator and $Q(x,D)$ is the operator defined by $q(x,\xi) \sim p^t(x,\xi)$.
\end{prop}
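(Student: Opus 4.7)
The plan is to realize $P(x,D)^{t}|_{\mathcal{S}_{\omega}}$ as the pseudodifferential operator attached to a suitable amplitude and then invoke Theorem~\ref{TheoLarguisimo} to obtain the desired decomposition, checking along the way that the asymptotic expansion of the resulting symbol coincides with $p^{t}$. Concretely, I view $p(x,\xi)\in\GS^{m,\omega}_{\rho}$ as the amplitude $a(x,y,\xi):=p(x,\xi)$ (independent of $y$), which trivially belongs to $\GA^{m',\omega}_{\rho}$ with $m':=\max\{m,0\}$. By the observation made right before Proposition~\ref{PropExampleExtensionPseudodifferentialOperator}, the restriction $P(x,D)^{t}|_{\mathcal{S}_{\omega}}$ equals the pseudodifferential operator associated to
\[
\tilde{a}(x,y,\xi):=a(y,x,-\xi)=p(y,-\xi).
\]

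The main (and essentially only) non-trivial task is to check that $\tilde{a}\in\GA^{m',\omega}_{\rho}$. Since $\tilde{a}$ does not depend on $x$, I only need to control $|D^{\beta}_{y}D^{\gamma}_{\xi}p(y,-\xi)|$. Applying the symbol estimate for $p$ together with $\omega(-\xi)=\omega(\xi)$ yields the factor $\langle(y,\xi)\rangle^{-\rho|\beta+\gamma|}$, which has to be turned into the amplitude form $\langle x-y\rangle^{\rho|\beta+\gamma|}/\langle(x,y,\xi)\rangle^{\rho|\beta+\gamma|}$. For this I use the elementary bound
\[
\langle(x,y,\xi)\rangle\leq\sqrt{3}\,\langle x-y\rangle\,\langle(y,\xi)\rangle,
\]
which follows from $|x|^{2}\leq 2|x-y|^{2}+2|y|^{2}$ after dividing by $1+|y|^{2}+|\xi|^{2}$. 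The spurious constant $(\sqrt{3})^{\rho|\beta+\gamma|}$ is absorbed into $e^{n\rho\varphi^{\ast}(|\beta+\gamma|/n)}$ via Lemma~\ref{lemmafistrella}\,(1), and the factor $e^{m(\omega(y)+\omega(\xi))}$ is dominated by $e^{m'(\omega(x)+\omega(y)+\omega(\xi))}$ because $m'\geq 0$. This gives $\tilde{a}\in\GA^{m',\omega}_{\rho}$.

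Now Theorem~\ref{TheoLarguisimo} applied to $\tilde{a}$ produces a symbol $q(x,\xi)\in\GS^{2m',\omega}_{\rho}$ and an $\omega$-regularizing operator $\widetilde{R}$ with $P(x,D)^{t}u=Q(x,D)u+\widetilde{R}u$ for every $u\in\mathcal{S}_{\omega}$, together with
\[
q(x,\xi)\sim\sum_{j=0}^{\infty}\sum_{|\alpha|=j}\frac{1}{\alpha!}D^{\alpha}_{\xi}\partial^{\alpha}_{y}\tilde{a}(x,y,\xi)\Big|_{y=x}.
\]
To finish, a direct chain-rule computation identifies the right-hand side with $p^{t}$: setting $y=x$ first gives $\partial^{\alpha}_{y}[p(y,-\xi)]_{y=x}=(\partial^{\alpha}_{1}p)(x,-\xi)$, and then applying $D^{\alpha}_{\xi}$ produces $(-i)^{|\alpha|}(-1)^{|\alpha|}(\partial^{\alpha}_{2}\partial^{\alpha}_{1}p)(x,-\xi)$, which is exactly $(1/\alpha!)^{-1}$ times the $j$-th term $(1/\alpha!)\partial^{\alpha}_{\xi}D^{\alpha}_{x}[p(x,-\xi)]$ of $p^{t}$ as defined in Section~\ref{SectionPropertiesFormalSums}. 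The only mildly delicate point in the whole argument is the amplitude check in the second step; everything else is a straightforward specialisation of the machinery already developed.
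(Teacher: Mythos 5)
Your proposal is correct and follows essentially the same route as the paper: the paper's proof is exactly the one-line observation that $P(x,D)^{t}|_{\mathcal S_\omega}$ is the pseudodifferential operator with amplitude $p(y,-\xi)$, followed by an appeal to Theorem~\ref{TheoLarguisimo}. You merely fill in the details the paper leaves implicit (the amplitude check via $\langle(x,y,\xi)\rangle\lesssim\langle x-y\rangle\langle(y,\xi)\rangle$, the harmless passage to $m'=\max\{m,0\}$, and the chain-rule identification of the expansion with $p^{t}$), all of which are sound.
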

\begin{proof}
The transpose operator $P(x,D)^{t}$ is the pseudodifferential operator associated to the amplitude $p(y,-\xi)$. So, the result follows from Theorem~\ref{TheoLarguisimo}.
\end{proof}

The following result is straightforward, so we omit its proof~\cite{Z}.
\begin{lema}\label{Lemma232TesisDavid}
Let $p(x,\xi), q(x,\xi)$ be symbols in $\GS^{m,\omega}_{\rho}$. If $b(x,\xi)$ is a symbol in $\GS^{m,\omega}_{\rho}$ such that $b(x,\xi) \sim q^{t}(x,-\xi)$  and $r(x,\xi) \in \GS^{2m,\omega}_{\rho}$ is equivalent to $ \sum_j \sum_{|\alpha|=j} \frac1{\alpha!} \partial^{\alpha}_{\xi} D^{\alpha}_y (p(x,\xi)b(y,\xi))\left.\right|_{y=x}$, then $r(x,\xi) \sim p(x,\xi) \circ q(x,\xi)$.
\end{lema}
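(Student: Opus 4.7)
The plan is to show that the formal sum
\begin{equation*}
S := \sum_j S_j, \qquad S_j(x,\xi) := \sum_{|\alpha|=j} \tfrac{1}{\alpha!} \partial^{\alpha}_{\xi} D^{\alpha}_y \big(p(x,\xi) b(y,\xi)\big)\big|_{y=x},
\end{equation*}
is equivalent in $\FGS^{2m,\omega}_{\rho}$ to $p\circ q$. By Definition~\ref{DefinCompositionFormalSum}, viewing $p$ and $q$ as formal sums concentrated at degree $0$, the latter is $\sum_j\sum_{|\epsilon|=j}\tfrac{1}{\epsilon!}\partial^{\epsilon}_{\xi}p(x,\xi)D^{\epsilon}_xq(x,\xi)$. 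Combined with the hypothesis $r\sim S$ and the transitivity of $\sim$, this will give $r\sim p\circ q$. The argument rests on Leibniz's rule, the asymptotic expansion of $b$, and a multi-index binomial cancellation.

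First, since $p(x,\xi)$ is independent of $y$, Leibniz's rule yields
\begin{equation*}
\partial^{\alpha}_{\xi} D^{\alpha}_y \big(p(x,\xi) b(y,\xi)\big) = \sum_{\beta\le \alpha}\binom{\alpha}{\beta}\partial^{\beta}_{\xi} p(x,\xi)\cdot \partial^{\alpha-\beta}_{\xi} D^{\alpha}_y b(y,\xi).
\end{equation*}
Next, a direct chain-rule computation applied to $q(x,-\xi)$ shows that the formal sum $q^{t}(x,-\xi)$ has $l$-th term $Q^{*}_{l}(x,\xi):=\sum_{|\gamma|=l}\tfrac{(-1)^{|\gamma|}}{\gamma!}\partial^{\gamma}_{\xi} D^{\gamma}_x q(x,\xi)$, the signs arising from the derivatives of $\xi\mapsto -\xi$ followed by evaluation at $-\xi$. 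The hypothesis $b\sim q^{t}(\cdot,-\cdot)$ together with Definition~\ref{DefEqFormalSums} then allows one, for any truncation level $L$, to replace $b$ by $\sum_{l<L} Q^{*}_{l}$ in the expression above up to a remainder whose derivatives satisfy the estimate~\eqref{EqFormalSumEquivalent}.

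Substituting this expansion, evaluating at $y=x$, and setting $\epsilon:=\alpha+\gamma$, the coefficient of each product $\partial^{\beta}_{\xi} p\cdot \partial^{\epsilon-\beta}_{\xi} D^{\epsilon}_x q$ with $\beta\le \epsilon$ becomes
\begin{equation*}
\frac{1}{\beta!(\epsilon-\beta)!}\sum_{0\le \gamma\le \epsilon-\beta}\binom{\epsilon-\beta}{\gamma}(-1)^{|\gamma|}=\frac{1}{\beta!(\epsilon-\beta)!}\prod_{i=1}^{d}(1-1)^{(\epsilon-\beta)_{i}},
\end{equation*}
which vanishes unless $\epsilon=\beta$ by the multi-index binomial theorem. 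The diagonal survivors contribute exactly $\tfrac{1}{\epsilon!}\partial^{\epsilon}_{\xi} p\cdot D^{\epsilon}_{x} q$, reproducing term-by-term the formal sum $p\circ q$.

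The main obstacle is making the substitution rigorous at the level of Definition~\ref{DefEqFormalSums}. Concretely, one compares the partial sums $\sum_{j<N} S_{j}$ and $\sum_{j<N}(p\circ q)_{j}$: after reindexing, the contributions with $|\epsilon|<N$ cancel exactly thanks to the binomial identity above, while those with $|\epsilon|\ge N$ involve high-order derivatives of $p$ and $q$ and pick up a factor $\langle (x,\xi)\rangle^{-\rho|\epsilon|}$ from the symbol estimates, hence fit inside the right-hand side of~\eqref{EqFormalSumEquivalent} provided the base constant in Definition~\ref{DefFormalSums} is taken large enough and $L$ is chosen to grow with $N$. Combined with the controlled remainder from truncating $b$, this gives $S\sim p\circ q$ in $\FGS^{2m,\omega}_{\rho}$ and thus $r\sim p\circ q$.
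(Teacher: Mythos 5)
The paper offers no proof of this lemma (it is declared straightforward and delegated to the reference [Z]), so there is nothing to compare line by line; judged on its own terms, your argument is the standard one and its algebraic core is correct. The identity $q^{t}_{l}(x,-\xi)=\sum_{|\gamma|=l}\frac{(-1)^{|\gamma|}}{\gamma!}\partial^{\gamma}_{\xi}D^{\gamma}_{x}q(x,\xi)$ is right, the Leibniz expansion of $\partial^{\alpha}_{\xi}D^{\alpha}_{y}\big(p(x,\xi)b(y,\xi)\big)$ is right, and after setting $\epsilon=\alpha+\gamma$ the coefficient of $\partial^{\beta}_{\xi}p\cdot\partial^{\epsilon-\beta}_{\xi}D^{\epsilon}_{x}q$ is indeed $\frac{1}{\beta!(\epsilon-\beta)!}\prod_{i}(1-1)^{(\epsilon-\beta)_{i}}$, so only $\beta=\epsilon$ survives and one recovers $\sum_{\epsilon}\frac{1}{\epsilon!}\partial^{\epsilon}_{\xi}p\,D^{\epsilon}_{x}q$, i.e.\ the formal sum $p\circ q$ of Definition~\ref{DefinCompositionFormalSum}. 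Transitivity of $\sim$ then reduces the lemma to $S\sim p\circ q$, as you say.

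Two points in your final paragraph should be tightened. First, the truncation level: if you replace $b$ by $\sum_{l<L}Q^{\ast}_{l}$ with $L>N$, the remainder estimate supplied by $b\sim q^{t}(\cdot,-\cdot)$ via Definition~\ref{DefEqFormalSums} is only valid on the smaller region $\log\big(\frac{\langle(x,\xi)\rangle}{R}\big)\geq\frac{n}{L}\varphi^{\ast}\big(\frac{L}{n}\big)$, whereas the level-$N$ estimate for $\sum_{j<N}\big(S_{j}-(p\circ q)_{j}\big)$ must hold on the larger region indexed by $N$; so ``$L$ growing with $N$'' should simply be $L=N$, which already captures every $\gamma\leq\epsilon$ with $|\epsilon|<N$ and whose remainder region matches exactly. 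Second, the uncancelled terms with $|\alpha|<N\leq|\alpha|+|\gamma|$ form an infinite series over $\gamma$; their summability and the absorption of the factors $e^{n\rho\varphi^{\ast}(\cdot)}$ and $\frac{1}{\beta!(\epsilon-\beta-\gamma)!\gamma!}$ must come from the spare decay $\langle(x,\xi)\rangle^{-\rho(2|\epsilon|-N)}$ on the constrained region, via Lemma~\ref{lemmafistrella} and Lemma~\ref{LemmaTechnical2}, exactly as in the convergence arguments of Theorem~\ref{TheoEquivalentSymbol}. Neither point invalidates your proof; they are precisely the estimates the paper itself chooses not to write out.
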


\begin{theo}\label{Theo233TesisDavid}
Let $p(x,\xi), q(x,\xi)$ be symbols in $\GS^{m_1,\omega}_{\rho}$, $\GS^{m_2,\omega}_{\rho}$ respectively, and let $P,Q:\mathcal{S}_{\omega} \to \mathcal{S}_{\omega}$ be the corresponding pseudodifferential operators. Then, the composition $P \circ Q: \mathcal{S}_{\omega} \to \mathcal{S}_{\omega}$  coincides, modulo an $\omega$-regularizing operator, with the pseudodifferential operator associated to  $(2\pi)^d (p(x,\xi) \circ q(x,\xi))$.
\end{theo}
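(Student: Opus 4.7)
The plan is to reduce the composition $P\circ Q$ to a single pseudodifferential operator given by an amplitude, and then invoke Theorem~\ref{TheoLarguisimo} together with Lemma~\ref{Lemma232TesisDavid}. The key trick is to realize $Q$, modulo an $\omega$-regularizing operator, as the transpose of an operator given by a symbol, so that $P\circ Q$ has a tractable amplitude.

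By Proposition~\ref{Prop231TesisDavid} applied to $Q$, there exist a pseudodifferential operator $\widetilde{Q}$ with symbol $\widetilde{q}(x,\xi)\in\GS^{m_{2},\omega}_{\rho}$ such that $\widetilde{q}\sim q^{t}$, and an $\omega$-regularizing operator $R_{1}$, with $Q^{t}=\widetilde{Q}+R_{1}$. Taking transposes, $Q=\widetilde{Q}^{t}+R_{1}^{t}$; the operator $\widetilde{Q}^{t}$ has amplitude $\widetilde{q}(y,-\xi)$, and the kernel of $R_{1}^{t}$ is obtained from that of $R_{1}$ by exchanging arguments, hence still belongs to $\Sch_{\omega}(\R^{2d})$, so $R_{1}^{t}$ is $\omega$-regularizing. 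The composition $P\circ R_{1}^{t}$ then extends continuously from $\Sch'_{\omega}$ into $\Sch_{\omega}$ (apply $R_{1}^{t}$ first, then $P$), so by Proposition~\ref{PropExampleExtensionPseudodifferentialOperator} it is $\omega$-regularizing. It remains to analyse $P\circ\widetilde{Q}^{t}$.

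Substituting the oscillatory integral representation of $\widetilde{Q}^{t}$ into the definition of $P$ and performing the $y$-integration produces
\[ (P\circ\widetilde{Q}^{t})f(x)=(2\pi)^{d}\iint_{\R^{2d}} e^{i(x-y)\xi}\,p(x,\xi)\,\widetilde{q}(y,-\xi)\,f(y)\,dy\,d\xi, \qquad f\in\Sch_{\omega}(\R^{d}), \]
so $P\circ\widetilde{Q}^{t}$ is the pseudodifferential operator with amplitude $c(x,y,\xi):=(2\pi)^{d}p(x,\xi)\widetilde{q}(y,-\xi)$; the Peetre-type inequality $\langle(x,y,\xi)\rangle\le\sqrt{3}\langle x-y\rangle\langle(x,\xi)\rangle$ (and its symmetric version in $(y,\xi)$) together with Lemma~\ref{lemmafistrella}\,(2) shows $c\in\GA^{m_{1}+m_{2},\omega}_{\rho}$. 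Theorem~\ref{TheoLarguisimo} applied to $c$ then provides a pseudodifferential operator $S(x,D)$ with symbol $s\in\GS^{m_{1}+m_{2},\omega}_{\rho}$ and an $\omega$-regularizing $R_{2}$, with $P\circ\widetilde{Q}^{t}=S(x,D)+R_{2}$ and
\[ s(x,\xi)\sim\sum_{j=0}^{\infty}\sum_{|\alpha|=j}\frac{1}{\alpha!}D^{\alpha}_{\xi}\partial^{\alpha}_{y}\bigl[(2\pi)^{d}p(x,\xi)\widetilde{q}(y,-\xi)\bigr]\Big|_{y=x}. \]
Setting $b(y,\xi):=\widetilde{q}(y,-\xi)$ and noting that equivalence of formal sums is preserved under $\xi\mapsto-\xi$ (since $\omega$ and $\langle\cdot\rangle$ are radial in $\xi$), we obtain $b(x,\xi)\sim q^{t}(x,-\xi)$, so Lemma~\ref{Lemma232TesisDavid} gives $s\sim(2\pi)^{d}(p\circ q)$. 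Hence $P\circ Q=S(x,D)+(R_{2}+P\circ R_{1}^{t})$ is the desired decomposition.

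The main technical obstacle is the intermediate-integration identity invoked in the third paragraph. Naively substituting the amplitude representation of $\widetilde{Q}^{t}$ produces the four-fold integral $\iiiint e^{i(x-y)\xi+i(y-z)\eta}p(x,\xi)\widetilde{q}(z,-\eta)f(z)\,dy\,d\xi\,dz\,d\eta$, which is not absolutely convergent, so the formal computation $\int e^{iy(\eta-\xi)}dy=(2\pi)^{d}\delta(\eta-\xi)$ must be justified rigorously. The standard remedy is to insert a truncation $\chi(\delta y,\delta\eta)$ as in \eqref{oscilatoria}, apply Fubini to the resulting absolutely convergent integrals, and pass to the limit $\delta\to0^{+}$ using Proposition~\ref{LemmaIteratedIntegralCauchy} and Theorem~\ref{pseudotheo}; equivalently, one can argue at the level of distributional kernels via a Schwartz kernel theorem in this ultradifferentiable setting.
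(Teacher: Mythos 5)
Your proposal is correct and follows essentially the same route as the paper: decompose $Q=Q_{1}+T_{1}$ by transposing the decomposition of $Q^{t}$ from Proposition~\ref{Prop231TesisDavid}, observe that $P\circ T_{1}$ is $\omega$-regularizing, identify $P\circ Q_{1}$ as the operator with amplitude $(2\pi)^{d}p(x,\xi)\,q'(y,-\xi)$, and conclude via Theorem~\ref{TheoLarguisimo} and Lemma~\ref{Lemma232TesisDavid}. Your added remarks on verifying the amplitude estimates and on justifying the intermediate $y$-integration only make explicit steps the paper leaves as ``easy to see.''
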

\begin{proof}
We already know that $Q^t$ is given by the amplitude $q(y,-\xi)$. Then, $Q^t = Q' + T'$, where $T'$ is $\omega$-regularizing, and $Q'$ is defined by a symbol $q'$ that is equivalent to $q^t$. Since the class of the $\omega$-regularizing operators is closed by taking transposes, and by the fact that $(Q^t)^t=Q$, we observe $Q = Q_1 + T_1$, where $T_1$ is $\omega$-regularizing, and $Q_1$ is the operator associated to $b(y,\xi):=q'(y,-\xi) \sim q^t(y,-\xi)$. Moreover,  $P \circ T_1$ is an $\omega$-regularizing operator.

We consider the composition $P \circ Q_1: \mathcal{S}_{\omega} \to \mathcal{S}_{\omega}$ given by $P(Q_1 f)(x) = \int p(x,\xi) \widehat{Q_1f}(\xi) e^{ix\xi} d\xi.$ 
It is easy to see that  $Q_1f(x) = \widehat{I}(-x)$, where $I(\xi):= \int b(y,\xi) f(y) e^{-iy\xi} dy$. Thus, $\widehat{Q_1f}(\xi) = (2\pi)^d I(\xi)$, and hence $P\circ Q_1$ is a pseudodifferential operator associated to $(2\pi)^d p(x,\xi) b(y,\xi)$. Theorem~\ref{TheoLarguisimo} and  Lemma~\ref{Lemma232TesisDavid} give the conclusion.
\end{proof}

\textbf{Acknowledgements.} The first author was partially supported by the project GV Prometeo 2017/102, and the second author by the project MTM2016-76647-P. This article is part of the PhD. Thesis of V.~Asensio. The authors are very grateful to the two referees for the careful reading and  their suggestions and comments, which improved the paper.

\end{document}